\newtheorem{thm}{Theorem}
\newtheorem*{theo}{Theorem}
\newtheorem{cor}[thm]{Corollary}
\newtheorem{prop}[thm]{Proposition}
\newtheorem{lem}[thm]{Lemma}
\newtheorem{remark}[thm]{Remark}
\newtheorem{example}[thm]{Example}
\newcommand{\OM}[1]{\Omega^1_{#1}} 
\newcommand{\OO}[1]{\mathcal O_{#1}}
\DeclareMathOperator{\elm}{elm}
\DeclareMathOperator{\tr}{tr}
\def\Z{\mathbb Z}
\def\C{\mathbb C}
\def\P{\mathbb P}
\begin{document}

\title[Lam\'e connections and Okamoto symetry of $P_{VI}$]{Isomonodromic deformation of Lam\'e connections, 
Painlev\'e VI equation and Okamoto symetry}
\author[Frank Loray]{Frank LORAY}
\address{Frank LORAY (Directeur de Recherches au CNRS)
\hfill\break IRMAR, Campus de Beaulieu, 35042 Rennes Cedex (France)}
\email{frank.loray@univ-rennes1.fr}
\date{\today}
\dedicatory{In the memory of Andrei Bolibrukh}

\begin{abstract} A Lam\'e connection is a logarithmic $\mathrm{sl}(2,\C)$-connection $(E,\nabla)$ over an elliptic curve 
$X:\{y^2=x(x-1)(x-t)\}$, $t\not=0,1$, having a single pole at infinity. 
When this connection is irreducible, we show that
it is invariant by the standart involution and
can be pushed down as a logarithmic $\mathrm{sl}(2,\C)$-connection over $\P^1$
with poles at $0$, $1$, $t$ and $\infty$.
Therefore, the isomonodromic deformation $(E_t,\nabla_t)$
of an irreducible Lam\'e connection, when the elliptic
curve $X_t$ varry in the Legendre family, is parametrized
by a solution $q(t)$ of the Painlev\'e VI differential equation $P_{VI}$. We compute
the variation of the underlying vector bundle $E_t$ along
the deformation via Tu moduli map: it is given by another solution
$\tilde q(t)$ of $P_{VI}$ equation related to $q(t)$ by the Okamoto symetry $s_2 s_1 s_2$ (Noumi-Yamada notation). 
Motivated by the Riemann-Hilbert problem for the classical
Lam\'e equation, the question whether Painlev\'e transcendents 
do have poles is raised.
Those results were partially announced in \cite{Kyoto}.
\end{abstract}

\maketitle
\tableofcontents

\section*{Motivations}
The classical Lam\'e equation (here in Legendre form)
\begin{equation}\label{E:LameEquationLegendre}
\frac{d^2u}{dx^2}+{1\over 2}\left(\frac{1}{x}+\frac{1}{x-1}+\frac{1}{x-t}\right)\frac{du}{dx}+\frac{c-n(n+1)x}{4x(x-1)(x-t)}u=0,
\end{equation}
$t,n,c\in\C$, $t\not=0,1$, 
defines a projective structure on the elliptic curve 
$$X:\{y^2=x(x-1)(x-t)\},\ \ \ t\in\C-\{0,1\}$$
having a fuchsian singularity at the point $\omega_\infty$ at infinity.
On the affine part of the curve, projective charts
are local determinations of 
$$\phi:X-\{\omega_\infty\}\to\P^1\ ;\ (x,y)\mapsto \frac{u_1(x)}{u_2(x)}$$
where $u_1$ and $u_2$ range over independant solutions of 
(\ref{E:LameEquationLegendre}). Setting $\vartheta:=2n$,
there is a local coordinate $z$ at $\omega_\infty$ such that
one of the projective charts around this point is given by
\begin{itemize}
\item $\phi=z^\vartheta$ when $\vartheta\not\in\Z$,
\item $\phi=z^m$ or $\phi=\frac{1}{z^m}+\log(z)$ 
when $\vartheta=\pm m$, $m\in\Z_{>0}$, 
\item $\phi=z$ (regular) when $\vartheta=0$.
\end{itemize}
The monodromy of any projective chart $\phi=\frac{u_1}{u_2}$ after analytic continuation
along any loop $\gamma$ is given by $\rho(\gamma)\circ \phi$
where 
\begin{equation}\label{EqMonRep}
\rho:\pi_1(X-\{\omega_\infty\})\to\mathrm{PGL}(2,\C)
\end{equation}
is the projective monodromy representation of (\ref{E:LameEquationLegendre}) computed in the basis $(u_1,u_2)$.
The following natural question goes back to Poincar\'e (see \cite{Poincare}):
{\it which topological representation
\begin{equation}\label{EqTopRep}
\rho:\pi_1(\text{once punctured torus})\to\mathrm{PGL}(2,\C)
\end{equation}
occurs as the monodromy of a fuchsian projective structure ?}

Any fuchsian projective structure on the once punctured torus
(i.e. having moderate growth at the puncture) is of the form above:
the parameter $t$ stands for the underlying complex structure,
$n$ (or $\vartheta=2n+1$) for the fuchsian type of the puncture
and $c$ is the accessory parameter.
The number of parameters 
fits with the dimension of the space of those
topological representations up to conjugacy (see \cite{Goldman}).
One thus expects that a generic representation should
be the monodromy of some Lam\'e equation.
The similar question for regular projective structures 
on complete curves have been answered only recently 
in \cite{GalloKapovichMarden}
by pants decomposition and gluing methods. 
Our initial aim, originating the present work, was to 
use isomonodromy method in order to answer the
Lam\'e case as a test; as we shall see, we actually reduce
the above question to the existence of poles for Painlev\'e VI
transcendents, which looks difficult, 
of a different nature though.

The Lam\'e equation may be viewed as a logarithmic $\mathrm{sl}(2,\C)$-connection 
on the trivial vector bundle $\mathcal O\oplus\mathcal O$
over the elliptic curve $X$, having a single pole at $\omega_\infty$:
any eigenvector of the residual matrix connection at $\omega_\infty$
provides a cyclic vector going back to the scalar elliptic form.
On the other hand, the Riemann-Hilbert correspondence asserts
that any representation 
$$\rho:\pi_1(X-\{\omega_\infty\})\to\mathrm{SL}(2,\C)$$
is the monodromy of a logarithmic $\mathrm{sl}(2,\C)$-connection $\nabla$
on some rank $2$ vector bundle $E$ over $X$, 
having a single pole at $\omega_\infty$.
Our initial question becomes: {\it for a given topological representation (\ref{EqTopRep}),
can we choose the complex structure $X$ in such a way that
the realizing connection $(E,\nabla)$ is defined on the trivial bundle ?}
Now, the question perfectly fits into the setting of isomonodromic deformations.

Starting with a ``Lam\'e connection'' $(E_0,\nabla_0)$ on $X_{t_0}$, consider its isomonodromic deformation
$(E_t,\nabla_t)$ when the complex structure of the curve $X_t$
varry; here, the parameter $t$ of the deformation has to be considered
in the universal cover $T\simeq\mathbb H\to\P^1-\{0,1,\infty\}$,
the Teichm\"uller space of the once punctured torus. 
When the pole of the Lam\'e connection is not apparent, this 
unique deformation is characterized by the fact that its monodromy
representation is locally constant. Equivalently, the deformation 
is induced by the
unique logarithmic integrable connection on the
universal curve $\boldsymbol X\to T$ having a single pole
along the section $t\mapsto\omega_\infty\in X_t$ 
whose restriction to the initial fiber $X_{t_0}$ 
coincides with $(E_0,\nabla_0)$.
Our initial question becomes: {\it for which connection/representation
can we insure that the underlying vector bundle $E_t$ becomes
trivial for some convenient parameter $t$ along the isomonodromic
deformation of a Lam\'e connection ?} 

In this paper, we will compute the variation of the vector bundle $E_t$
along the isomonodromic deformation of a Lam\'e connection and
show that it is given by a solution of the Painlev\'e VI equation 
for some convenient parameter; the bundle $E_t$ becomes then
trivial only when the Painlev\'e transcendent has a pole (outside $0$, 
$1$ and $\infty$). We are finally
led to the following question which seems widely open: 
{\it do Painlev\'e transcendents have poles ?} 

Actually, much more interesting is the similar question for regular connections
over genus $g>1$ curves. Following \cite{Gunning}, regular projective structures
correspond there to regular connections on the maximally unstable
undecomposable $\mathrm{SL}(2,\C)$-bundle 
(a $\mathrm{sl}(2,\C)$-oper in the sense of 
\cite{BeilinsonDrinfeld}, \S 2.7). The main result
of \cite{GalloKapovichMarden} can be rephrased as follows:
{\it the special bundle above occur along the isomonodromic
deformation if, and only if, the monodromy is neither reducible, 
nor in $\mathrm{SU}(2,\C)$ (up to $\mathrm{SL}(2,\C)$-conjugacy)}. 
Can we prove this directly by computing the variation of the bundle ? 

Another interseting question when $g>1$ is whether a given topological representation 
$\pi_1(X^{top})\to\mathrm{SL}(2,\C)$ can be realized as the monodromy of a connection on the trivial bundle,
for a convenient choice of complex structure $X$ ?
In the case the image $\Gamma$ is a discrete subgroup, this provides an embedding
$X\to \mathrm{SL}(2,\C)/\Gamma$: the fundamental matrix of the associate system
defines an equivariant map $\tilde X\to \mathrm{SL}(2,\C)$ (where $\tilde X\to X$ the universal cover).
The existence of compact curves in quotients of $\mathrm{SL}(2,\C)$ is still an open problem.
The isomonodromic approach gather into the same geometrical framework questions arising from various contexts.

\section{Result}

Isomonodromic deformations of meromorphic connections
have been extensively studied over the Riemann sphere
(see \cite{Malgrange,Palmer}). In this situation, the underlying
vector bundle is constant on a Zariski open subset of the parameter
space (see \cite{Bolibrukh}) allowing an explicit computation 
of the isomonodromy condition, namely the Schlesinger equations;
Painlev\'e equations arise after further reduction in the simplest
case: rank $2$ with $4$ poles. In order to observe
some continuous deformation of the underlying bundle,
one has to switch to connections over genus $g>0$ curves.
The simplest non trivial case, namely regular rank $1$ connections
over an elliptic curve, has been considered in \cite{Hitchin,Lame}:
there, it was observed that the variation of the underlying line bundle
along an isomonodromic deformation is a Painlev\'e transcendent.
In this paper, we study the next case in difficulty, 
namely rank $2$ logarithmic connections over elliptic curves with a single pole.

Throughout this work, a {\bf Lam\'e connection} consists 
in the data $(E,\nabla)$ of a rank $2$ locally trivial holomorphic 
vector bundle $E$
over an elliptic curve belonging to the Legendre family:
\begin{equation}\label{Eq:Legendre}
X:\{y^2=x(x-1)(x-t)\},\ \ \ t\in\C-\{0,1\}
\end{equation}
equipped with a logarithmic and trace free connection $\nabla$
having (at most) a single pole at the point $\omega_\infty$ at infinity:
$$\nabla:E\to E\otimes\OM{X}([\omega_\infty]),\ \ \ 
\det(E)=\OO{X},\ \ \ \mathrm{trace}(\nabla)=d:\OO{X}\to\OM{X}$$
(here, we identify vector bundles $E$ and $\Omega$ with the 
corresponding sheaves of holomorphic sections). 
Up to now, such a connection $(E,\nabla)$ will be considered 
up to holomorphic bundle isomorphism. The {\bf exponent} $\vartheta\in\C$ of $(E,\nabla)$,
defined up to a sign, is the difference between the 
eigenvalues $\pm\frac{\vartheta}{2}$ of the residual matrix 
connection at $\omega_\infty$.

The underlying vector bundle of a Lam\'e connection has trivial
determinant because of the trace free condition.
Following Atiyah \cite{Atiyah}, almost all rank $2$ vector bundles on $X$ 
with trivial determinant are decomposable, i.e. of the form 
\begin{equation}\label{E:DecomposableBundle}
E=L\oplus L^{-1}\ \ \ \ \text{with}\ \ \ L\in\text{Pic}(X);
\end{equation}
to complete the list, one has to add $4$ extra bundles $E_i$, $i=0,1,t,\infty$. 
Semistable bundles are those decomposable ones with $L\in\text{Pic}^0(X)$, i.e. of the form
\begin{equation}\label{E:SemistableBundle}
E=L\oplus L^{-1}\ \ \ \ \text{with}\ \ \ L=\OO{X}([\omega]-[\omega_\infty]),\ \omega=(x,y)\in X,
\end{equation}
together with the $4$ undecomposable ones above. The corresponding
moduli space is $\P^1$ (see \cite{Tu}) with quotient map
given by
\begin{equation}\label{E:ModuliSemistableBundle}
\lambda\ :\  \left\{\begin{matrix}E\mapsto x&\text{under notation of (\ref{E:SemistableBundle})}\\ E_i\mapsto i&i=0,1,t,\infty \end{matrix}\right.
\end{equation}
If we denote by $\omega_i=(i,0)\in X$ the 
$2$-torsion points, $i=0,1,t,\infty$, then $E_i$ is in fact the unique non trivial extension
\begin{equation}\label{E:UndecomposableBundles}
0\to L_i\to E_i\to L_i\to 0,\ \ \ \text{with}\ 
L_i=\mathcal O([\omega_i]-[\omega_\infty]);
\end{equation}
the moduli map $\lambda$ identifies $E_i$ with the trivial extension $L_i\oplus L_i$. In particular, the point $\lambda=\infty$
corresponds to both the trivial vector bundle and $E_\infty$.

The {\bf isomonodromic deformation} of a Lam\'e connection is defined 
as follows. Consider the universal cover
$T\to\P^1-\{0,1,\infty\}$, namely the Teichm\"uller
space of the punctured torus, and the universal Legendre curve
$\boldsymbol X\to T$ over this parameter space: the fiber $X_t$ at some point $t$ 
is precisely the curve $\{y^2=x(x-1)(x-t)\}$ (by abuse of notation, we denote by $t$
a point of $T$ and its projection on $\P^1$).
The point $\omega_\infty$ at infinity defines a section $T\to\boldsymbol X$ of this fibration.
Starting from a Lam\'e connection $(E_0,\nabla_0)$ 
over $X_{t_0}$, there is a unique logarithmic flat
connection $(E,\nabla)$ over the total space $\boldsymbol X$,
having the section $\omega_\infty$ as polar set
and inducing the initial connection $(E_0,\nabla_0)$
on $X_{t_0}$ (see \cite{Deligne,Malgrange,Heu}). 
The deformation $t\mapsto(E_t,\nabla_t)$
induced over the family $t\mapsto X_t$ is the isomonodromic
deformation of $(E_0,\nabla_0)$. When the pole 
of $\nabla_0$ is not an apparent singular point 
(i.e. having local monodromy $\pm I$), then $t\mapsto(E_t,\nabla_t)$
is precisely the unique deformation having constant monodromy representation.
The exponent $\vartheta$ of the Lam\'e connection is constant along such a deformation. Finally, one can talk about the variation of 
the underlying vector bundle $E_t$ along the deformation 
just by considering the moduli map $t\mapsto \lambda(E_t)$ defined above.
Our result is

\begin{thm}\label{T:Main} Let $(E_t,\nabla_t)$ be the isomonodromic 
deformation of an irreducible Lam\'e connection.
Then, for a Zariski open subset of the parameter space $T$, 
the underlying vector bundle 
$E_t$ is semistable (see \cite{Heu}) and its Tu invariant $t\mapsto\lambda(E_t)\in\P^1$
defined by (\ref{E:ModuliSemistableBundle}) is solution of 
the Painlev\'e VI differential equation
\begin{equation}\label{E:PainleveEquation}
\begin{matrix}{ \frac{d^2 \lambda}{dt^2}=\frac{1}{2}\left(\frac{1}{\lambda}+\frac{1}{\lambda-1}+\frac{1}{\lambda-t}\right)
\left(\frac{d\lambda}{dt}\right)^2
-\left(\frac{1}{t}+\frac{1}{t-1}+\frac{1}{\lambda-t}\right)
\left(\frac{d\lambda}{dt}\right)}\\
\small{+\frac{\lambda(\lambda-1)(\lambda-t)}{t^2(t-1)^2}
\left(\frac{\kappa_\infty^2}{2}-\frac{\kappa_0^2}{2}\frac{t}{\lambda^2}
+\frac{\kappa_1^2}{2}\frac{t-1}{(\lambda-1)^2}+\frac{1-\kappa_t^2}{2}\frac{t(t-1)}{(\lambda-t)^2}\right)}.
\end{matrix}
\end{equation}
with parameter $(\kappa_0,\kappa_1,\kappa_t,\kappa_\infty)
=(\frac{\vartheta}{4},\frac{\vartheta}{4},\frac{\vartheta}{4},\frac{\vartheta}{4})$ where $\vartheta$ is the exponent of the Lam\'e connection. 
\end{thm}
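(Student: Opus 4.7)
The plan is to exploit the pushdown construction announced in the abstract and established earlier in the paper: any irreducible Lam\'e connection $(E,\nabla)$ is invariant under the hyperelliptic involution $\iota:(x,y)\mapsto (x,-y)$, and a lift of $\iota$ to $E$ (unique up to sign by Schur's lemma applied to the irreducible $\nabla$) realizes the descent to a logarithmic $\mathrm{sl}(2,\C)$-connection $(\tilde E,\tilde\nabla)$ on $\P^1$ with four logarithmic poles at $0,1,t,\infty$. I would first observe that descent commutes with isomonodromic deformation: the universal logarithmic flat connection on $\boldsymbol X\to T$ is invariant under the relative involution, hence descends to a flat logarithmic connection on $\P^1\times T\to T$ whose fibers recover $t\mapsto(\tilde E_t,\tilde\nabla_t)$. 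This reduces the problem to isomonodromic deformation of a rank $2$ Fuchsian system with four regular singular points on $\P^1$, where the classical Jimbo--Miwa--Schlesinger theory attaches a Painlev\'e VI transcendent $q(t)$ with parameters $(\kappa'_0,\kappa'_1,\kappa'_t,\kappa'_\infty)$ read off from the exponents of $\tilde\nabla$.

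Those exponents are computed as follows. At each branch point $\omega_i=(i,0)$, $i=0,1,t$, the connection $\nabla$ is regular and the local monodromy of $\tilde\nabla$ around $i\in\P^1$ is conjugate to $\hat\iota|_{E_{\omega_i}}$, an order-$2$ involution with eigenvalues $(+1,-1)$; hence the exponent difference is $\tfrac12$, i.e.\ $\kappa'_i=\tfrac12$. At $\infty$, both the Lam\'e pole (exponents $\pm\vartheta/2$) and the ramification contribute: the local monodromy of $\tilde\nabla$ is $\hat\iota$ composed with a square root of the Lam\'e monodromy, producing an exponent difference that depends explicitly on $\vartheta$ (and must be corrected by the rank-$1$ twist needed to trivialize $\det\tilde E$). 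The resulting $\kappa'_\infty$ is generically \emph{not} $\vartheta/4$, which is where the Okamoto symmetry must intervene.

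Next I would translate the Tu invariant $\lambda(E_t)$ into a geometric datum on $\P^1$. Generically $E_t=L\oplus L^{-1}$ with $L=\OO{X_t}([\omega]-[\omega_\infty])$, so $\lambda(E_t)=x(\omega)$; since $\iota^*L\simeq L^{-1}$ away from $2$-torsion, $\iota$ swaps the two summands, and the $\hat\iota$-equivariant structure on $E_t$ induces on $\tilde E_t$ a canonical parabolic-type marking concentrated at the single point $\pi(\omega)=x(\omega)\in\P^1$. Thus $\lambda(E_t)$ is the position of this ``elliptic'' marking on $\P^1$. The Jimbo--Miwa coordinate $q(t)$ of the pushdown encodes a \emph{different} marking (the apparent singularity of the associated scalar Fuchsian equation, or equivalently the unique destabilizing line of the underlying parabolic bundle). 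Both $q$ and $\lambda$ are Darboux coordinates on the same moduli space of parabolic connections on $\P^1\setminus\{0,1,t,\infty\}$, and one must show that the birational change of coordinates $q\leftrightarrow\lambda$ is implemented by the Noumi--Yamada reflection $s_2s_1s_2\in W(D_4^{(1)})$. Any Okamoto symmetry sends PVI solutions to PVI solutions with affinely-transformed parameters, so once this identification is made $\lambda(E_t)$ automatically satisfies (\ref{E:PainleveEquation}), the parameter vector $s_2s_1s_2\cdot(\kappa'_0,\kappa'_1,\kappa'_t,\kappa'_\infty)$ being checked to equal $(\tfrac{\vartheta}{4},\tfrac{\vartheta}{4},\tfrac{\vartheta}{4},\tfrac{\vartheta}{4})$ by a combinatorial computation in $W(D_4^{(1)})$.

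The main obstacle is precisely this identification of the change of coordinates $q\leftrightarrow\lambda$ with the specific Okamoto element $s_2s_1s_2$. My route would be to realize $s_1$ and $s_2$ as explicit elementary (Hecke) transformations of rank-$2$ parabolic bundles on $\P^1$ and then verify that the composition $s_2s_1s_2$ is exactly the one that carries the ``apparent singularity'' marking of the pushdown to the ``Tu moduli'' marking inherited from the elliptic side. The remaining ingredients---flatness of the pushdown in family, the exponent computations, semistability of $E_t$ on a Zariski open subset of $T$ (the content of Heu's theorem cited in the statement), and the explicit PVI form with the symmetric parameters $(\tfrac{\vartheta}{4},\tfrac{\vartheta}{4},\tfrac{\vartheta}{4},\tfrac{\vartheta}{4})$---are either standard Jimbo--Miwa--Schlesinger or reduce to direct local calculation once the Bäcklund identification is in place.
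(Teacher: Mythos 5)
Your overall architecture matches the paper's: descend the Lam\'e connection to a logarithmic connection on $\P^1$ with poles at $0,1,t,\infty$ (the paper does this via the monodromy: a $\sigma$-invariant irreducible representation $(A,B)$ lifts a quadruple $(M_0,M_1,M_t,M_\infty)$ with $M_i^2=-I$, and Riemann--Hilbert then produces the connection downstairs, Corollary \ref{Cor:IrredLameArePullBack}), obtain a Painlev\'e VI transcendent $q(t)$ with $\boldsymbol\kappa=(\tfrac12,\tfrac12,\tfrac12,\tfrac{\vartheta-1}{2})$, and identify $\lambda(E_t)$ with the image of $q(t)$ under the Okamoto symmetry. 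The parameter bookkeeping you defer to $W(D_4^{(1)})$ does check out. But the decisive step --- proving that the Tu invariant actually equals $q+\frac{\rho+\kappa_\infty}{p}$ --- is where your plan breaks down. You propose to realize $s_1$ and $s_2$ as elementary (Hecke) transformations of rank-$2$ parabolic bundles on $\P^1$ and verify that $s_2s_1s_2$ carries the apparent-singularity marking to the Tu marking. This cannot work: $s_2$ is precisely the ``strange extra symmetry'' that lies \emph{outside} the group generated by elementary transformations, sign changes and pole permutations on rank-$2$ connections over $\P^1$ (the paper states explicitly in Section \ref{sec:OkaSym} that $s_2$ has no geometric interpretation at the level of rank-$2$ connections or Riccati foliations, and that one must pass to rank $\ge 3$ to realize it by natural operations on the connection). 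The entire point of the theorem is that the elliptic pull-back supplies the missing geometric realization; assuming such a realization already exists downstairs is circular.

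What the paper does instead, and what your proof is missing, is a direct computation of the bundle of the elliptic pull-back. Theorem \ref{thm:ComputeBundle} / Proposition \ref{P:ElmTransfRuled} show, via the unique bidegree $(2,2)$ curve on $\P^1\times\P^1$ tangent to the four fibres at the parabolic points $l_i$, that $\lambda(\tilde E'')=t\frac{c-1}{c-t}$ where $c$ is the cross-ratio of the $l_i$; formula (\ref{E:pqcrossratio}) expresses $c$ in terms of $(p,q)$ via the explicit Riccati normal forms of Sections \ref{sec:RiccatiF2}--\ref{sec:RiccatiF0}; combining the two gives $\lambda(E_t)=q+\frac{\rho+\kappa_\infty}{p}$, which is only \emph{then} recognized as $s_2s_1s_2$. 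Your heuristic that $\lambda(E_t)$ is ``the position of an elliptic marking on $\P^1$'' is not a substitute for this computation. A secondary gap: to apply the cross-ratio formula along the deformation you need that the four eigendirections $l_i$ stay pairwise distinct and off a $(1,1)$-curve outside a discrete set of $t$; this is the Bolibrukh--Heu transversality argument of Section \ref{sec:Viktoria}, which your appeal to Heu's semistability theorem alone does not cover.
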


By ``Zariski open'',
we just mean that exceptional values of $t$ form 
a discrete subset of the parameter space $T$. 
This actually directly follows from \cite{Heu}.

It is already known that isomonodromic deformation 
of (generic) Lam\'e connections 
are parametrized by Painlev\'e VI equation with parameters specified above:
in \cite{Kawai,LevinOlshanetsky}, isomonodromic deformation equations
are directly computed on the elliptic curve, and the elliptic form
of Painlev\'e VI equation (see \cite {Manin}) is recognized.

Our approach of this result is quite different. We first prove, 
using the Riemann-Hilbert correspondence, that any irreducible
Lam\'e connection can be pushed down, via the $2$-fold
cover $X\to\P^1$ as a logarithmic
rank $2$ connection over $\P^1$ with $4$ poles,
namely at the ramification values $0$, $1$, $t$ and $\infty$.
We are back to the classical case of Fuchs:
the isomonodromic deformation is parametrized 
by a solution, say $q(t)$,
of the Painlev\'e VI equation with parameter
$(\kappa_0,\kappa_1,\kappa_t,\kappa_\infty)
=(\frac{1}{2},\frac{1}{2},\frac{1}{2},\frac{\vartheta-1}{2})$.
This already explains why Painlev\'e VI equation arises
in the Lam\'e case; so far, no computation is needed. 
Elementary birational geometry is used to go back
to the initial deformation $(E_t,\nabla_t)$ and 
the moduli $\lambda$ of the vector bundle $E_t$ can be 
computed by means of $q(t)$: we recognize in $\lambda(E_t)$ 
the image of $q(t)$
by the Okamoto symetry $s_1s_2s_1$ (see \cite{NoumiYamada}).
This automatically implies that $\lambda(E_t)$ is also solution 
of the Painlev\'e VI equation, but with new parameter 
$(\frac{\vartheta}{4},\frac{\vartheta}{4},\frac{\vartheta}{4},\frac{\vartheta}{4})$. 

A similar statement holds for the 
classical Painlev\'e VI setting (logarithmic $\mathrm{sl}(2,\C)$-connections over $\P^1$ with $4$ poles),
when considering the parabolic bundle defined by eigendirections of the residual matrix of the connection (see \cite{ArinkinLysenko,LoraySaitoSimpson}).

More generally, we can start with the isomonodromic deformation of a rank $2$ logarithmic connection 
with $4$ poles over $\P^1$, parametrized by 
any Painlev\'e VI transcendent $q(t)$. Then one can lift-up conveniently 
the deformation over the Legendre elliptic curve $X_t$ as a rank $2$
logarithmic connection with poles at the ramification points $\omega_i$
in such a way that the moduli $\lambda(E_t)$ of the vector bundle is the Okamoto symetric $s_1s_2s_1$ of $q(t)$; this provides a new geometric interpretation of this strange symetry. We thus
obtain in a natural way an isomonodromic deformation problem
(a Lax pair) for the general elliptic form of the Painlev\'e VI equation,
just by considering those rank $2$ and trace free logarithmic
connection over $X_t$ having poles at the $2$-order points
$\omega_i$ that moreover commute with the elliptic involution
$(x,y)\mapsto(x,-y)$; this has been also considered in \cite{Zotov}.

When we set $\vartheta=0$, all Lam\'e connections with
vanishing exponent are reducible, but  those regular ones 
can still be pushed down to $\P^1$: our result 
remains valid in this case and we retrieve the

\begin{cor}[\cite{Hitchin,Lame}]\label{Cor:Hitchin} Let $t\mapsto(L_t,\nabla_t)$ be the isomonodromic deformation
of a regular rank $1$ connection
on the Legendre deformation $X_t$ and let 
$E_t=\mathcal O\left([\omega(t)]-[\omega_\infty]\right)$ be the
underlying line bundle, $\omega(t)=(x(t),y(t))\in X_t$. Then $t\mapsto x(t)$
is solution of Painlev\'e VI equation
with parameters $(\kappa_0,\kappa_1,\kappa_t,\kappa_\infty)=(0,0,0,0)$.
\end{cor}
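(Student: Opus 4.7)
The plan is to reduce Corollary~\ref{Cor:Hitchin} to the $\vartheta = 0$ specialization of Theorem~\ref{T:Main} by constructing an auxiliary rank~$2$ Lam\'e connection from the rank~$1$ data. Specifically, I would form
\[
(E_t, \tilde\nabla_t) := \bigl(L_t \oplus L_t^{-1},\ \nabla_t \oplus \nabla_t^\vee\bigr),
\]
a regular, trace-free rank~$2$ connection on $X_t$ with trivial determinant; having no pole it is trivially a (reducible) Lam\'e connection of exponent $\vartheta = 0$. Since $L_t \in \text{Pic}^0(X_t)$, the bundle $E_t$ is semistable, and writing $L_t = \mathcal{O}([\omega(t)] - [\omega_\infty])$ with $\omega(t) = (x(t), y(t))$, the moduli map (\ref{E:ModuliSemistableBundle}) gives $\lambda(E_t) = x(t)$.

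Next, by uniqueness of flat logarithmic extensions over the universal family $\boldsymbol X \to T$, direct-summing the rank~$1$ flat extension of $(L_0, \nabla_0)$ with its dual produces the rank~$2$ flat extension of $(E_0, \tilde\nabla_0)$. Hence the rank~$1$ and rank~$2$ isomonodromic deformations agree, and the identification $\lambda(E_t) = x(t)$ persists throughout the deformation.

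The remainder would mimic the argument of Theorem~\ref{T:Main} in this reducible $\vartheta = 0$ case. The elliptic involution $\iota:(x,y) \mapsto (x,-y)$ exchanges the two summands via the canonical isomorphism $\iota^* L_t \cong L_t^{-1}$, so $(E_t, \tilde\nabla_t)$ is $\iota$-equivariant and descends through the double cover $X_t \to \P^1$ to a logarithmic rank~$2$ connection on $\P^1$ with poles at $\{0, 1, t, \infty\}$. Substituting $\vartheta = 0$ in the exponent analysis of the main theorem yields the classical Fuchs configuration with parameters $(\tfrac12, \tfrac12, \tfrac12, -\tfrac12)$; the corresponding Painlev\'e VI transcendent $q(t)$ is then sent to $\lambda(E_t) = x(t)$ by the Okamoto symmetry $s_1 s_2 s_1$, which identifies $x(t)$ as a solution of PVI with parameters $(0, 0, 0, 0)$.

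The main obstacle is that each step of the theorem's machinery --- the $\iota$-descent, the exponent bookkeeping at the four branch points, and the closed-form Okamoto transformation between the two moduli coordinates --- was set up under the irreducibility hypothesis. One must check that these constructions remain well-defined and extend continuously at the boundary value $\vartheta = 0$, where all Lam\'e connections are reducible, so as to produce genuinely PVI$(0,0,0,0)$ rather than a degenerate limit equation.
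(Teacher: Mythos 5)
Your proposal is correct and follows essentially the same route as the paper, which treats this corollary as the $\vartheta=0$ specialization of the main construction: the regular rank-$2$ Lam\'e connection $L_t\oplus L_t^{-1}$ still descends through the elliptic double cover, the push-down has exponents $(\tfrac12,\tfrac12,\tfrac12,-\tfrac12)$, and the Okamoto symmetry carries $q(t)$ to $\lambda(E_t)=x(t)$ as a $P_{VI}(0,0,0,0)$ solution. The caveat you raise about extending the irreducible machinery to this reducible boundary case is real, but the paper glosses over it in exactly the same way ("our result remains valid in this case"), so your treatment is at the same level of rigor as the original.
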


One can explicitely compute, in this case, the variation of the line bundle $L_t$ by means of elliptic functions and retrieve the

\begin{cor}[Picard \cite{Picard}, see \cite{Mazzocco}]\label{Cor:Picard}
The general solution of Painlev\'e VI equation
with parameters $(\kappa_0,\kappa_1,\kappa_t,\kappa_\infty)=(0,0,0,0)$
is given by
$$t\mapsto x(t)\ \ \ \text{where}\ \ \ (x(t),y(t)):=\pi(c_0\cdot\omega_0+c_1\cdot\omega_1),\ c_0,c_1\in\C$$
where $\pi:\C\to X_t$ is the universal cover and $\omega_i(t)$, half-periods of $X_t$.
\end{cor}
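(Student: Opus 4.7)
The plan is to invoke Corollary~\ref{Cor:Hitchin} and then pin down the Abel-Jacobi class of the isomonodromically evolving line bundle as a $\C$-linear combination of half-periods. By Corollary~\ref{Cor:Hitchin}, every solution $x(t)$ of Painlev\'e VI with parameters $(0,0,0,0)$ arises as the $x$-coordinate of the point $\omega(t)=\pi(\tilde\omega(t))\in X_t$ underlying some isomonodromic deformation $(L_t,\nabla_t)$ of regular rank-$1$ connections on the Legendre family, and conversely each such deformation yields a solution. It therefore suffices to show that the lifts $\tilde\omega(t)\in\C$ produced this way are precisely the $\C$-linear combinations $c_0\,\omega_0(t)+c_1\,\omega_1(t)$, and that all pairs $(c_0,c_1)\in\C^2$ occur.

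I would first make the rank-$1$ Riemann-Hilbert correspondence explicit on $X_t=\C/\Lambda_t$. The flat bundle $(L_t,\nabla_t)$ is determined by its monodromy character $\chi_t:\Lambda_t\to\C^*$, and isomonodromy means exactly that $\chi_t$ is a flat section of the local system $R^1\pi_*\C^*$ over $T$. Using the Weierstrass sigma function $\sigma_t(z-\tilde\omega(t))/\sigma_t(z)$, one realizes $L_t=\OO{X_t}([\pi(\tilde\omega(t))]-[\omega_\infty])$ as the quotient of $\C\times\C$ by the automorphy factor $\lambda\mapsto\exp(-\eta_t(\lambda)\,\tilde\omega(t))$, where $\eta_t$ is the quasi-period of the Weierstrass zeta function. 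The most general flat connection on $L_t$ then has monodromy
\[
\chi_t(\lambda)\;=\;\exp\bigl(c(t)\,\lambda-\eta_t(\lambda)\,\tilde\omega(t)\bigr),\qquad c(t)\in\C,
\]
in which $c(t)$ parametrizes the extra $\C$-torsor of flat connections on a fixed $L_t$.

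Requiring $\chi_t(2\omega_j(t))$ to be constant in $t$ for $j=0,1$ and differentiating yields two relations between $\dot{\tilde\omega}$, $\dot c$, the $\omega_j(t)$, the $\eta_j(t)=\zeta_t(\omega_j(t))$ and their derivatives. Combining these with the Legendre period relation $\eta_0\omega_1-\eta_1\omega_0=\pi i/2$ (and its $t$-derivative) together with the hypergeometric Picard-Fuchs equation satisfied by the half-periods of the Legendre family, one reduces the isomonodromy condition to the statement that $\tilde\omega(t)$ is horizontal for the Gauss-Manin connection. Since $\omega_0(t)$ and $\omega_1(t)$ span the two-dimensional space of such horizontal sections, one concludes $\tilde\omega(t)=c_0\,\omega_0(t)+c_1\,\omega_1(t)$ with $c_0,c_1\in\C$ constant. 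The two complex parameters $(c_0,c_1)\in\C^2$ match the dimension of the generic solution space of Painlev\'e~VI, giving Picard's general solution.

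The main obstacle is the bridge between the isomonodromy condition on $\chi_t$ and Gauss-Manin horizontality of $\tilde\omega(t)$: it is precisely the Legendre period relation, differentiated in $t$, that aligns the quasi-period bookkeeping of $\sigma$ with the Picard-Fuchs equation for the half-periods. Once that identification is nailed down, the remaining steps are formal.
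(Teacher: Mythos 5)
The paper offers no proof of this corollary beyond the one-line remark that it follows from Corollary \ref{Cor:Hitchin} by ``explicitly computing the variation of the line bundle by means of elliptic functions'', and your argument is a correct execution of exactly that indicated route: rank-one Riemann--Hilbert, the $\sigma$-function automorphy factor $\exp(-\eta_t(\lambda)\tilde\omega(t))$, and the Legendre relation. The only soft spot is your passage from the differentiated isomonodromy conditions to ``Gauss--Manin horizontality'', which you assert rather than carry out; note that it can be bypassed entirely by solving the undifferentiated linear system $c(t)\,\omega_j(t)-\eta_j(t)\,\tilde\omega(t)=\mathrm{const}_j$ ($j=0,1$) for $\tilde\omega(t)$ by Cramer's rule, since the Legendre relation makes the determinant the nonzero constant $\pm\pi i/2$ and thus immediately exhibits $\tilde\omega(t)$ as a constant-coefficient linear combination of the half-periods, with no Picard--Fuchs input needed.
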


The Painlev\'e transcendents of Corollary \ref{Cor:Picard}
have poles if, and only if, either $c_0$ or $c_1$ is not real.

\section{Our main construction: elliptic pull-back}\label{sec:MainConstruction}
Here, we construct Lam\'e connections by lifting
on the elliptic two-fold cover $\pi:X\to\P^1$
certain $\mathrm{sl}(2,\C)$-connections 
having logarithmic poles at the critical values of $\pi$.
Later, we will prove that all irreducible Lam\'e
connections can be obtained by this way;
this will be used to parametrize their isomonodromic
deformation by means of Painlev\'e VI solutions in an explicit way.

Let us fix exponents $\boldsymbol{\theta}=(\theta_0,\theta_1,\theta_t,\theta_\infty)$ and consider a logarithmic $\mathrm{sl}(2,\C)$-connection $(E,\nabla)$ over $\P^1$ with poles at $0$, $1$, 
$t$ and $\infty$ and prescribed exponents:
eigenvalues of the residual matrix at $i$ are $\pm\frac{\theta_i}{2}$, 
for $i=0,1,t,\infty$. Such a connection will be called a
{\bf Heun connection}.

The important data that will be used later is
the parabolic structure 
$\boldsymbol{l}=(l_0,l_1,l_t,l_\infty)$ defined 
by the eigenline $l_i\in\P(E\vert_i)$
of the residue of $\nabla$ at $i$ with respect to the eigenvalue 
$-\frac{\theta_i}{2}$, for $i=0,1,t,\infty$. If $\nabla$ do
have pole at each $\omega_i$, the parabolic structure is
perfectly well defined by the connection and the choice of 
exponents $\theta_i$ (they are defined up to a sign).
However, it is important to allow non singular points 
in our construction if we want to fit with the usual Painlev\'e VI 
phase space (see \cite{IIS}):
when $\theta_i=0$ and the corresponding
point is non singular, then any line $l_i\in\P(E\vert_i)$ 
is an eigenline and we have to choose one for our construction.
We call {\bf parabolic Heun connection} with parameter 
$\boldsymbol{\theta}$ the data $(E,\nabla,\boldsymbol{l})$
with above properties.

\begin{example}When $E$ is the trivial bundle, 
then $\nabla$ is defined by a fuchsian system
\begin{equation}\label{E:FuchsianSystem4}
{d Y\over dx}=\left({A_0\over x}+{A_1\over x-1}+{A_t\over x-t}\right)Y,\ \ \ 
A_i\in\text{sl}(2,\C).
\end{equation}
The residual matrix at $x=i$ is given by $A_i$ 
for $i=0,1,t,\infty$ where $A_\infty$ 
is defined by 
\begin{equation}\label{E:ResiduInfini}
A_0+A_1+A_t+A_\infty=0.
\end{equation}
Exponent restrictions are given by $\det(A_i)=-\frac{\theta_i^2}{4}$
and the parabolic structure, by $l_i=\ker(A_i+\frac{\theta_i}{2}I)$.
\end{example}

In order to motivate the following construction, let us indicate
that for special exponents 
$$\boldsymbol{\theta}=(\frac{1}{2},\frac{1}{2},\frac{1}{2},\frac{1}{2}+\frac{\vartheta}{2}),$$
it will provide a Lam\'e connection with exponent $\vartheta$ at infinity.

{\bf Step 1 :} We pull-back the connection $(E,\nabla)$ on the elliptic
cover 
$$\pi:X\to\P^1\ ;\ (x,y)\mapsto x.$$ 
We obtain a logarithmic $\mathrm{sl}(2,\C)$-connection 
on $X$ 
$$(\tilde E,\tilde \nabla):=\pi^*(E,\nabla)$$
with poles at the ramification points 
$\omega_0$, $\omega_1$, $\omega_t$ and $\omega_\infty$
and twice the initial exponents $2\theta_i$. The parabolic structure
$\tilde{\boldsymbol{l}}:=\pi^*\boldsymbol{l}$ corresponds to eigenlines with respect to
eigenvalues $-\theta_i$. 
The point $i$ is parabolic (resp. non singular) for $\nabla$
if, and only if, the point $\omega_i$ is so for $\tilde \nabla$.
We already note that, 
for exponents $\theta_i=\frac{1}{2}$ for $i=0,1,t$, 
the corresponding singular points $\omega_i$
of $(\tilde E,\tilde \nabla)$ are projectively apparent, 
i.e. having $-I$ local monodromy. The next two steps will clear them off.

Remark that we could have choosen an initial connection 
on $\P^1$ with a single pole at $\infty$ so that its 
lifting is of Lam\'e type; but the monodromy would then be 
trivial in this case, and the Lam\'e connection ``very reducible''.

{\bf Step 2 :} We make a convenient birational bundle modification
$$\phi:\tilde E\dashrightarrow\tilde E'$$
such that the new connection 
$$(\tilde E',\tilde \nabla'):=\phi_*(\tilde E,\tilde \nabla)$$
is still logarithmic, having poles at $\omega_i$, with
eigenvalues $\theta_i$ and $1-\theta_i$, for  $i=0,1,t,\infty$. 
This is done by applying successive elementary transformations
$\mathrm{elm}^+$ with respect to the parabolic structure $\tilde{\boldsymbol{l}}$
over each singular point (see section \ref{ss:elm} for the definition and properties of $\mathrm{elm}^+_l$):
$$\phi=\mathrm{elm}^+_{\tilde l_0}\circ\mathrm{elm}^+_{\tilde l_1}\circ\mathrm{elm}^+_{\tilde l_t}\circ\mathrm{elm}^+_{\tilde l_\infty}.
$$
The new connection is no more trace free: we have 
$$\det(\tilde E')=\mathcal O_X([\omega_0]+[\omega_1]+[\omega_t]+[\omega_\infty])\simeq\mathcal O_X(4[\omega_\infty]),$$
and the trace $\tr(\tilde \nabla')$ is the unique logarithmic connection
on $\mathcal O_X(4[\omega_\infty])$ having poles 
at each $\omega_i$ with residue $+1$ and trivial monodromy.
Over the affine chart $X^*$, $\tr(\tilde \nabla')$ is defined in a 
convenient trivialization of the line bundle by
$$d-\left(\frac{dx}{x}+\frac{dx}{x-1}+\frac{dx}{x-t}\right).$$

{\bf Step 3 :} We now twist $(\tilde E,\tilde \nabla)$
by a convenient rank one connection in order to restore the trace-free property. For this, {\bf we choose} the unique square root $(L,\zeta)$
of $(\det(\tilde E'),\tr(\tilde \nabla'))$
defined on the line bundle $L=\mathcal O_X(2[\omega_\infty])$:
$\zeta$ is given over the affine chart by
$$d-\left(\frac{dx}{2x}+\frac{dx}{2(x-1)}+\frac{dx}{2(x-t)}\right).$$
The resulting $\mathrm{sl}(2,\C)$-connection
$$(\tilde E'',\tilde \nabla''):=(\tilde E',\tilde \nabla')\otimes(L,\zeta)^{\otimes(-1)}$$
has exponent $2\theta_i-1$ over $\omega_i$, $i=0,1,t,\infty$.
For special parameters
$$\boldsymbol{\theta}=(\frac{1}{2},\frac{1}{2},\frac{1}{2},\frac{1}{2}+\frac{\vartheta}{2}),$$
$(\tilde E'',\tilde \nabla'')$ is a Lam\'e connection (i.e. having a single
pole at $\infty$) with exponent $\vartheta$ (at $\omega_\infty$); 
as we shall see, all irreducible Lam\'e connections
can be obtained by this way. 
When $\vartheta$ is an odd integer, we note that
$\omega_\infty$ is a parabolic singular point of $\tilde\nabla''$
if, and only if, $\infty$ is parabolic for $\nabla$; when $\vartheta$ 
is even, $\omega_\infty$ is always apparent for the Lam\'e connection $\tilde \nabla''$ (never parabolic).

\section{Computing the vector bundle of an elliptic pull-back}

Under notations of section \ref{sec:MainConstruction},
we would like to determine the vector bundle $\tilde E''$ over
the elliptic curve $X$ in terms of the initial connection $(E,\nabla)$.
In fact, the construction of $\tilde E''$ only depend on $E$ and
the parabolic structure $\boldsymbol{l}=(l_0,l_1,l_t,l_\infty)$.
We restrict our attention to irreducible connections for simplicity,
although the general case could be handled by similar arguments.
The goal of this section is to prove the

\begin{thm}\label{thm:ComputeBundle}
Let $(E,\nabla,\boldsymbol{l})$ be an irreducible parabolic
connection as before and $(\tilde E'',\tilde \nabla'')$ its elliptic pull-back. 
Then $\tilde E''$ is semistable and we are in one of the following cases:
\begin{itemize}
\item $E$ is the trivial bundle and not three lines $l_i$ coincide.\newline
In particular, the cross-ratio 
$$c=\frac{l_t-l_0}{l_1-l_0}\frac{l_1-l_\infty}{l_t-l_\infty}\in\P^1$$
is well defined and we have:
$$\lambda(\tilde E'')=t\frac{c-1}{c-t}.$$
Precisely, $\tilde E''$ is undecomposable if, and only if
\begin{itemize} 
\item either $c=t$ (the diagonal case) 
\item or $c=0,1,\infty$ and only two lines $l_i$ coincide (the other two ones being mutually distinct).
\end{itemize} 
\item $E=\mathcal O(-1)\oplus\mathcal O(1)$ and none of the 
lines $l_i$ coincides with the sub-line-bundle 
$\mathcal O(1)$. Then $\tilde E''$ is the trivial bundle or the undecomposable one $E_0$ depending on the fact that all
$l_i$ lie on a sub-line-bundle $\mathcal O(-1)\hookrightarrow E$
or not.
\end{itemize} 
\end{thm}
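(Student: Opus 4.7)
The plan is to pin down $\tilde E''$ via its sub-line-bundles, since the Tu moduli map classifies rank $2$ trace-free bundles on $X$ by a point $\lambda \in \P^1$. I first show $\tilde E''$ is semistable: if it had a sub-line-bundle $L$ with $\deg L \ge 1$, the $\mathcal{O}_X$-linear map $L \to (\tilde E''/L) \otimes \OM{X}([\omega_\infty])$ induced by $\tilde\nabla''$ would have $\deg L > -\deg L + 1$ on source vs.\ target, hence vanishes, so $L$ is $\tilde\nabla''$-stable. Combined with the $\sigma$-invariance of $L$---forced by uniqueness of the destabilizer together with the $\sigma$-equivariance of the construction (pullback by $\pi$, elementary transforms at the $\sigma$-fixed points $\omega_i$, twist by the $\sigma$-invariant line bundle $\mathcal{O}_X(2[\omega_\infty])$)---descent via $\pi_*$ produces a $\nabla$-stable sub-line-bundle of $E$, contradicting irreducibility.

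\emph{Case 1 ($E = \mathcal{O} \oplus \mathcal{O}$).} Since no three of the $l_i$ coincide, a linear automorphism normalizes $l_0, l_1, l_\infty$ to $0, 1, \infty \in \P^1 = \P(E|_i)$ with $l_t = c$ the cross-ratio. A sub-line-bundle $\mathcal{O}_X([\omega]-[\omega_\infty]) \hookrightarrow \tilde E''$ is equivalent to a nonzero section of $\tilde E'' \otimes \mathcal{O}_X([\omega_\infty]-[\omega])$. Tracing backwards through the twist by $\mathcal{O}_X(-2[\omega_\infty])$ and the four $\mathrm{elm}^+$'s, such a section corresponds to a meromorphic section of $\tilde E = \mathcal{O}_X^2$ whose principal parts at the $\omega_i$ align with $\tilde l_i$ and which vanishes at $\omega=(x,y)$. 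The four alignment conditions yield a linear system whose solution determines $x = t(c-1)/(c-t)$. Degenerate values $c \in \{0,1,t,\infty\}$ correspond to pairs of coincident $l_i$ and to the undecomposable bundles $E_i$; these are identified by ruling out a decomposable splitting via a cohomological computation.

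\emph{Case 2 ($E = \mathcal{O}(-1) \oplus \mathcal{O}(1)$).} Sub-line-bundles $\mathcal{O}(-1) \hookrightarrow E$ correspond to sections $(1, P(x))$ of $E(1) = \mathcal{O} \oplus \mathcal{O}(2)$ with $\deg P \le 2$, a three-parameter family. Requiring such a sub-bundle to contain all four lines $l_i$ is four linear equations on three parameters, hence a single compatibility condition on $(E,\boldsymbol{l})$. When satisfied, the resulting $\mathcal{O}(-1) \subset E$ pulls back to a sub-bundle of $\tilde E$ aligned with every $\tilde l_i$, and this alignment propagates through the construction to a splitting $\tilde E'' \cong \mathcal{O}_X \oplus \mathcal{O}_X$. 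When it fails, no such splitting exists, and semistability together with the $\sigma$-invariance of $\tilde E''$ forces it into the undecomposable extension $E_0$.

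\emph{Main obstacle.} The chief technical difficulty is describing $\mathrm{elm}^+_{\tilde l_i}$ locally when $\tilde l_i$ is in general position relative to any chosen summand: the resulting bundle $\tilde E'$ is then not a direct sum in natural coordinates near $\omega_i$, and bookkeeping global sections requires careful choice of basis and residue tracking. Recognizing the undecomposable $E_i$ in the degenerate subcases of both Case 1 and Case 2 additionally requires a nonzero $\mathrm{Ext}^1$-class computation on $X$ showing that the putative splitting fails.
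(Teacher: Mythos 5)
Your overall strategy (bound the degree of sub-line-bundles, then locate the actual sub-line-bundles of $\tilde E''$ by tracing sections back through the twist and the four elementary transformations) is the right shape, and your semistability argument is a legitimate variant: the paper gets semistability as a byproduct of the explicit case list, whereas you deduce it from Lemma \ref{lem:Brunella} applied on $X$ (giving $\nu=1-2\deg L<0$ for $\deg L\ge 1$, hence $\tilde\nabla''$-invariance) plus $\sigma$-equivariant descent. That part is fine modulo care in descending the invariant subbundle through the $\mathrm{elm}$'s. But the heart of the theorem is the formula $\lambda(\tilde E'')=t\frac{c-1}{c-t}$, and your proposal only asserts it: ``the four alignment conditions yield a linear system whose solution determines $x=t(c-1)/(c-t)$'' is precisely the computation that has to be done, and nothing in your outline produces that value. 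The paper's mechanism is a specific geometric object you never exhibit: the unique bidegree $(2,2)$ curve $C\subset\P^1\times\P^1$ meeting each fibre $x=i$ at $l_i$ with multiplicity $2$ (equation (\ref{Eq:Fcross})); its discriminant $4c(c-1)(c-t)x(x-1)(x-t)$ shows its lift to $X\times\P^1$ splits into two sections crossing exactly at the $\tilde l_i$, which become the two disjoint summands of $\tilde E'$ after the elementary transformations, and $\lambda$ is then read off as the $x$-coordinate of the \emph{second} intersection of $C$ with the horizontal line through $l_\infty$. Without this (or an equivalent explicit computation) the main formula is unproved.

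The degenerate cases are also mishandled. First, $c=t$ is \emph{not} a case of coincident $l_i$: with the normalization $w_0=0,w_1=1,w_\infty=\infty$ it is the case $l_t=(t,t)$, i.e.\ all four distinct points lie on an irreducible $(1,1)$-curve (the diagonal), and it is exactly here that $C$ degenerates to a double curve and $\tilde S'$ becomes the undecomposable $S_0$; your phrase ``degenerate values $c\in\{0,1,t,\infty\}$ correspond to pairs of coincident $l_i$ and to the undecomposable bundles $E_i$'' conflates this with $c\in\{0,1,\infty\}$ and moreover misses that two coincident pairs (e.g.\ $w_0=w_1$ and $w_t=w_\infty$) give the \emph{trivial} bundle, not an undecomposable one. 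Second, in the $\mathbb F_2$ case your closing step ``semistability together with the $\sigma$-invariance of $\tilde E''$ forces it into $E_0$'' is false: every semistable $\P^1$-bundle $\overline{\mathcal O_X([\omega]-[-\omega])}$ is also $\sigma$-invariant (since $\sigma^*\mathcal O_X([\omega]-[-\omega])$ is its inverse), so $\sigma$-invariance excludes nothing. The paper's argument is different and necessary: the exceptional section of $\mathbb F_2$ yields a section of $\tilde S'$ with \emph{trivial} normal bundle, so if $\tilde S'$ were decomposable it would have to be the trivial bundle; triviality is then characterized by exhibiting (via the pencil of sections through the four centers and the fixed points of the elliptic involution on that pencil) a $+2$-section of $\mathbb F_2$ through all the $l_i$, and excluded otherwise by the $-4$ self-intersection contradiction. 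You need some version of each of these steps to close Case 2.
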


For the application we have in mind, one should emphasize
that, along the isomonodromic deformation $(E_t,\nabla_t)$
of such a connection $(E,\nabla)$, the set of parameters $t$ 
for which $E_t$ is not trivial is always a strict (possibly empty) 
analytic subset of the parameter space $T$ (see \cite{Heu}).

Let us start the proof of Theorem \ref{thm:ComputeBundle} 
by justifying restrictions on $E$ and $\boldsymbol{l}$.
This will follow from

\begin{lem}\label{lem:Brunella}
Let $X$ be a curve of genus $g$, $E$ a vector bundle,
$\nabla:E\to E\otimes\Omega(D)$ a logarithmic connection
with reduced effective divisor $D$,  and $L\subset E$
a line bundle which is not $\nabla$-invariant. Then, the integer
$$\nu:=\deg(E)+\deg(D)+2g-2-2\deg(L)\ge0$$
is bounding the number of poles of $\nabla$ where $L$ coincides with an eigenline.
\end{lem}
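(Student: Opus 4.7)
The plan is to exhibit a nonzero $\mathcal O_X$-linear map between line bundles whose zero divisor simultaneously has degree equal to $\nu$ and contains every pole where $L$ coincides with an eigenline.

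First I would introduce the \emph{second fundamental form} of $L \subset E$, namely the composition
$$\phi : L \hookrightarrow E \xrightarrow{\nabla} E \otimes \OM{X}(D) \twoheadrightarrow (E/L) \otimes \OM{X}(D).$$
The Leibniz rule $\nabla(fs) = df\otimes s + f\nabla(s)$ shows that the term $df\otimes s$ dies modulo $L$, so $\phi$ is $\OO{X}$-linear. By hypothesis $L$ is not $\nabla$-invariant, i.e.\ $\nabla(L) \not\subset L \otimes \OM{X}(D)$, so $\phi \neq 0$.

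Next, since $\phi$ is a nonzero map between the line bundles $L$ and $(E/L)\otimes\OM{X}(D)$, its vanishing locus is an effective divisor of degree
$$\deg\bigl((E/L)\otimes\OM{X}(D)\bigr) - \deg(L) = (\deg E - \deg L) + (2g-2) + \deg(D) - \deg(L) = \nu.$$
In particular $\nu \geq 0$.

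It remains to show that for every pole $p \in D$ at which $L\vert_p$ is an eigenline of the residue $A := \mathrm{Res}_p(\nabla)$, the map $\phi$ vanishes at $p$; by additivity of the zero divisor we will then get exactly the claimed bound. This is a local computation: pick a coordinate $z$ centered at $p$ and a local section $s$ of $L$ with $s(p)\neq 0$, extended to $E$. Then $\nabla(s) = A\cdot s \cdot \frac{dz}{z} + (\text{holomorphic section of } E\otimes\OM{X})$. Under the inclusion $\OM{X} \hookrightarrow \OM{X}(D)$, a holomorphic 1-form near $p$ is a section of $\OM{X}(D)$ that vanishes at $p$, so the second summand contributes $0$ to $\phi(s)(p)$. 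The first summand projects to $\overline{A\cdot s(p)}\cdot\frac{dz}{z}$ in $(E/L)\otimes\OM{X}(D)$, and by the eigenline assumption $A\cdot s(p) \in L\vert_p$, so the image in $(E/L)\vert_p$ is zero. Thus $\phi(s)(p)=0$, and $p$ contributes at least $1$ to the zero divisor of $\phi$.

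The main subtlety is purely bookkeeping: one must carefully distinguish $\OM{X}$ from $\OM{X}(D)$ in the local picture to see that the holomorphic part of $\nabla(s)$ really does vanish at the pole when regarded as a section of $\OM{X}(D)$. Once this is clear, both $\mathcal O_X$-linearity of $\phi$ and the local vanishing are transparent, and the degree count is immediate. Note that the inequality is generally strict since $\phi$ may well vanish at points outside $D$ or at poles where $L$ fails to be an eigenline but the holomorphic perturbation happens to land in $L$; the lemma only extracts the contribution coming from eigenline coincidences.
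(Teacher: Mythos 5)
Your proof is correct and follows exactly the paper's argument: both construct the second fundamental form $\phi: L \to (E/L)\otimes\OM{X}(D)$, observe it is a nonzero $\OO{X}$-linear map of line bundles since $L$ is not $\nabla$-invariant, compute that its zero divisor has degree $\nu$, and check locally that every pole where $L$ is an eigenline of the residue lies in that zero divisor. Your local computation at the poles is a correct (and slightly more detailed) version of what the paper states without proof.
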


Here, we mean the eigenline of the residue of $\nabla$.

\begin{proof}The composition 
$$\xymatrix{
    L \ar@{^{(}->}[rr]^{\text{inclusion}} && E \ar[r]^-{\nabla}& E\otimes\Omega(D)
    \ar@{->>}[r]^-{\text{quotient}} &E/L\otimes\Omega(D)=:L'
     }$$
defines an homomorphism of line bundles $L\to L'$,
and thus a section 
$$\phi\in\mathrm{H}^0(X,L'\otimes L^{-1})=
\mathrm{H}^0(X,\det(E)\otimes\Omega(D)\otimes L^{-2}).$$
When $L$ is not $\nabla$-invariant, then $\phi$ is a non trivial section
vanishing precisely at
\begin{itemize}
\item non singular points of $\nabla$ where $L$ is stabilized: $\nabla(L)\subset L\otimes\Omega(D)$,
\item poles of $\nabla$ where $L$ coincides with an eigenline of the residue of $\nabla$.
\end{itemize}
This gives the result.
\end{proof}

In our situation, $g=0$ and $\deg(D)=4$; we deduce that $2-2\deg(L)\ge0$ for any line bundle $L\subset E$ (since $\nabla$ is irreducible, no line bundle $L$ can be $\nabla$-invariant).
Therefore, $E$ is either trivial, or 
$\mathcal O(-1)\oplus\mathcal O(1)$. In the former caser, 
any embedding $\mathcal O\hookrightarrow E$ passes 
through at most two eigenlines; in the later case, 
$L=\mathcal O(1)$ passes through no eigenlines.

\subsection{Projective bundles}
In order to compute the modular invariant $\lambda(\tilde E'')$
for the elliptic pull-back, it is more convenient to work with the
associated projective bundle. If $E$ is a 
vector bundle over a curve $X$, then we denote by $\P(E)$
the associated $\P^1$-bundle. Another vector bundle $E'$
gives rise to the same $\mathbb  P^1$-bundle if, and only if, 
$E'=L\otimes E$ for a line bundle $L$. When $E$ and $E'$ are
both determinant free, then $L$ is a $2$-torsion point of the jacobian:
there are at most $2^{2g}$ determinant free vector bundles $E'$ giving
rise to the same $\mathbb  P^1$-bundle, where $g$ is the genus of $X$.
Line bundles $L\subset E$ are in one-to-one correspondence
with sections $\sigma:X\to\P(E)$.
The total space $S$ of $\P(E)$ fits naturally into a ruled surface
and any section $\sigma$ defines a curve on $S$ that we still denote by $\sigma$ for simplicity;
the normal bundle of $\sigma$ in $S$ identifies with the line bundle
$\det(E)\otimes L^{-2}$ where $L\subset E$ is the corresponding line bundle, so that the self-intersection is given by
$$\sigma\cdot \sigma=\deg(E)-2\deg(L).$$
Recall that $E$ is semistable if, and only if, $\deg(E)-2\deg(L)\ge 0$
for any line bundle $L\subset E$.
If $L'$ is any line bundle distinct from $L$, then the composition
$$L'\to E\to E/L\simeq \det(E)\otimes L^{-1}$$
is a non trivial homomorphism of line bundles; this defines
a non trivial holomorphic section of $\det(E)\otimes L^{-1}\otimes L'^{-1}$
vanishing at those points where $L$ and $L'$ coincide. 
For the corresponding sections $\sigma$ and $\sigma'$, we deduce
the intersection number
$$\sigma\cdot \sigma'=\deg(E)-\deg(L)-\deg(L')=\frac{1}{2}(\sigma\cdot \sigma+\sigma'\cdot \sigma')\ge 0.$$
A vector bundle $E$ is decomposable, i.e. of the form
$E=L\oplus L'$, if and only if $\P(E)$ admits
two disjoint sections $\sigma$ and $\sigma'$ (which obviously
correspond to $L$ and $L'$ respectively), or equivalently
two sections $\sigma$ and $\sigma'$ having opposite self-intersection numbers.
In this case, the $\mathbb  P^1$-bundle $\P(E)$ may be viewed
as the fibre-compactification $\overline{L'\otimes L^{-1}}$ of the line bundle $L'\otimes L^{-1}$ obtained by adding a section at infinity. Precisely, 
$\sigma$ (resp. $\sigma'$) stands for  
the null section (resp. the section at infinity) in 
$\P(E)\simeq\overline{L'\otimes L^{-1}}$. 

When $X$ is an elliptic curve, say $X:\{y^2=x(x-1)(x-t)\}$, 
and $\det(E)=\mathcal O_X$, 
then $E$ is semistable if, and only if, $\P(E)$ has a section 
having zero self-intersection. The four undecomposable bundles
defined by (\ref{E:UndecomposableBundles}) correspond to the 
same $\mathbb  P^1$-bundle $\P(E_0)$ defined by the 
unique non trivial extension 
$$0\to \mathcal O_X\to E_0\to \mathcal O_X\to 0.$$
The corresponding ruled surface $S_0$ is characterized 
by the fact that it admits one, and exactly one section
having zero self-intersection.
Following Atiyah \cite{Atiyah,Tu}, all other semistable and determinant free vector bundles are decomposable: the corresponding 
$\mathbb  P^1$-bundle takes the form 
$$\P(E)=\overline{\mathcal O_X([\omega]-[-\omega])}$$
where $\pm\omega\in X$ are the two points of the fiber
$\pi^{-1}(\lambda(E))$ (see definition (\ref{E:ModuliSemistableBundle})). We note that the modular 
invariant $\lambda(E)$ is determined by $\P(E)$ 
up to the action of the $2$-torsion points of the elliptic curve $X$:
determinant free vector bundles with $\mathbb  P^1$-bundle 
$\P(E)$ are the four semistable bundles with modular invariants
$$\lambda,\ \ \  \frac{t}{\lambda},\ \ \ \frac{\lambda-t}{\lambda-1}\ \ \ 
\text{and}\ \ \ t\frac{\lambda-1}{\lambda-t}.$$

\subsection{Ruled surfaces and elliptic pull-back}\label{subsec:RuledEllipticPullBack}
Let us now describe the construction of section 
\ref{sec:MainConstruction} in terms of ruled surfaces.
We start from a rational ruled surface 
$p:S=\P(E)\to\P^1$
equipped with the parabolic structure $\boldsymbol{l}$
defined by a point $l_i$ on the fibre $S\vert_i=p^{-1}(i)$
for $i=0,1,t,\infty$.

\noindent{\bf step 1 :} The elliptic ruled surface 
$\tilde p:\tilde S=\P(\tilde E)\to\P^1$ is obtained 
after a two-fold ramified cover 
$\Pi:\tilde S\to S$ ramifying over
the four fibres $S\vert_i$ and makes commutative the following 
diagram 
$$
\xymatrix{
    \tilde S \ar[r]^{\Pi} \ar[d]_{\tilde p}  & S \ar[d]_p \\
    X\ar[r]^{\pi} & \P^1
}
$$
We equipp $ \tilde S$ with the parabolic structure
$\tilde{\boldsymbol{l}}$ defined by $\tilde l_i=\Pi^{-1}(l_i)\in \tilde S\vert_{\omega_i}=\tilde p^{-1}(\omega_i)$ for $i=0,1,t,\infty$.

\noindent{\bf step 2 (and 3) :} The birational transformation 
$$
\xymatrix{ \tilde S \ar@{-->}[rr]^-\phi \ar[rd]_{\tilde p} && \tilde S' \ar[ld]^{\tilde p} \ar@{=}[r]^{\text{step 3}}&\tilde S''\\ & X }
$$
is obtained by blowing-up the four points $\tilde l_i$, and then blowing-down 
the strict transform of the four fibers. Step 3 is not relevant from the projective
point of view since we just multiply $\tilde E'$ by a line bundle
in order to obtain $\tilde E''$. Therefore, $\tilde S''=\tilde S'$.

\subsection{Elliptic ruled surfaces and elementary transformations}
From Lemma \ref{lem:Brunella}, we know that the parabolic
ruled surface $(S,\boldsymbol{l})$ we start with is
\begin{itemize}
\item either $S=\P^1\times\P^1$ and 
not three points $l_i\in S$ lie on the same horizontal line,
\item or $S=\mathbb F_2$ is the second Hirzebruch surface, the total space of $\P(\mathcal O(-1)\oplus\mathcal O(1))$, and no point $l_i$ lie on the ``negative'' section $\sigma$ corresponding to $\mathcal O(1)$.
\end{itemize}
In each case, we denote by $\tilde S'$ the corresponding
elliptic pull-back constructed in section 
\ref{subsec:RuledEllipticPullBack}.

\begin{prop}\label{P:ElmTransfRuled}  When $S=\P^1\times\P^1$, choose a coordinate $w$ and denote by 
$$c=\frac{w_t-w_0}{w_1-w_0}\frac{w_1-w_\infty}{w_t-w_\infty}\in\P^1$$
the cross-ratio where $w_i$ are defined by $l_i=(i,w_i)$. Then we have:
\begin{itemize}
\item when $c\not=0,1,t,\infty$, then $\tilde S'$ is the decomposable
ruled surface
$$\tilde S'=\overline{\mathcal O_X([\omega]-[-\omega])}$$
where $\pm \omega=(x,\pm y)\in X$ are the two
points over $x=t\frac{c-1}{c-t}$;
\item when $c=t$ (all four points $l_i$ lie on the same irreducible bidegree $(1,1)$-curve) then $\tilde S'$ is the undecomposable ruled surface $S_0$;
\item when $c=0,1,\infty$, then at least two points $l_i$ lie on 
the same horizontal line; if the two other points lie on 
another horizontal line, then $\tilde S'$ is the trivial bundle;
else, $\tilde S'$ is the  undecomposable ruled surface $S_0$.
\end{itemize}
When $S=\mathbb F_2$, then we have:
\begin{itemize}
\item when the four points $l_i$ lie on a section having $+2$ self-intersection (i.e. induced by any embedding $\mathcal O_{\P^1}(-1)\hookrightarrow\mathcal O_{\P^1}(-1)\oplus\mathcal O_{\P^1}(1)$)
then $\tilde S'$ is the trivial bundle;
\item else, $\tilde S'$ is the  undecomposable ruled surface $S_0$.
\end{itemize}
\end{prop}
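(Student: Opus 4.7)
The strategy is to classify sections of $\tilde S'$ of self-intersection zero, since these detect whether $\tilde S'$ is the trivial $\P^1$-bundle (infinite family), a decomposable bundle of the form $\overline{\OO_X([\omega]-[-\omega])}$ (exactly two disjoint such sections), or the undecomposable surface $S_0$ (a unique such section). The basic bookkeeping is that under $\phi = \elm^+_{\tilde l_0} \circ \elm^+_{\tilde l_1} \circ \elm^+_{\tilde l_t} \circ \elm^+_{\tilde l_\infty}$, a section $\sigma \subset \tilde S$ of self-intersection $\sigma^2$ passing through exactly $k$ of the four $\tilde l_i$ is sent to a section $\phi_*\sigma \subset \tilde S'$ of self-intersection $\sigma^2 + 4 - 2k$, each elementary transformation decreasing the self-intersection by one when the section meets its centre and increasing it by one otherwise.

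In the case $S = \P^1 \times \P^1$ the pull-back $\tilde S = X \times \P^1$ is trivial, its sections have self-intersection $2d$ for some $d \ge 0$, and $d=1$ is excluded because the elliptic curve carries no degree-one map to $\P^1$. Hence a self-intersection zero section of $\tilde S'$ comes either from (H) a constant section $w = c_0$ of $\tilde S$ meeting exactly two of the $\tilde l_i$ (which forces two of the $w_i$ to agree, i.e.\ the cross-ratio $c \in \{0,1,\infty\}$) or from (D) a degree-two section $w = f(\omega)$ of $\tilde S$ passing through all four $\tilde l_i$.

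The heart of the argument lies in family (D). Every degree-two map $f : X \to \P^1$ factors through an involution $\tau_a : \omega \mapsto 2a - \omega$ and is the pull-back of a M\"obius precisely when $a \in X[2]$, in which case the four conditions $f(\omega_i) = w_i$ are consistent exactly when $c = t$. For $2a \neq 0$, I normalize by a M\"obius of the target so that $(w_0,w_1,w_\infty) = (0,1,\infty)$, set $(x_0,y_0) = 2a$, and use the basis $\{1, (y+y_0)/(x-x_0)\}$ of $H^0(\OO_X([\omega_\infty] + [2a]))$ to pin $f$ down; a direct calculation yields $f(\omega_t) = t(1-x_0)/(t-x_0)$. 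Equating this to $c = w_t$ and inverting produces the formula $x(2a) = t(c-1)/(c-t)$. To recognize this as $\lambda(\tilde E'')$, I track the rank-one subbundle $L' \subset \tilde E$ of degree $-2$ defining $\sigma_f$: since $L'|_{\omega_i} = \tilde l_i$ by construction, each $\elm^+$ raises its degree by one, and after the subsequent twist by $\OO_X(-2[\omega_\infty])$ one obtains a degree-zero subbundle of $\tilde E''$ isomorphic to $\OO_X([-2a]-[\omega_\infty])$; its image under the elliptic involution $\iota$ gives the second, disjoint section, so $\tilde E'' = L \oplus L^{-1}$ with $x(\omega) = \lambda$ as claimed.

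The remaining cases follow by specialization. When $c = t$, family (D) collapses to the unique $\iota$-invariant pull-back section, yielding the lone self-intersection zero section of $S_0$. When $c \in \{0,1,\infty\}$, family (H) contributes one horizontal section per coincident pair of $w_i$'s: a single pair leaves $\tilde S' = S_0$, while two pairs produce two $\iota$-invariant self-intersection zero sections and hence the trivial $\P^1$-bundle. In the $\mathbb{F}_2$ case, Lemma~\ref{lem:Brunella} forces the $l_i$ to avoid the unique $(-2)$-section, whose pull-back is a $(-4)$-section of $\tilde S$ disjoint from the $\tilde l_i$ and therefore produces a canonical $\iota$-invariant self-intersection zero section of $\tilde S'$; any second section must come from a $(+2)$-section of $\mathbb{F}_2$ through all four $l_i$, which gives the stated dichotomy. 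The most delicate point of the whole argument is the subbundle bookkeeping in family (D): each of the four elementary transformations has to raise the degree of $L'$ by exactly one (because $L'$ meets the parabolic direction at each $\omega_i$), so that after the twist by $\OO_X(-2[\omega_\infty])$ its class in $\mathrm{Pic}^0(X)$ is the one predicted by the cross-ratio identity.
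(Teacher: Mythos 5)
Your strategy --- enumerate the self-intersection-zero sections of $\tilde S'$ by pulling them back to $\tilde S$ via the bookkeeping $\sigma'^2=\sigma^2+4-2k$, and then classify the degree-two maps $X\to\P^1$ through the four points by their covering involution $\tau_a$ --- is genuinely different from the paper's. The paper works downstairs: it exhibits the unique bidegree $(2,2)$-curve $C\subset\P^1\times\P^1$ tangent to the four fibres at the $l_i$, checks via its discriminant that $C$ is irreducible and lifts to two transverse sections, and identifies the line bundle by intersecting $C$ with the horizontal line through $l_\infty$ (your two sections $\sigma_f$, $\sigma_{f\circ\iota}$ are exactly the two branches of $\Pi^{-1}(C)$, and your $x_0=t(c-1)/(c-t)$ agrees with the paper's second intersection point of $C$ with $w=\infty$). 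Your computation of $f(\omega_t)$ in the basis $\{1,(y+y_0)/(x-x_0)\}$ is correct, the subbundle bookkeeping giving $\mathcal O_X([\omega_0]+[\omega_1]+[\omega_t]-2[\omega_\infty]-[2a])\cong\mathcal O_X([-2a]-[\omega_\infty])$ is correct, and disjointness of the two resulting sections is automatic since $\sigma_f\cdot\sigma_{f\circ\iota}=4$ is exhausted by the four centres. The counting criterion (infinitely many, exactly two, or exactly one zero-section) does distinguish the trivial bundle, $\overline{\mathcal O_X([\omega]-[-\omega])}$ and $S_0$, so the generic and diagonal cases are complete.

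There is, however, one genuine gap: in the sub-case $c\in\{0,1,\infty\}$ with two coincident pairs you conclude triviality from the existence of ``two $\iota$-invariant self-intersection zero sections''. That inference is false: for $M$ a nontrivial $2$-torsion line bundle, $\P(\mathcal O_X\oplus M)=\overline{M}$ has exactly two zero-sections, both preserved by a lift of $\iota$ (since $\iota^*M=M^{-1}=M$), yet it is not the trivial $\P^1$-bundle --- in the paper's normalization it has Tu invariant $\lambda\in\{0,1,t\}$, not $\infty$. The $\iota$-invariance only shows the difference bundle is $2$-torsion. The fix is immediate with your own tools: the two horizontal sections $w=c_0$, $w=c_1$ correspond after the four $\mathrm{elm}^+$ and the twist to the sub-line-bundles $\mathcal O_X([\omega_i]+[\omega_j]-2[\omega_\infty])$ and $\mathcal O_X([\omega_k]+[\omega_m]-2[\omega_\infty])$, whose quotient is $\mathcal O_X([\omega_i]+[\omega_j]-[\omega_k]-[\omega_m])\cong\mathcal O_X$ because $\omega_i+\omega_j=\omega_k+\omega_m$ in the group law; alternatively, the $\C^*$-family of functions with divisor $[\omega_i]+[\omega_j]-[\omega_k]-[\omega_m]$ already produces infinitely many zero-sections. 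A smaller imprecision of the same flavour occurs in the $\mathbb F_2$ case: a second zero-section of $\tilde S'$ comes from a $+4$-section $s=a+bx+cx^2+dy$ of $\tilde S$ through the four $\tilde l_i$, which is a pull-back of a $+2$-section of $\mathbb F_2$ only when $d=0$; since the incidence conditions constrain only $(a,b,c)$ (the $\omega_i$ lie on $y=0$), the family is empty or one-dimensional, which is what actually yields the stated dichotomy (and, when nonempty, the infinitude needed for triviality). With these two repairs your argument is a complete and rather clean alternative proof.
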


\begin{proof}We first consider the generic case $S=\P^1\times\P^1$ and $c\not=0,1,t,\infty$. One can choose the vertical
coordinate $w$ such that 
$$l_0=(0,0),\ \ \ l_1=(1,1),\ \ \ 
l_t=(t,c)\ \ \ \text{and}\ \ \ l_\infty=(\infty,\infty).$$
One easily check by computation that there is a unique  curve $C\subset\P^1\times\P^1$ of bidegree $(2,2)$ intersecting each fibre $x=i$ at the point $l_i$ with multiplicity $2$. 
The equation 
for $C$ is given by $F=0$ where
\begin{equation}\label{Eq:Fcross}
F(x,w)=((c-t)x-t(c-1))w^2+2(t-1)cxw-cx((c-1)x-(c-t)).
\end{equation}
The discriminant with respect to the $w$-variable, given by 
$$\mathrm{disc}_y(F)=4c(c-1)(c-t)x(x-1)(x-t),$$
is not identically vanishing, with simple roots; 
we deduce that the curve $C$ is reduced, irreducible and smooth. 
Its lifting $\tilde C$ to the trivial bundle $X\times\P^1$ splits into the union 
of $2$ distinct sections $\tilde\sigma_0$ and $\tilde\sigma_\infty$ intersecting 
exactly at the $4$ points $\tilde l_i$ without multiplicity. 
After elementary 
transformations with center $\tilde l_i$, we obtain two disjoint sections
$\sigma_0$ and $\sigma_\infty$ of $\tilde S'$; we already deduce that $\tilde S'$
is the compactification of a line bundle. In order to determine this line bundle,
consider the horizontal section $w=\infty$ of $\P^1\times\P^1$
passing through $l_\infty$: it intersects the $(2,2)$-curve $C$ in $2$ points, namely 
$l_\infty$ and the point $s=(x,\infty)$ with coordinate $x=t\frac{c-1}{c-t}$;
it lifts to a section $\tilde\sigma$ of $X\times\P^1$ intersecting
$\tilde\sigma_0$ over $0$ and, say,  $\omega$, and intersecting $\tilde\sigma_\infty$ over $0$ 
and $-\omega$ where $\pm \omega\in X$ are the two points of $X$ over $x$. 
After elementary transformations with centers $\tilde l_i$,
we derive a section $\sigma$ of $\tilde S'$ intersecting $\sigma_0$ at $\omega$
and $\sigma_\infty$ at $-\omega$: we thus obtain $\tilde S'=\overline{\mathcal O([\omega]-[-\omega])}$. This is resumed in  
picture \ref{figure:1}.

\begin{figure}[htbp]
\begin{center}

\input{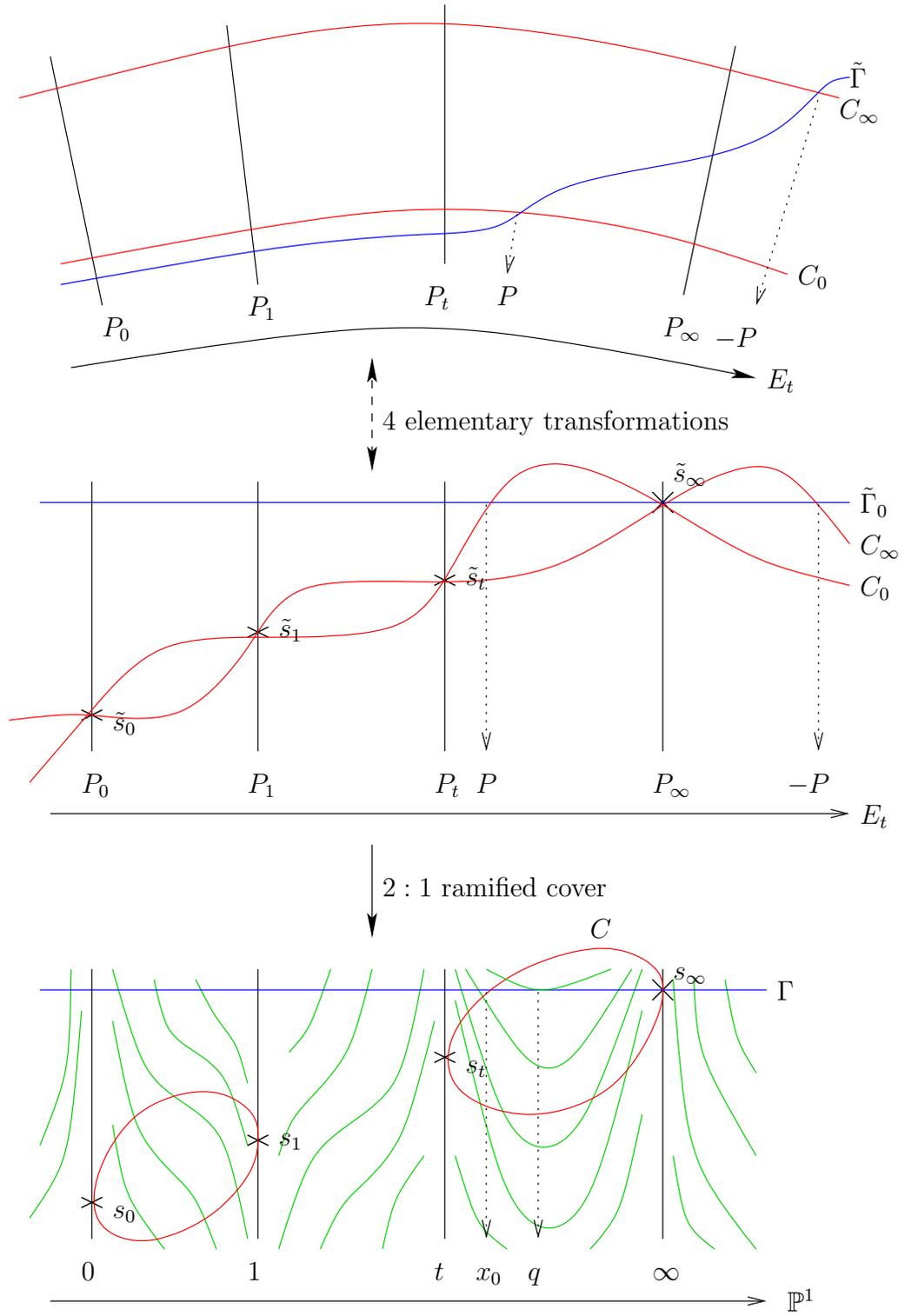}
 
\caption{Lifting $\P(\nabla)$ on the elliptic curve $E_t$}
\label{figure:1}
\end{center}
\end{figure}

When $c=t$, the curve $F=0$ degenerates to twice the $(1,1)$-curve
passing through all $l_i$, namely the diagonal section $\sigma(x)=x$. Its lifting
to $\tilde S$ is the graph $\tilde\sigma$ of the two-fold cover $X\to\P^1$ having self-intersection $+4$. After elementary 
transformations, we deduce a section $\tilde\sigma'$ of $\tilde S'$
having zero self-intersection. More precisely, the normal bundle of 
$\tilde\sigma'$ is the trivial bundle. Indeed, the section 
$\tilde\sigma$ is induced in $\tilde S=\P(\tilde E)$, $\tilde E=\mathcal O_X\oplus\mathcal O_X$, by the line bundle $\tilde L\subset\tilde E$ generated by its meromorphic
section $(x,y)\mapsto\begin{pmatrix}x\\1\end{pmatrix}$, whose
divisor is $-2[\omega_\infty]$; the normal bundle of $\tilde\sigma$
in $\tilde S$ is therefore given by 
$$\mathcal N_{\tilde\sigma}=\det(\tilde E)\otimes\tilde L^{-2}=\mathcal O_X\otimes\mathcal O_X(-2[\omega_\infty])^{-2}=\mathcal O_X(4[\omega_\infty]).$$
After elementary transformations, we obtain
$$\mathcal N_{\tilde\sigma'}=\det(\tilde E')\otimes L'^{-2}$$
$$=\mathcal O_X([\omega_0]+[\omega_1]+[\omega_t]+[\omega_\infty])\otimes\mathcal O_X([\omega_0]+[\omega_1]+[\omega_t]-[\omega_\infty])^{-2}$$
$$=\mathcal O_X.$$
If $\tilde S'$ were decomposable, it would be the trivial $\P^1$-bundle,
what we have now to exclude. Consider the centers 
$\tilde l_i'\in\tilde S'$ of the inverse elementary transformations.
If $\tilde S'$ were the trivial $\P^1$-bundle, most 
of horizontal sections would avoid the four points $\tilde l_i'$
and would define, back to $\tilde S=X\times \P^1$, 
a section having $-4$ self-intersection, impossible.
Thus $\tilde S'$ is the undecomposable bundle $S_0$.

We now assume $c=\infty$; the other cases $c=0$ or $1$
are similar. We are in one of the following three cases:
\begin{itemize}
\item $w_0=0$, $w_1=1$ and $w_t=w_\infty=\infty$;
\item $w_0=w_1=0$, $w_t=1$ and $w_\infty=\infty$;
\item $w_0=w_1=0$ and $w_t=w_\infty=\infty$.
\end{itemize}
The last case is easy since the three horizontal sections 
$\sigma_0$, $\sigma_1$ and $\sigma_\infty$, respectively 
defined by $w=0$, $w=1$ and $w=\infty$, are transformed
in the elliptic pull-back $\tilde S'$ into disjoint
sections $\tilde\sigma_0'$ and $\tilde\sigma_\infty'$,
and a third section $\tilde\sigma_1'$ that intersects
$\tilde\sigma_0'$ at $\omega_t$ and $\omega_\infty$
and $\tilde\sigma_\infty'$ at $\omega_0$ and $\omega_1$.
We promptly deduce that 
$$\tilde S'=\overline{\mathcal O_X([\omega_t]+[\omega_\infty]-[\omega_0]-[\omega_1])}=X\times\P^1.$$
We now study the first case, where only $w_t$ and $w_\infty$ coincide; it is similar to the diagonal case. The section $w=\infty$ 
of $S$ defines a section $\tilde \sigma_\infty'$ of $\tilde S'$ having 
even trivial normal bundle. If $\tilde S'$ were decomposable, it would be the trival bundle, and a generic constant section $\tilde\sigma'$
would provide a section $\tilde\sigma$ of the trivial bundle $\tilde S$
having $-4$ self-intersection, contradiction.
Thus $\tilde S'$ is the undecomposable bundle $S_0$.

Finally, let us consider the case where $S=\mathbb F_2$.
Like before, one easily shows that the exceptional section
$\sigma_\infty$ induced by $\mathcal O_{\P^1}(1)$ 
yields a section $\tilde\sigma_\infty'$ of $\tilde S'$ having
trivial normal bundle. Again, like in the diagonal case,
consider the centers $l_i'\in\tilde S'$ of elementary 
transformations inversing $\phi$: they are contained 
in $\tilde\sigma_\infty'$. If $\tilde S'$ is the trivial bundle,
then constant sections $\tilde\sigma'$ give rise to a pencil of sections
$\tilde\sigma$ of $\tilde S$ having the four points $\tilde l_i$
as base points. A special member of the pencil is given by
the union of $\tilde\sigma_\infty$ and the four fibres over $\omega_i$.
In fact, this pencil consists in all sections
of $\tilde S$ passing through all $\tilde l_i$ that do not intersect
$\tilde\sigma_\infty$ (plus the special one). Now, the elliptic involution 
$\tau:(x,y)\mapsto(x,-y)$ permutes those sections, therefore acting
on the parameter space $\P^1$: there are at least $2$
fixed points, namely $\tilde\sigma_\infty$ and another section 
$\tilde\sigma_0$ that can be pushed down as a section $\sigma_0$
of $S$; by construction, $\sigma_0$ passes through all points $l_i$
and does not intersect $\sigma_\infty$: it's a $+2$-curve as required.
Conversely, when all $l_i$ are contained in a $+2$-curve $\sigma_0$,
we deduce a second section $\tilde\sigma_0'$ of $\tilde S'$
yielding a trivialization. We note that the pencil considered
above comes from a pencil not of sections of $S$, but of curves
intersecting twice a fibre.
\end{proof}

\section{Isomonodromic deformations and the Painlev\'e VI equation}

In this section, we recall how isomonodromic deformations 
of logarithmic $\mathrm{sl}(2,\C)$-connections 
$(E_t,\nabla_t)$
over the $4$-punctured sphere are parametrized by Painlev\'e VI
solutions, and how we can use this parame\-trization to compute
the variation of the bundle $E_t''$ of the corresponding elliptic 
pull-back. Let us first recall what an isomonodromic deformation is.

\subsection{Isomonodromic deformations and flat connections}
Let $X_0$ be a complex projective curve and $(E_0,\nabla_0)$ 
be the data of a rank $2$ vector bundle over $X_0$ equipped
with a (flat) logarithmic $\mathrm{sl}(2,\C)$-connection 
having (reduced) effective polar divisor $D_0$. 
Given a topologically trivial analytic deformation $(X_t,D_t)$
of the punctured curve,
there is a unique deformation $(E_t,\nabla_t)$ of both the vector bundle and the connection having constant monodromy data. 
This just follows from the Riemann-Hilbert correspondence
(proposition \ref{prop:RHparabolic} with parameter).
Monodromy data consists in the monodromy representation,
completed by the ``parabolic structure'' at apparent singular points.
Equivalently, if we denote by $\boldsymbol X$
the total space of the deformation, $\boldsymbol D\subset \boldsymbol X$ the (smooth) divisor, then
the deformation $(E_t,\nabla_t)$ is induced by the unique 
flat logarithmic $\mathrm{sl}(2,\C)$-connection $(E,\nabla)$
with polar divisor $\boldsymbol D$ inducing $(E_0,\nabla_0)$ on the slice $X_0$. 
In this paper, we will consider the case of the $4$-punctured sphere
and the once-punctured torus whose deformation are parametrized 
by the corresponding Teichm\"uller spaces that are
both isomorphic to the Poincar\'e half-plane $\mathbb H$. 
Precisely, we start with the isomonodromic deformation 
of a logarithmic $\mathrm{sl}(2,\C)$-connection 
$(E_t,\nabla_t)$ over the Riemann sphere with poles at 
$0$, $1$, $t$ and $\infty$ where $t$ varrying in the universal
cover $T\simeq\mathbb H\to \P^1\setminus\{0,1,\infty\}$
and we consider the deformation $(\tilde E_t,\tilde\nabla_t)$
of the corresponding elliptic pull-back, as constructed in section \ref{sec:MainConstruction}. As one can easily check, 
$(\tilde E_t,\tilde\nabla_t)$ is still an isomonodromic deformation 
of a logarithmic connection over the Legendre family of elliptic curves 
$X_t$ with poles contained into the ramification
locus $\{\omega_0,\omega_1,\omega_t,\omega_\infty\}$ of the elliptic curve;
the parameter space $T$ is now understood as the Teichm\"uller
space of the torus. For special parameters 
$\boldsymbol{\theta}=(\frac{1}{2},\frac{1}{2},\frac{1}{2},\frac{1}{2}+\frac{\vartheta}{2})$, we obtain the isomonodromic deformation of a 
Lam\'e connection. 

\subsection{Painlev\'e VI equation and fuchsian equations}\label{ss:PainleveFuchs}
Although we do not really need it, it is interesting to recall
how the Painlev\'e VI equation was originaly derived as isomonodromic equation for fuchsian
projective structures on the $4$-punctured sphere with 
one extra branch point. After normalizing the singular points 
as $0$, $1$, $t$ and $\infty$ by a Moebius transformation,
the corresponding
fuchsian $2$nd order differential equation $u_{xx}+f(x)u_x+g(x)u=0$ 
takes the form
\begin{equation}\label{E:Scalar4+1}
\left\{\begin{matrix}
f(x)&=&{\frac {1-\kappa_0}{x}}+{\frac {1-\kappa_1}{x-1}}
+{\frac {1-\kappa_t}{x-t}}-{\frac {1}{x-q}}\\
g(x)&=& \frac{-{\frac {t(t-1) H}{x-t}}+{\frac {q(q-1)p}{x-q}}
+\rho(\kappa_\infty+\rho)}{x(x-1)}
\end{matrix}\right.
\end{equation}
Here, $q\not\in\{0,1,t,\infty\}$ is the branch point, $\kappa_i$
is the local exponent at $i=0,1,t,\infty$ and $\rho$ is fixed by the relation
\begin{equation}\label{E:Rho}
\kappa_0+\kappa_1+\kappa_t+\kappa_\infty+2\rho=1.
\end{equation}
Note that parameters $p$ and $H$ are residues of $g$
\begin{equation}
H=-\mathrm{Res}_{x=t}g(x)\ \ \ \text{and}\ \ \ p=\mathrm{Res}_{x=q}g(x).
\end{equation}
The singular point $q$ has exponent $2$; it is apparent, 
i.e. a branch point of the projective chart, if and only if
the parameter $H$ is given by
\begin{equation}\label{E:Hamiltonien}
\begin{matrix}H=\frac{q(q-1)(q-t)}{ t(t-1)}\left(p^2-({\kappa_0\over q}+{\kappa_1\over q-1}
+{\kappa_t-1\over q-t})p
+{\rho(\kappa_\infty+\rho)\over q(q-1)}\right)\end{matrix}
\end{equation}
Under these assumptions, the local charts $\phi=\frac{u_1}{u_2}$, where $u_1$ and $u_2$ run over independant solutions 
of (\ref{E:Scalar4+1}), defines a projective atlas on the complement
of $0$, $1$, $t$, $q$ and $\infty$ in the Riemann sphere. 
At each singular point $i=0,1,t,\infty$, one of the projective 
charts takes the form
$\phi=z^{\kappa_i}$ (or possibly $\phi=\frac{1}{z^m}+\log(z)$
when $\kappa_i=\pm m\in\Z_{>0}$) for a convenient local coordinate $z$ at $i$; at $q$, one of the projective charts takes
the form $\phi=z^2$, a simple branch point.
Conversely, any projective structure on the Riemann sphere having
five singularities with moderate growth, one of which being a simple
branch point, is conjugated by a Moebius transformation to an element of the above family. Such a projective structure is characterized
by the following data
\begin{itemize}
\item the position $t$ and $q$ of the singular points,
\item the exponents $\boldsymbol{\kappa}=(\kappa_0,\kappa_1,\kappa_t,\kappa_\infty)$
\item and the monodromy representation of a projective chart $\phi$,
up to conjugacy.
\end{itemize}
A small deformation of equation (\ref{E:Scalar4+1}) with moving
singular points $t$ and $q$ is said isomonodromic when the 
projective charts have constant monodromy representation, 
up to conjugacy. They are characterized by the classical

\begin{thm}[Fuchs-Malmquist]\label{T:FuchsMalmquist}
A deformation of equation (\ref{E:Scalar4+1}) parametrized
by the position $t$ of the singular point is isomonodromic if, 
and only if, exponents $\kappa_i$ are fixed and parameters
$(p(t),q(t))$ satisfy the non autonomous hamiltonian system
\begin{equation}\label{eq:HamSyst}
\frac{\partial q}{\partial t}=\frac{\partial H}{\partial p}\ \ \ \text{and}\ \ \ 
\frac{\partial p}{\partial t}=-\frac{\partial H}{\partial q}.
\end{equation}
\end{thm}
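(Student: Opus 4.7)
The plan is to translate the scalar equation (\ref{E:Scalar4+1}) into an equivalent rank-$2$ meromorphic connection on $\P^1$ and then apply the general isomonodromy principle. Writing $Y={}^t(u,u_x)$, the equation $u_{xx}+fu_x+gu=0$ becomes a first-order system $\frac{dY}{dx}=A(x,t)Y$ whose matrix has simple poles at $x=0,1,t,\infty$ with exponents governed by $\boldsymbol{\kappa}$ and $\rho$, plus a pole at $x=q$. The apparentness of $x=q$ (no branching of the projective chart) is exactly the condition that determines the polar part of $g$ at $x=q$ in terms of $(p,q)$, and it is a direct local computation at $x=q$ that this apparentness is equivalent to relation (\ref{E:Hamiltonien}); this places us in the setting of fuchsian systems with four ``genuine'' poles and one apparent pole.

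I would then invoke the general Schlesinger framework: a deformation $A(x,t)$ with simple poles moving analytically in $t$ has locally constant monodromy if and only if there exists a matrix-valued $B(x,t)$, meromorphic in $x$, such that the connection $\nabla=d-A\,dx-B\,dt$ on $\P^1\times T$ is flat, i.e.
$$\frac{\partial A}{\partial t}-\frac{\partial B}{\partial x}+[A,B]=0.$$
The pole structure of $B$ is essentially forced: since only $x=t$ and $x=q$ move, while $0,1,\infty$ are frozen, and since we want $\infty$ to stay regular-singular with fixed exponent, $B$ must be of the form $B=-\frac{A_t}{x-t}$ plus a correction with a single pole at $x=q(t)$ that compensates the motion of the apparent singularity. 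Plugging this ansatz into the flatness equation and reading off residues at $x=0,1,t,q,\infty$ yields the classical Schlesinger equations for the residue matrices $A_i(t)$.

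Finally, one translates the Schlesinger equations back into equations for the accessory parameters $(p(t),q(t))$, using that $p$ and $q$ determine the residue at $x=q$ and, via (\ref{E:Hamiltonien}), the residue $H$ at $x=t$. A direct computation shows that the Schlesinger system collapses to $\dot q=\partial H/\partial p$ and $\dot p=-\partial H/\partial q$; conversely, if $(p,q)$ satisfy (\ref{eq:HamSyst}) with $H$ given by (\ref{E:Hamiltonien}), one checks that the corresponding $B$ makes $\nabla$ flat, so the deformation is isomonodromic. The main obstacle is precisely this last algebraic identification: once the compatibility matrix $B$ is pinned down, one must patiently verify that the coefficient of $(x-q)^{-1}$ and of $(x-q)^{-2}$ in the flatness equation reproduce the Hamilton equations with the specific Hamiltonian (\ref{E:Hamiltonien}); the fact that exponents $\kappa_i$ and $\rho$ remain constant is, by contrast, immediate from the constancy of the eigenvalues of the residues of $A$ along a monodromy-preserving deformation.
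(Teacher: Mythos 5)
The paper offers no proof of this statement: it is recalled as a classical theorem, with only the historical remark that the differential equation goes back to Fuchs and the Hamiltonian form to Malmquist, and the sole use made of it afterwards is the substitution of (\ref{eq:pFromH}) into the second equation of (\ref{eq:HamSyst}) to recover (\ref{E:PainleveEquation}). So there is no proof in the paper to compare yours against; the route you sketch (pass to an associated rank-$2$ system, characterize isomonodromy by the existence of a flat extension $d-A\,dx-B\,dt$, pin down the pole structure of $B$, read off residues) is the standard one and is sound as a strategy.

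As written, however, your argument does not yet establish the theorem, because its entire content sits in the one step you merely assert: ``a direct computation shows that the Schlesinger system collapses to $\dot q=\partial H/\partial p$, $\dot p=-\partial H/\partial q$.'' Everything preceding that is soft (existence of the flat extension, constancy of the exponents along the deformation); the specific Hamiltonian (\ref{E:Hamiltonien}) enters only through that residue computation, which is exactly where Fuchs's and Malmquist's work lies, so it cannot be left as a black box. Two smaller points to repair if you carry the computation out. First, the companion system $Y={}^t(u,u_x)$ is not logarithmic at $x=\infty$ in the naive trivialization (the constant entry $1$ of the companion matrix gives $A\,dx$ a double pole there), so you should either work directly with the scalar equation --- postulating $u_t=B_1u_x+B_2u$ and writing the compatibility with (\ref{E:Scalar4+1}) --- or first perform the gauge normalization at infinity. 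Second, the apparentness of $x=q$ does not determine the polar part of $g$ at $x=q$: that residue is $p$ by definition. What apparentness determines is the residue $H$ at $x=t$ as a function of $(p,q)$, namely formula (\ref{E:Hamiltonien}); this is precisely what makes $H$ available as a Hamiltonian, and getting this dependency the right way around matters for the residue bookkeeping in the flatness equation.
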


The first hamiltonian equation (\ref{eq:HamSyst}) rewrites
\begin{equation}\label{eq:pFromH}
p=\frac{1}{2}\left(\frac{t(t-1)}{q(q-1)(q-t)}\frac{\partial q}{\partial t}
+\frac{\theta_0}{q}+\frac{\theta_1}{q-1}+\frac{\theta_t-1}{q-t}\right)
\end{equation}
Now, substituting (\ref{eq:pFromH}) in the second equation
(\ref{eq:HamSyst}) yields the Painlev\'e VI equation
(\ref{E:PainleveEquation}) with parameter $\boldsymbol{\kappa}$.
From the chronological point of view, the Painlev\'e VI
equation was first derived by Fuchs; the hamiltonian form 
was discovered later by Malmquist.

\subsection{Painlev\'e VI equation and fuchsian systems}\label{ss:PainleveVISystems}
Let us now recall how Painlev\'e VI solutions correspond
to isomonodromic deformations of logarithmic 
$\mathrm{sl}(2,\C)$-connections $(E_t,\nabla_t)$ 
with singular points $0$, $1$, $t$ and $\infty$ over the 
Riemann sphere. Let $(E_t,\nabla_t)$ be such a deformation
and assume moreover that it is irreducible (this does not depend
on $t$, only on the monodromy). By Lemma
\ref{lem:Brunella}, the underlying bundle $E_t$ is either trivial, 
or $\mathcal O(-1)\oplus\mathcal O(1)$, but it turns out
that $E_t$ will be the trivial bundle for all but a discrete
subset of the parameter $T$:
there are no non trivial irreducible isomonodromic deformations 
of such connections on the bundle 
$\mathcal O(-1)\oplus\mathcal O(1)$ (see \cite{Heu}).
It is thus enough to consider isomonodromic
deformations of $\text{sl}(2,\C)$-Fuchsian systems
\begin{equation}\label{E:FuchsianSystem4}
{d Y\over dx}=\left({A_0\over x}+{A_1\over x-1}+{A_t\over x-t}\right)Y,\ \ \ 
A_i\in\mathrm{sl}(2,\C).
\end{equation}
The residual matrix of the singular point at $x=\infty$ is given by
\begin{equation}\label{E:ResiduInfini}
A_0+A_1+A_t+A_\infty=0.
\end{equation}
Let $\pm {\theta_i\over 2}$ denote the eigenvalues of $A_i$:
\begin{equation}\label{E:ValeursPropres}
A_i=\left(\begin{matrix}a_i&b_i\\c_i&-a_i\end{matrix}\right)\ \ \ \text{with}\ \ \ 
a_i^2+b_ic_i={\theta_i^2\over 4},\ \ \ i=0,1,t,\infty
\end{equation}
After change of variable, $Y:=MY$ with $M\in\text{SL}(2,\C)$, 
we normalize
\begin{equation}\label{E:NormalizationAinfinity}
A_\infty=
\left(\begin{matrix}{\theta_\infty\over 2}&0\\ \ast&-{\theta_\infty\over
2}\end{matrix}\right).
\end{equation}
We exclude the case where $A_\infty=0$: $\nabla$ is assumed singular at $\infty$. Then, we have

\begin{thm}\label{T:FuchsSystem}
A small deformation $A_i=A_i(t)$ of the system (\ref{E:FuchsianSystem4})
normalized by (\ref{E:NormalizationAinfinity}) is isomonodromic if, 
and only if,
the eigenvalues $\pm{\theta_i\over 2}$ 
are constant and $q:={tb_0 \over tb_0 +(t-1)b_1}$ satisfies 
\begin{equation}\label{E:qEquation}
{dq\over dt}=-2a_0{q-1\over t-1}-2a_1{q\over t}+(1-\theta_\infty){q(q-1)\over t(t-1)}
\end{equation}
and the Painlev\'e VI equation (\ref{E:PainleveEquation})
with parameter 
$$\boldsymbol\kappa=(\kappa_0,\kappa_1,\kappa_t,\kappa_\infty)
=(\theta_0,\theta_1,\theta_t,\theta_\infty-1).$$
\end{thm}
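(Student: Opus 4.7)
I would follow the classical Jimbo--Miwa route: translate isomonodromy into Schlesinger's equations, then extract a scalar evolution equation for the specific combination $q$, then either massage it directly or recast it as a Hamiltonian system to recognize Painlev\'e VI. The reverse direction (Painlev\'e solution $\Rightarrow$ isomonodromic family) is then immediate by reconstruction. The ``only if'' direction is the substantive one.

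\textbf{Step 1: Schlesinger equations.} Isomonodromy for a Fuchsian system $\partial_x Y = A(x,t)Y$ is equivalent to the existence of $B(x,t)$ with $\partial_t A - \partial_x B + [A,B]=0$; requiring that $B$ have no pole at $\infty$ and that the normalization (\ref{E:NormalizationAinfinity}) be preserved forces $B=-A_t/(x-t)$. This yields the standard Schlesinger system
\begin{equation*}
\frac{dA_0}{dt}=\frac{[A_t,A_0]}{t},\quad \frac{dA_1}{dt}=\frac{[A_t,A_1]}{t-1},\quad \frac{dA_t}{dt}=-\frac{[A_t,A_0]}{t}-\frac{[A_t,A_1]}{t-1}.
\end{equation*}
Because $A_\infty=-(A_0+A_1+A_t)$ is then constant, the normalization (\ref{E:NormalizationAinfinity}) and the eigenvalues $\pm\theta_i/2$ of every $A_i$ are automatically preserved. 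In particular $b_0+b_1+b_t=0$, which makes the $(1,2)$-entry of the full residue matrix a degree-one polynomial in $x$ after clearing denominators, whose unique root is exactly $q=tb_0/(tb_0+(t-1)b_1)$.

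\textbf{Step 2: the equation for $q$.} Using $[A_t,A_i]_{12}=2(a_tb_i-a_ib_t)$, Schlesinger gives
\begin{equation*}
\dot b_0=\frac{2(a_tb_0-a_0b_t)}{t},\qquad \dot b_1=\frac{2(a_tb_1-a_1b_t)}{t-1}.
\end{equation*}
Substituting into the definition of $q$, using $b_t=-(b_0+b_1)$ and the trace condition $a_0+a_1+a_t=-\theta_\infty/2$, a direct differentiation produces exactly (\ref{E:qEquation}); the coefficient $(1-\theta_\infty)$ appears because the sum $a_0+a_1+a_t$ contributes $-\theta_\infty/2$ while the differentiation of the denominator in $q$ contributes an extra symmetric piece which combines with it.

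\textbf{Step 3: identification with Painlev\'e VI.} Rather than differentiate (\ref{E:qEquation}) once more and eliminate $\dot a_i, \dot b_i$ by brute force, I would introduce the conjugate variable $p$ equal to the $(1,1)$-entry of $A(x,t)$ evaluated at $x=q$, normalized so that $(p,q)$ becomes a Darboux-like chart on the space of systems modulo the residual gauge fixing (\ref{E:NormalizationAinfinity}). A calculation then shows that Schlesinger rewrites as the non-autonomous Hamiltonian system (\ref{eq:HamSyst}) with the Hamiltonian (\ref{E:Hamiltonien}) of \S\ref{ss:PainleveFuchs}, for the \emph{shifted} parameter $\boldsymbol{\kappa}=(\theta_0,\theta_1,\theta_t,\theta_\infty-1)$. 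The shift $\kappa_\infty=\theta_\infty-1$ arises because passing from the system to its scalar reduction in \S\ref{ss:PainleveFuchs} introduces an additional apparent singularity at $x=q$ of exponent $2$, which consumes one unit of local exponent at $\infty$ through the relation (\ref{E:Rho}). Invoking Theorem~\ref{T:FuchsMalmquist} for this parameter then gives (\ref{E:PainleveEquation}) for $q(t)$, completing the ``only if'' direction. The converse follows by noticing that (\ref{E:qEquation}) plus Painlev\'e VI determine $(p(t),q(t))$, and that Schlesinger's equations admit a unique extension of the initial data governed by these $(p,q)$.

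\textbf{Main obstacle.} The genuine work is Step 3: identifying the right conjugate variable $p$ and verifying that Schlesinger's equations (six scalar ODEs on $A_0,A_1,A_t$ with three quadratic constraints and one gauge fixing) collapse exactly to the two Hamiltonian equations in $(p,q)$ with the Hamiltonian (\ref{E:Hamiltonien}). The bookkeeping of the parameter shift $\theta_\infty\mapsto \theta_\infty-1$ is the subtle point, and is best tracked by comparing the local exponents of the cyclic vector at $\infty$ with those of the original matrix system.
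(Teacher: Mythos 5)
Your outline is correct, but it follows a genuinely different route from the paper. You go through Schlesinger's equations and then propose to collapse them, via a conjugate variable $p$, onto the Hamiltonian system (\ref{eq:HamSyst}) with Hamiltonian (\ref{E:Hamiltonien}) --- the classical Jimbo--Miwa computation. The paper instead never writes down Schlesinger: it takes the cyclic vector $\begin{pmatrix}0\\1\end{pmatrix}$ (the eigenvector at $\infty$ for $-\theta_\infty/2$, non-invariant by irreducibility), twists by $\sqrt{x^{\theta_0}(x-1)^{\theta_1}(x-t)^{\theta_t}}$, and observes that the resulting scalar equation is \emph{exactly} of the form (\ref{E:Scalar4+1}) with $\kappa_\infty=\theta_\infty-1$ and with $q$, $p$ given by (\ref{E:qDefinition}) and (\ref{eq:pFromAi}); since the system and the scalar equation have the same (projective) monodromy, Theorem \ref{T:FuchsMalmquist} applies verbatim and the whole of your Step~3 --- which you rightly single out as the main obstacle --- is bypassed. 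What your route buys is an independent, purely matrix-level derivation: your Step~2 is sound (I checked that $\dot b_0=2(a_tb_0-a_0b_t)/t$, $\dot b_1=2(a_tb_1-a_1b_t)/(t-1)$, together with $b_0+b_1+b_t=0$ and $a_0+a_1+a_t=-\theta_\infty/2$, do reproduce (\ref{E:qEquation}) exactly), whereas in the paper (\ref{E:qEquation}) only falls out implicitly by equating the two expressions (\ref{eq:pFromH}) and (\ref{eq:pFromAi}) for $p$. Two soft spots in your write-up: first, Step~3 is asserted rather than performed, and it is the only place where real work remains; second, your final appeal to Theorem \ref{T:FuchsMalmquist} is logically misplaced --- that theorem is about the scalar equation (\ref{E:Scalar4+1}), which you never construct. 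Once you have shown that Schlesinger is the Hamiltonian flow of (\ref{E:Hamiltonien}), all that remains is the algebraic elimination of $p$ via (\ref{eq:pFromH}), as described in section \ref{ss:PainleveFuchs}; conversely, if you prefer to invoke Fuchs--Malmquist, you should pass through the cyclic-vector reduction as the paper does, and then Step~3 becomes unnecessary.
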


Let us see first how to deduce Theorem (\ref{T:FuchsSystem})
from Fuchs-Malmquist Theorem (\ref{T:FuchsMalmquist}).

\begin{proof}The vector $\begin{pmatrix}0\\ 1\end{pmatrix}$ is an eigenvector
for the eigenvalue $-\frac{\theta_\infty}{2}$ at $\infty$; by irreducibility
of system (\ref{E:FuchsianSystem4}), it is not invariant and
can be choosen as a cyclic vector to derive a scalar fuchsian
equation. Namely, if $Y=\begin{pmatrix}y_1\\ y_2\end{pmatrix}$
if a solution of (\ref{E:FuchsianSystem4}), then 
$$u:=\sqrt{x^{\theta_0}(x-1)^{\theta_1}(x-t)^{\theta_t}}y_1$$
satisfies the scalar equation $(\ref{E:Scalar4+1})$ with
exponents 
\begin{equation}\label{E:KappaTheta}
(\kappa_0,\kappa_1,\kappa_t,\kappa_\infty)=(\theta_0,\theta_1,\theta_t,\theta_\infty-1)
\end{equation}
and parameters
\begin{equation}\label{E:qDefinition}
q={tb_0 \over tb_0 +(t-1)b_1}.
\end{equation}
and
\begin{equation}\label{eq:pFromAi}
p=\frac{a_0+\frac{\theta_0}{2}}{q}+\frac{a_1+\frac{\theta_1}{2}}{q-1}+\frac{a_t+\frac{\theta_t}{2}}{q-t}.
\end{equation}
In fact, the Darboux coordinates have the following 
interpretation: the point $x=q$ is the unique other point at
which $\begin{pmatrix}0\\ 1\end{pmatrix}$ is again an eigenvector
of system (\ref{E:FuchsianSystem4}) and 
$-p+\frac{\theta_0}{2q}+\frac{\theta_1}{2(q-1)}+\frac{\theta_t}{2(q-t)}$
is the corresponding eigenvalue. From Theorem (\ref{T:FuchsMalmquist}), we deduce that a deformation of 
system (\ref{E:FuchsianSystem4}) is isomonodromic 
if, and only if, the auxiliary variables $p$ and $q$ defined
by (\ref{E:qDefinition}) and (\ref{eq:pFromAi}) satisfy
the hamiltonian system (\ref{eq:HamSyst}) where $H(t,p,q)$ 
is the non autonomous hamiltonian defined by 
(\ref{E:Hamiltonien}).
\end{proof}

Let us now explain how to uniquely reconstruct 
(\ref{E:FuchsianSystem4})
up to gauge transformation from a solution $q(t)$ of Painlev\'e VI
equation. First introduce auxiliary variable $p$ by 
formula (\ref{eq:pFromH}). This gives us a unique scalar
equation (\ref{E:Scalar4+1}) from which one can reconstruct
a fuchsian system by standart method. The resulting system
is defined (up to gauge transformation) by (\ref{E:FuchsianSystem4})
with equations (\ref{E:ValeursPropres}) and
\begin{equation}\label{E:pqcoeffAi}
\left\{\begin{matrix}
a_0&=&\frac{\boldsymbol p}{t}-\frac{\kappa_0}{2}\\
a_1&=&-\frac{\boldsymbol p}{t-1}-\frac{q-1}{t-1}(\rho+\kappa_\infty)-\frac{\kappa_1}{2}\\
a_t&=&\frac{\boldsymbol p}{t(t-1)}+\frac{q-t}{t-1}(\rho+\kappa_\infty)-\frac{\kappa_t}{2}
\end{matrix}\right.
\ \ \ \text{and}\ \ \ 
\left\{\begin{matrix}
b_0&=&-\frac{q}{t}\\
b_1&=&\frac{q-1}{t-1}\\
b_t&=&-\frac{q-t}{t(t-1)}
\end{matrix}\right.
\end{equation}
where $\boldsymbol p=q(q-1)(q-t)p$ and $\rho$ is defined by (\ref{E:Rho}).
The coefficients $c_i$ of the system are immediately deduced
from equations (\ref{E:ValeursPropres}).
The standart formulae given by Jimbo and Miwa 
(see \cite{Boalch}, pages 199-200) assume $\theta_\infty\not=0$
so that the matrix $A_\infty$ can be further normalized as a diagonal matrix by additional gauge transformation; the resulting formulae look much more complicated than above ones. The way we obtain 
formulae (\ref{E:pqcoeffAi}) is described in sections
\ref{sec:RiccatiF2}, \ref{sec:RiccatiF1} and \ref{sec:RiccatiF0}.

\subsection{The vector bundle of an elliptic pull-back}
Coming back to our initial problem, in view of applying Theorem
\ref{thm:ComputeBundle}, we would like parametrize the 
parabolic structure $\boldsymbol l$ induced $\nabla_t$ 
(or equivalently system (\ref{E:FuchsianSystem4})) 
by means of the Painlev\'e VI 
transcendant $q(t)$ that parametrizes the deformation.
From formulae (\ref{E:ValeursPropres}) and (\ref{E:pqcoeffAi}),
we deduce that the eigenline $l_i$ associated to the eigenvalue
$-\frac{\theta_i}{2}$ over the pole $i=0,1,t$ is given by
$$l_i=(-b_i:a_i+\frac{\theta_i}{2})$$
which gives
\begin{equation}\label{E:Formuleli}
\left\{\begin{matrix}
l_0&=&(1:\frac{\boldsymbol p}{q})\\
l_1&=&(1:\frac{\boldsymbol p}{q-1}+\rho+\kappa_\infty)\\
l_t&=&(1:\frac{\boldsymbol p}{q-t}+(\rho+\kappa_\infty)t)\\
l_\infty&=&(0:1)
\end{matrix}\right.
\end{equation}
(recall that $l_\infty$ has been normalized by (\ref{E:NormalizationAinfinity})). These expressions for 
$l_i$ are not valid anymore for a gauge equivalent 
system (\ref{E:FuchsianSystem4}), for instance through
Jimbo-Miwa normalization, but their cross-ratio
\begin{equation}\label{E:pqcrossratio}
c=\frac{l_t-l_0}{l_1-l_0}\frac{l_1-l_\infty}{l_t-l_\infty}
=t\frac{(q-1)p+\rho+\kappa_\infty}{(q-t)p+\rho+\kappa_\infty}
\end{equation}
only depend on system (\ref{E:NormalizationAinfinity})
up to gauge transformation. We note that formula 
(\ref{E:pqcrossratio}) gives an elegant definition 
of the auxiliary variable $p$ in terms of the parabolic structure
of the connection, $q$ and $t$.

\begin{cor}\label{cor:TuInvariantGeneric}
Let $(E_t,\nabla_t)$ be the isomonodromic 
deformation defined by the Painlev\'e VI solution $q(t)$
like above and let $(\tilde E_t'',\tilde\nabla_t'')$ be the 
elliptic pull-back of the deformation. Then $\tilde E_t''$
is semistable and has invariant 
$$\lambda(\tilde E_t'')=q(t)+\frac{\rho+\kappa_\infty}{p(t)}.$$
\end{cor}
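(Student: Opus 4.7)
\medskip

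\textbf{Proof plan for Corollary \ref{cor:TuInvariantGeneric}.} The strategy is to combine the bundle-computation machinery of Theorem \ref{thm:ComputeBundle} with the explicit description of the parabolic structure in terms of the Darboux coordinates $(p,q)$ provided by formula (\ref{E:Formuleli}), and then carry out a short algebraic simplification of the cross-ratio formula (\ref{E:pqcrossratio}).

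First, I would reduce to the generic situation of Theorem \ref{thm:ComputeBundle}. By the result of \cite{Heu} recalled in section \ref{ss:PainleveVISystems}, along the isomonodromic deformation of an irreducible connection with four poles on $\P^1$ the bundle $E_t$ is trivial for all $t\in T$ outside a discrete subset. Thus, removing such an exceptional discrete set, one may assume $E_t$ is trivial and the connection is given by a Fuchsian system of the form (\ref{E:FuchsianSystem4}) normalized by (\ref{E:NormalizationAinfinity}). Under this normalization, the parabolic structure is given explicitly by (\ref{E:Formuleli}); in particular, since $l_\infty=(0:1)$ while the other three lines have first coordinate $1$, no three $l_i$ can coincide (this would force $\boldsymbol{p}=0$ along with coincidences forbidden by the generic condition on $q$, which easily contradicts irreducibility). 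Hence we are in the first case of Theorem \ref{thm:ComputeBundle}, so $\tilde E_t''$ is semistable and
\[
\lambda(\tilde E_t'')=t\,\frac{c-1}{c-t},
\]
where $c$ is the cross-ratio of the four points $l_0,l_1,l_t,l_\infty$.

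Second, I would plug in the cross-ratio formula (\ref{E:pqcrossratio}):
\[
c=t\,\frac{(q-1)p+\rho+\kappa_\infty}{(q-t)p+\rho+\kappa_\infty}.
\]
Setting $A=(q-1)p+\rho+\kappa_\infty$ and $B=(q-t)p+\rho+\kappa_\infty$, so that $c=tA/B$, one computes
\[
tA-B=(t-1)\bigl(qp+\rho+\kappa_\infty\bigr),\qquad A-B=(t-1)p,
\]
which gives $c-1=(tA-B)/B$ and $c-t=t(A-B)/B=t(t-1)p/B$. Therefore
\[
t\,\frac{c-1}{c-t}=\frac{tA-B}{(t-1)p}=\frac{qp+\rho+\kappa_\infty}{p}=q+\frac{\rho+\kappa_\infty}{p},
\]
which is the claimed formula.

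The only step requiring attention is the verification of the generic position hypothesis in Theorem \ref{thm:ComputeBundle}; this is essentially automatic thanks to the explicit expressions (\ref{E:Formuleli}), but one should also note that formula (\ref{E:pqcrossratio}) is invariant under $\mathrm{SL}(2,\C)$-gauge transformations (this is emphasised in the paragraph preceding the corollary), so the resulting value of $\lambda(\tilde E_t'')$ does not depend on the choice of normalization (\ref{E:NormalizationAinfinity}). The main substantive ingredients have already been proved: the bundle computation is Theorem \ref{thm:ComputeBundle}, the parametrisation of isomonodromic deformations by $(p,q)$ is Theorem \ref{T:FuchsSystem}, and triviality of $E_t$ off a discrete set is the cited result from \cite{Heu}. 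Everything else is direct substitution.
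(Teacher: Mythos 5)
Your proof is correct and follows essentially the same route as the paper: the paper's own argument is simply ``by construction $E_t$ is trivial'' (the deformation being reconstructed from $(p,q)$ as a Fuchsian system) followed by Theorem \ref{thm:ComputeBundle} and the identity $t\frac{c-1}{c-t}=q+\frac{\rho+\kappa_\infty}{p}$, which you have verified explicitly. The only cosmetic difference is that you invoke \cite{Heu} for triviality of $E_t$ off a discrete set where the paper takes it as built into the normalization (\ref{E:pqcoeffAi}); your expansion of the cross-ratio computation, which the paper leaves implicit, is accurate.
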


\begin{proof}By construction, $E_t$ is the trivial bundle.
Following Theorem \ref{thm:ComputeBundle}, $\tilde E_t''$
is semi-stable and has invariant
$$\lambda(\tilde E_t'')=t\frac{c-1}{c-t}=q+\frac{\rho+\kappa_\infty}{p}.$$
\end{proof}

All computations above are valid only under generic assumptions
that $q\not=0,1,t,\infty$. On the other hand, it is well known that 
constant solutions $q(t)\equiv0$, $1$, $t$ or $\infty$ correspond 
to isomonodromic deformation of reducible connections. Thus,
Corollary \ref{cor:TuInvariantGeneric} is enough to prove 
Theorem \ref{T:Main}. However, we can be more precise 
and check at special values of $q$ and $p$ if the bundle $\tilde E''$
is undecomposable or not.

\section{The geometry of Painlev\'e VI equations}\label{S:InitialConditions}

Here, we introduce some moduli space $\mathcal M^{\boldsymbol\theta}_{t_0}$ of $\mathrm{sl}(2,\C)$-connections with poles 
at $0$, $1$, $t_0$ and $\infty$ and eigenvalues $\boldsymbol\theta$.
It will contain all irreducible connections. This space originally appeared in the work \cite{Okamoto1} of Okamoto to construct
a good space of initial conditions for the Painlev\'e VI equation
from which the Painlev\'e property can be read off geometrically.
It is identified in \cite{IIS} with a moduli space of connections,
extending the dictionary established in section 
\ref{ss:PainleveVISystems}. After recalling this construction,
we will use it to determine the vector bundle $\tilde E_t$
of an elliptic pull-back for special values of $p$ and $q$.

\subsection{Okamoto's space of initial conditions}
The Painlev\'e property, characterizing the Painlev\'e equations
among differential equations $\lambda''=F(t,\lambda,\lambda')$,
says that {\it all Painlev\'e VI solutions can be analytically 
continuated as meromorphic solutions along any path avoiding 
$0$, $1$ and $\infty$}. Painlev\'e VI solutions become meromorphic
and global on the universal cover of the $3$-punctured sphere.

The (naive) space of initial conditions $\C^2\ni(q(t_0),q'(t_0))$
at some point $t_0\in\P^1\setminus\{0,1,\infty\}$ for the 
Painlev\'e VI equation fails to describe all solutions at the neighborhood of $t_0$. Painlev\'e VI solutions are meromorphic 
and some of them have a pole at $t_0$; we have to add them.
The good space of initial conditions is
\begin{equation}\label{def:SpaceOkamoto}
\mathcal M^{\boldsymbol\theta}_{t}:=\left\{\text{germs of meromorphic $P_{VI}^{\boldsymbol\theta}$-solutions at }t\right\}
\end{equation}
In order to construct it, Okamoto considers the phase portrait
of the Painlev\'e VI equation in variables 
$(t,q,q')\in\left(\P^1\setminus\{0,1,\infty\}\right)\times\C^2$ (introducing auxiliary equation
$\frac{dq}{dt}=q'$): it is defined by a rational vector field
that defines a singular holomorphic foliation on any rational compactification. For instance, we can start with 
$\P^1\times\P^2$ and observe that the
singularities of the foliation are located in special fibers
$t=0,1,\infty$ (that we don't care) and at the infinity of 
the $\P^2$-factor. The latter ones are degenerate,
located along a one dimensional section for the $t$-projection;
we have to blow-up this section in order to reduce the degeneracy 
of the singular points.
After $9$ successive blowing-up like this, Okamoto obtains a fibre bundle
\begin{equation}\label{E:PhaseSpaceBundleCompact}
t:\overline{\mathcal M^{\boldsymbol\theta}}\to
\P^1\setminus\{0,1,\infty\}
\end{equation}
(we just ignore what happens over $t=0,1,\infty$) which is not
locally trivial as an analytic bundle (it is as a topological bundle).
Some non vertical divisor $Z\subset\overline{\mathcal M^{\boldsymbol\theta}}$ consists in vertical leaves (with respect to $t$-projection)
and singular points. On the complement $\mathcal M^{\boldsymbol\theta}:=\overline{\mathcal M^{\boldsymbol\theta}}\setminus Z$
of this divisor,
the Painlev\'e foliation is transversal to $t$ inducing
a local analytic trivialization of the bundle 
\begin{equation}\label{E:PhaseSpaceBundle}
t:\mathcal M^{\boldsymbol\theta}\to
\P^1\setminus\{0,1,\infty\}
\end{equation}
By construction, the fibre $\mathcal M_{t}^{\boldsymbol\theta}$
at any point $t\not=0,1,\infty$ may be interpreted as the set
of germs of meromorphic $P_{VI}^{\boldsymbol\theta}$-solutions;
actually, for special parameters $\boldsymbol\theta$, there are leaves
staying at the infinity of the affine chart $(q,q')$ that cannot be viewed
as meromorphic solutions, but better as ``constant solutions $q\equiv\infty$''. The divisor $Z$ actually coincides with the reduced polar
divisor of the closed $2$-form 
defined in the affine chart by 
$$dt\wedge dH+dp\wedge dq$$ 
where $H$ is defined by (\ref{E:Hamiltonien}).
The kernel of this $2$-form defines the Painlev\'e foliation.

We now describe the parameter space 
$\mathcal M_{t}^{\boldsymbol\theta}$ starting from
the Hirzebruch ruled surface $\mathbb F_2$. 
Define the reduced divisor
$Z_t\subset\mathbb F_2$ as the union of the section
$\sigma:\P^1\to\mathbb F_2$ having $-2$ self-intersection
together with the $4$ fibres
over $0$, $1$, $t_0$ and $\infty$. Next, we fix $2$ points
on each vertical component of $Z_t$, none of them lying
on the horizontal one. After blowing-up
those $8$ points, we obtain the compact space 
$\overline{\mathcal M_t^{\boldsymbol\theta}}$; still denote
by $Z_t$ the strict transform of the divisor. The complement
$\mathcal M_t^{\boldsymbol\theta}:=\overline{\mathcal M_t^{\boldsymbol\theta}}\setminus Z_t$ is the space of initial conditions.
It remains to define the position of the $8$ points in function
of $\boldsymbol{\theta}$ and $t$ (see section \ref{sec:ModuliSpace}).

\subsection{Projective structures and Riccati foliations}\label{sec:ProjStrucRiccati}
We go back to the approach of R. Fuchs
where Painlev\'e transcendants parametrize 
isomonodromic deformations of fuchsian projective
structures with $4+1$ singular points (see section \ref{ss:PainleveFuchs}). Such a structure can be defined 
by the fuchsian $2$nd order differential equation
(\ref{E:Scalar4+1}). One can also define it by the data
of a logarithmic $\mathrm{sl}(2,\C)$-connection $(E,\nabla)$
together with a sub line bundle $L\subset E$ which is not $\nabla$-invariant, playing the role of a cyclic vector (see section \ref{ss:PainleveVISystems}). Such a data is called
an $\mathrm{sl}(2,\C)$-oper in \cite{BeilinsonDrinfeld}.
A more geometrical picture inspired by the works of Ehresman 
is given by the projective oper defined by the triple 
$(\P(E),\P(\nabla),\sigma)$
where $\P(E)$ is the associate $\P^1$-bundle,
$\P(\nabla)$ the induced projective connection
and $\sigma:\P^1\to \P(E)$ the section 
corresponding to $L$. For instance, system (\ref{E:FuchsianSystem4})
defines the Riccati equation 
\begin{equation}\label{E:FuchsianRiccati4}
\frac{dy}{dx}=-b(x)y^2-2a(x)y+c(x)
\end{equation}
on the trivial $\P^1$-bundle by setting $(1:y)=(y_1:y_2)$
where $Y=\begin{pmatrix}y_1\\ y_2\end{pmatrix}$ and 
$A=\begin{pmatrix}a&b\\ c&-a\end{pmatrix}$.
We preferably consider the associate phase portrait, 
namely the singular holomorphic foliation $\mathcal F_0$ induced 
on the ruled surface $\mathbb F_0=\P^1\times\P^1$,
a Riccati foliation
(see \cite{Brunella} or section \ref{sec:ProjConRiccati}).
Singular points of the foliation are located at the poles
of the Riccati equation. Precisely, under notations (\ref{E:ValeursPropres}),
the singular points are
$$\left\{\begin{matrix}
l_i&=&(-b_i:a_i+\frac{\theta_i}{2})=(a_i-\frac{\theta_i}{2}:c_i)\\
l_i'&=&(-b_i:a_i-\frac{\theta_i}{2})=(a_i+\frac{\theta_i}{2}:c_i)
\end{matrix}\right.$$
From the foliation point of view\footnote{For instance, 
$1/\theta_i$ is the Camacho-Sad index of $\mathcal F_1$
along the fibre $x=i$ at $l_i$, see \cite{Brunella}.}, we say that $l_i$ and $l_i'$
have respective {\bf exponents} $\theta_i$ and $-\theta_i$;
they correspond to the eigenlines of the system respectively
associated to eigenvalues $-\frac{\theta_i}{2}$ and $\frac{\theta_i}{2}$
(mind the sign).
In case $\theta_i=0$, either the singular point of the system
is logarithmic and the $2$ singular points of $\mathcal F_1$
coincide, or the singular point of the system is apparent
and the Riccati foliation is not singular. In this latter case,
we will later introduce an additional parabolic structure.
The section $\sigma$ defined by $y=\infty$ plays the role 
of the cyclic vector; it has $2$ tangencies with the Riccati
foliation, namely at $x=q$ and $x=\infty$ (where $\sigma$
passes through a singular point of the foliation),
see bottom of figure \ref{figure:2}.

In this picture, the foliation $\mathcal F_0$ is regular, 
transversal to the
$\P^1$-fibre over a generic point $x$ and is therefore
tranversely projective (see \cite{Croco2}). The foliation
thus induces a projective structure on the section $\sigma$
that projects on the base $\P^1$. It is clear that
the projective structure obtained on (a Zariski open subset of)
$\P^1$ is preserved by a birational bundle transformation
and it is natural to look for the simplest birational model.
In the triple $(\mathbb F_0,\mathcal F_0,\sigma)$ above,
the point $x=\infty$ artificially plays a special role since
we impose by normalization (\ref{E:NormalizationAinfinity})
that the section $\sigma$ passes through the singular point 
$(x,y)=(\infty,\infty)$ of the foliation. 
It is more natural to apply an elementary transformation
at this point and get the following more symetric picture
(see \cite{LorayPereira}):
a Riccati foliation $\mathcal F_1$ on the Hirzebruch ruled 
surface $\mathbb F_1$
having singular points over $0$, $1$, $t$ and $\infty$,
and the section $\sigma$ has now a single tangency 
with the foliation at the point $x=q$. In fact, 
$\sigma:\P^1\to\mathbb F_1$
is the unique negative section (i.e. having $-1$ self-intersection).
The exponents (eigenvalues) of the foliation are now
given by 
$$\boldsymbol{\kappa}=(\kappa_0,\kappa_1,\kappa_t,\kappa_\infty)
:=(\theta_0,\theta_1,\theta_t,\theta_\infty-1).$$
Over each pole $x=i$ of the Riccati equation, the foliation 
$\mathcal F_1$ has $1$ or $2$ singular points depending 
on the exponent $\kappa_i$; when $q=i$,
one of the singular points accidentally lie on $\sigma$,
see the center of figure \ref{figure:2}.

We now add a parabolic structure which is convenient 
for the sequel when we will apply elementary transformations;
it is necessary in order to get a smooth moduli space when 
one of the exponents $\kappa_i$ vanishes.

The Okamoto space of initial conditions $\mathcal M^{\boldsymbol\theta}_{t}$ can be viewed as the moduli space of such Riccati
foliations $\mathcal F_1$. Precisely, we fix exponents
$\boldsymbol{\kappa}$  and parameter $t$, and then consider datas 
$(\mathbb F_1,\mathcal F_1,\sigma,\boldsymbol{l})$
where 
\begin{itemize}
\item $\mathbb F_1$ is the Hirzebruch ruled surface equipped 
with the ruling $\mathbb F_1\to\P^1$,
\item $\mathcal F_1$ is a Riccati foliation on $\mathbb F_1$
regular, transversal to the ruling outside $x=0,1,t,\infty$,
\item over each $i=0,1,t,\infty$, either the fibre $x=i$ is invariant
and there is one singular point for each exponent $\pm\kappa_i$,
or $\kappa_i=0$ and the foliation is regular, transversal to the ruling,
\item $\boldsymbol{l}=(l_0,l_1,l_t,l_\infty)$ where either $l_i$ is the singular point with exponent $\kappa_i$ over $x=i$, or $\kappa_i=0$,
$\mathcal F_i$ is regular over $x=i$ and $l_i$ is any point of the fibre.
\end{itemize}
Such a data exactly corresponds to the projectivization of 
the semistable parabolic connections considered in \cite{IIS}
to construct the moduli space $\mathcal M^{\boldsymbol\theta}_{t}$.

\subsection{Riccati foliations on $\mathbb F_2$}\label{sec:RiccatiF2}
After setting $y=x(x-1)(x-t)u'/u$ in the scalar equation 
(\ref{E:Scalar4+1}), we obtain the Riccati equation
\begin{equation}\label{E:RiccatiF2}
{dy\over dx}=-{y^2\over x(x-1)(x-t)}+
\left({\kappa_0\over x}+{\kappa_1\over x-1}+{\kappa_t\over x-t}+{1\over x-q}\right) y
\end{equation}
$$-{q(q-1)(q-t)p\over x-q}-q(q-1)p+t(t-1)H-\rho(\kappa_\infty+\rho)(x-t).$$
Recall that $\kappa_0+\kappa_1+\kappa_t+\kappa_\infty+2\rho=1$
and $H$ is defined by formula (\ref{E:Hamiltonien}).
Equation (\ref{E:RiccatiF2}) defines a Riccati foliation $\mathcal F_2$ that naturally compactifies on the Hirzebruch surface $\mathbb F_2$
defined by two charts
$$(x,y)\in\left(\P^1\setminus\{\infty\}\right)\times\P^1\ \ \ \text{and}\ \ \ 
(x,\tilde y)\in\left(\P^1\setminus\{0\}\right)\times\P^1$$
with transition map $\tilde y=y/x^2$. The singular points of the foliation
lie on the $5$ fibres
$$x=0,\ \ \ 1,\ \ \ t,\ \ \ q\ \ \ \text{and}\ \ \ \infty$$ 
with respective exponents (up to a sign)
$$\kappa_0,\ \ \ \kappa_1,\ \ \ \kappa_t,\ \ \ \kappa_q=1\ \ \ \text{and}\ \ \ \kappa_\infty.$$
Precisely, the singular points in the first chart $(x,y)$ are given by
\begin{equation}\label{E:SingPointsF2}
\left\{\begin{matrix}s_0=(0,0)\hfill\\ s'_0=(0,t\kappa_0)\end{matrix}\right.\ \ \ 
\left\{\begin{matrix}s_1=(1,0)\hfill\\ s'_1=(1,(1-t)\kappa_1)\end{matrix}\right.\ \ \ 
\left\{\begin{matrix}s_t=(t,0)\hfill\\ s'_t=(t,t(t-1)\kappa_t)\end{matrix}\right.\end{equation}
$$\text{and}\ \ \ 
\left\{\begin{matrix}s_q=(q,\boldsymbol p)\hfill\\ s'_q=(q,\infty)\end{matrix}\right.$$
Here, $s_i$ (resp. $s'_i$) is the singular point with exponent $\kappa_i$ (resp. $-\kappa_i$) and $\boldsymbol p=q(q-1)(q-t)p$.
At $x=\infty$, the singular points are given in chart $(x,\tilde y)$
by
\begin{equation}\label{E:SingPointsF2Infty}
\left\{\begin{matrix}s_\infty=(\infty,-\rho)\hfill\\ s'_\infty=(\infty,-\rho-\kappa_\infty)\end{matrix}\right.
\end{equation}
The ``cyclic vector'' is given by the section $\sigma$ defined
in charts by $y=\infty$ and $\tilde y=\infty$ respectively.

\begin{figure}[htbp]
\begin{center}

\input{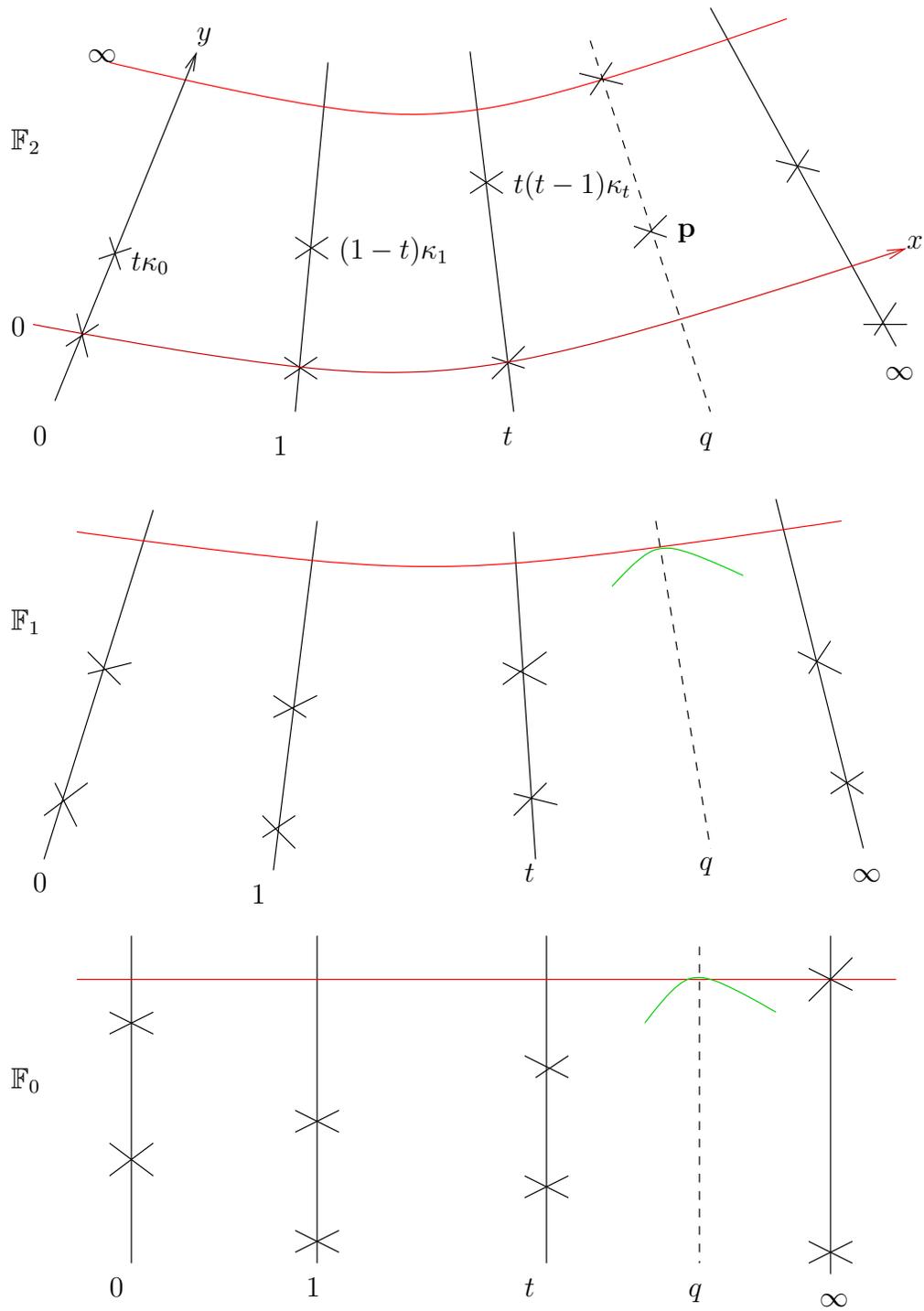}
 
\caption{From $\mathbb F_2$ to $\mathbb F_0$}
\label{figure:2}
\end{center}
\end{figure}

\subsection{Riccati foliations on $\mathbb F_1$}\label{sec:RiccatiF1}
After an elementary transformation with center the nodal singular point $s_q=(q,\boldsymbol p)$ of $\mathcal F_0$,
we obtain a Riccati foliation $\mathcal F_1$ on the Hirzebruch surface
$\mathbb F_1$ with poles $x=0$, $1$, $t$ and $\infty$
with respective exponents $\kappa_0$, $\kappa_1$, $\kappa_t$ and $\kappa_\infty$ like in \ref{sec:ProjStrucRiccati}.
The apparent singular point has disappeared.

If we define $\mathbb F_1$ by usual charts
$$(x,y)\in\left(\P^1\setminus\{\infty\}\right)\times\P^1\ \ \ \text{and}\ \ \ 
(x,\tilde y)\in\left(\P^1\setminus\{0\}\right)\times\P^1$$
with transition map $\tilde y=y/x$, the negative section $\sigma$
is respectively given by $y=\infty$ and $\tilde y=\infty$, and 
the Riccati foliation $\mathcal F_1$ induced by 
\begin{equation}\label{E:RiccatiF1}
{dy\over dx}=\frac{(qy-\boldsymbol p)(qy-\boldsymbol p+t\kappa_0)}{tqx}
\end{equation}
$$-\frac{((q-1)y-\boldsymbol p)((q-1)y-\boldsymbol p+(1-t)\kappa_1)}{(t-1)(q-1)(x-1)}$$
$$+\frac{((q-t)y-\boldsymbol p)((q-t)y-\boldsymbol p+t(t-1)\kappa_t)}{t(t-1)(q-t)(x-t)}
-\rho(\kappa_\infty+\rho).$$
This equation is deduced from (\ref{E:RiccatiF1}) by setting
$y:=(x-q)y+\boldsymbol p$.
The singular points in the first chart $(x,y)$ are now given by
\begin{equation}\label{E:SingPointsF1}
\left\{\begin{matrix}s_0=(0,\frac{\boldsymbol p}{q})\hfill\\ s'_0=(0,\frac{\boldsymbol p-t\kappa_0}{q})\end{matrix}\right.\ \ \ 
\left\{\begin{matrix}s_1=(1,\frac{\boldsymbol p}{q-1})\hfill\\ s'_1=(1,\frac{\boldsymbol p-(1-t)\kappa_1}{q-1})\end{matrix}\right.\ \ \ 
\left\{\begin{matrix}s_t=(t,\frac{\boldsymbol p}{q-t})\hfill\\ s'_t=(t,\frac{\boldsymbol p-t(t-1)\kappa_t}{q-t})\end{matrix}\right.\end{equation}
and the singular points at $x=\infty$ are given in chart $(x,\tilde y)$ by
\begin{equation}
\left\{\begin{matrix}s_\infty=(\infty,-\rho)\hfill\\ s'_\infty=(\infty,-\rho-\kappa_\infty)\end{matrix}\right.
\end{equation}
(again, $s_i$ has exponent $\kappa_i$).

\subsection{Riccati foliations on $\mathbb F_0=\P^1\times\P^1$}\label{sec:RiccatiF0}
Finally, after an ultimate elementary transformation with center the singular point
$s_\infty'$ of $\mathcal F_1$, we obtain a Riccati foliation $\mathcal F_0$
with eigenvalues
$$\boldsymbol\theta=(\theta_0,\theta_1,\theta_t,\theta_\infty)=(\kappa_0,\kappa_1,\kappa_t,\kappa_\infty+1).$$
The underlying ruled surface depends on the relative position of the singular point $s'_\infty$  of $\mathcal F_1$ with respect to the
negative section $\sigma_{-1}$: the Riccati foliation $\mathcal F_0$
is defined on 
\begin{itemize}
\item $\mathbb F_0$ when $s'_\infty\not\in\sigma_{-1}$ (generic case),
\item $\mathbb F_2$ when $s'_\infty\in\sigma_{-1}$ (codimension $1$ case).
\end{itemize}
In particular, $q=\infty$ in the latter case.

When $q\not=\infty$, the Riccati equation defining the foliation 
$\mathcal F_0$ on $\mathbb F_0$ can be deduced from
(\ref{E:RiccatiF1}) by setting $y:=y-(\kappa_\infty+\rho)$.
We obtain equation
\begin{equation}\label{E:RiccatiF0}
{dy\over dx}=\frac{(qy-\boldsymbol p)(qy-\boldsymbol p+t\kappa_0)}{tqx}
\end{equation}
$$-\frac{((q-1)(y-\rho-\kappa_\infty)-\boldsymbol p)((q-1)(y-\rho-\kappa_\infty)-\boldsymbol p+(1-t)\kappa_1)}{(t-1)(q-1)(x-1)}$$
$$+\frac{((q-t)(y-t(\kappa_\infty+\rho))-\boldsymbol p)((q-t)(y-t(\kappa_\infty+\rho))-\boldsymbol p+t(t-1)\kappa_t)}{t(t-1)(q-t)(x-t)}.$$
This is precisely the projectivization of the fuchsian system
defined by (\ref{E:pqcoeffAi}). The singular points $s_i$
with eigenvalues $\kappa_i$ correspond to the parabolic
structure $\boldsymbol{l}$ defined by formula (\ref{E:Formuleli})
\begin{equation}\label{E:SingPointsF0}
\left\{\begin{matrix}
s_0=(0,\frac{\boldsymbol p}{q})\hfill\\
s_1=(1,\frac{\boldsymbol p}{q-1}+\rho+\kappa_\infty)\hfill\\
s_t=(t,\frac{\boldsymbol p}{q-t}+(\rho+\kappa_\infty)t)\\
s_\infty=(\infty,\infty)\hfill
\end{matrix}\right.\ \ \ 
\left\{\begin{matrix}
s_0'=(0,\frac{\boldsymbol p-t\kappa_0}{q})\hfill\\
s_1'=(1,\frac{\boldsymbol p-(1-t)\kappa_1}{q-1}+\rho+\kappa_\infty)\hfill\\
s_t'=(t,\frac{\boldsymbol p-t(t-1)\kappa_t}{q-t}+(\rho+\kappa_\infty)t)
\end{matrix}\right.
\end{equation}
and
$$s'_\infty=-\frac{\boldsymbol p(\boldsymbol p-t\kappa_0)}{tq(\kappa_\infty+1)}
+\frac{\boldsymbol p(\boldsymbol p-(1-t)\kappa_1)}{(t-1)(q-1)(\kappa_\infty+1)}
-\frac{\boldsymbol p(\boldsymbol p-t(t-1)\kappa_t)}{t(t-1)(q-t)(\kappa_\infty+1)}$$
\begin{equation}\label{E:SingPointsF0infty}
-\frac{\kappa_\infty+\rho}{\kappa_\infty+1}
\left((\kappa_\infty+\rho)(q-t-1)-\kappa_1-t\kappa_t\right)
\end{equation}

\subsection{The moduli space $\mathcal M^{\boldsymbol\kappa}_t$}\label{sec:ModuliSpace}
To each $(p,q)\in\C\times\left(\P^1\setminus\{0,1,t,\infty\}\right)$, we have associated a Riccati foliation $\mathcal F_1$
on the Hirzebruch surface $\mathbb F_1$ with poles
$0$, $1$, $t$ and $\infty$ and exponents $\boldsymbol{\kappa}$.

Conversely, given such a Riccati foliation $\mathcal F_1$ 
on $\mathbb F_1$ with poles at $0$, $1$, $t$ and $\infty$, 
the unique tangency $x=q$ between the negative section
$\sigma_{-1}:\P^1\to\mathbb F_1$ and the foliation 
$\mathcal F_1$ defines $q\in\P^1$ uniquely.
Now, if $q\not=0,1,t,\infty$, apply an elementary transformation
at this tangency point to define a Riccati foliation $\mathcal F_2$
on $\mathbb F_2$. There is a unique section $\sigma_2:\P^1\to\mathbb F_2$ having self-intersection $+2$ and passing through 
the singular points $s_0$, $s_1$ and $s_t$ of $\mathcal F_2$;
choose a chart $(x,y)\in\C\times\P^1$ such 
that the section $\sigma_2$ and the negative one, $\sigma_{-2}$,
are respectively given by $y=0$ and $y=\infty$. In fact, 
$y$ is uniquely defined if we normalize the $y^2$-coefficient
of the Riccati equation like in formula (\ref{E:RiccatiF2}).
Then, we observe that all singular points $s_i$ and $s'_i$
for $i=0,1,t,\infty$ only depend on parameters $\boldsymbol{\kappa}$
and $t$; only the singular point $s_q$, given by $(x,y)=(q,\boldsymbol p)$
depend on the particular foliation $\mathcal F_1$. Define $p$ by
$\boldsymbol p=q(q-1)(q-t)p$. We note that {\it the Riccati foliation $\mathcal F_1$ is characterized by the position of the nodal singular point of $\mathcal F_2$ on the Hirzebruch surface $\mathbb F_2$}
(after normalization above). 

\begin{theo}[\cite{IIS}]\label{thm:IISModuliSpace}
Fix parameters
$$\boldsymbol\kappa=(\kappa_0,\kappa_1,\kappa_t,\kappa_\infty)$$
and consider Riccati foliations on the Hirzebruch surface 
$\mathbb F_1$ having at most simple poles 
at $0$, $1$, $t$ and $\infty$, with exponents $\pm\kappa_i$ 
over $x=i$ such that the negative section $\sigma_{-1}$ of $\mathbb F_1$ is not $\mathcal F$-invariant (semi-stability condition). 
Add moreover a parabolic structure 
$\boldsymbol l=(l_0,l_1,l_t,l_\infty)$ which consists over each 
$i=0,1,t,\infty$ of the data
\begin{itemize} 
\item either of the singular point $l_i\in\mathbb F_1$ 
with exponent $\kappa_i$,
\item or, when $\kappa_i=0$ and there is actually no singular point,
of any point of the fibre $x=i$ (the only case where the parabolic 
structure is relevant, i.e. not determined by the foliation itself).
\end{itemize}
The moduli space $\mathcal M^{\boldsymbol\kappa}_t$ of such pairs 
$(\mathcal F,\boldsymbol l)$ up to 
bundle automorphisms is a quasi-projective rational surface 
that can be described as follows. Start with the Hirzebruch surface
$\mathbb F_2$ and denote by $Z_t$ the reduced divisor which consists of the $4$ fibres $x=i$ together with the negative section 
$\sigma_{-2}$. Consider the $8$ points $s_i$ and $s_i'$ defined
by 
\begin{equation}
\left\{\begin{matrix}s_0=(0,0)\hfill\\ s'_0=(0,t\kappa_0)\end{matrix}\right.\ \ \ 
\left\{\begin{matrix}s_1=(1,0)\hfill\\ s'_1=(1,(1-t)\kappa_1)\end{matrix}\right.\ \ \ 
\left\{\begin{matrix}s_t=(t,0)\hfill\\ s'_t=(t,t(t-1)\kappa_t)\end{matrix}\right.\end{equation}
in the first chart $(x,y)$, and
\begin{equation}
\left\{\begin{matrix}s_\infty=(\infty,-\rho)\hfill\\ s'_\infty=(\infty,-\rho-\kappa_\infty)\end{matrix}\right.
\end{equation}
in the chart $(x,\tilde y=\frac{y}{x^2})$. For $i=0,1,t,\infty$:
\begin{itemize} 
\item if $\kappa_i\not=0$, blow-up the two points $s_i$ and $s_i'$,
\item if $\kappa_i=0$, blow-up the single point $s_i=s_i'$ and then 
blow-up the intersection point between the exceptional divisor and the strict transform of the fibre $x=i$.
\end{itemize}
Denote by $\overline{\mathcal M^{\boldsymbol\kappa}_t}$ the 
$8$-point-blowing-up of $\mathbb F_2$ like above. The
moduli space is $\mathcal M_t^{\boldsymbol\kappa}$ is the complement
in $\overline{\mathcal M_t^{\boldsymbol\kappa}}$ of the strict
transform of the divisor $Z_t$.
\end{theo}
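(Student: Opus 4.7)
The plan is to make explicit the map from a pair $(\mathcal{F},\boldsymbol l)$ to the ``extra'' singular point $s_q$ of the transformed foliation $\mathcal F_2$, so that the moduli space is read off as the configuration space of this one point once everything else has been rigidified.

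\textbf{Step 1: Rigidification on $\mathbb F_1$ and passage to $\mathbb F_2$.} Given $(\mathcal F,\boldsymbol l)$ on $\mathbb F_1$, the semi-stability hypothesis (the negative section $\sigma_{-1}$ is not $\mathcal F$-invariant) implies, by the same index argument as in Lemma \ref{lem:Brunella}, that $\sigma_{-1}$ has exactly one tangency point $x=q\in\P^1$ with $\mathcal F$ (the self-intersection of $\sigma_{-1}$ is $-1$ and the pole divisor has degree $4$). When $q\not=0,1,t,\infty$, perform the elementary transformation centered at this tangency; one obtains a Riccati foliation $\mathcal F_2$ on $\mathbb F_2$ with a new nodal singular point $s_q$ over $x=q$, in addition to the singularities over $0,1,t,\infty$ coming from $\mathcal F$.

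\textbf{Step 2: Fixing coordinates on $\mathbb F_2$.} On $\mathbb F_2$ the negative section $\sigma_{-2}$ (of self-intersection $-2$) is canonical. I would then show that the three points $s_0,s_1,s_t\in\mathcal F_2$ (images of $l_0,l_1,l_t$ which sit on the $+1$ section of $\mathbb F_1$) lie on a unique $+2$ section $\sigma_{+2}$, since a $+2$ section is determined by any three points in distinct fibres not on $\sigma_{-2}$. Choosing affine charts $(x,y)$ such that $\sigma_{\pm 2}=\{y=\infty\}$, $\{y=0\}$, and normalizing the $y^2$-coefficient of the Riccati equation as in (\ref{E:RiccatiF2}), fixes the bundle automorphism group completely. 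An explicit computation of the Riccati equation then shows that the seven points $s_i,s_i'$ for $i=0,1,t,\infty$ are the fixed points stated in the theorem (depending only on $\boldsymbol\kappa$ and $t$), whereas $s_q=(q,\boldsymbol p)$ is free.

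\textbf{Step 3: Parametrization by $(q,\boldsymbol p)$ and compactification.} The content of sections \ref{sec:RiccatiF2}--\ref{sec:RiccatiF0} is that the assignment $\mathcal F_1\mapsto s_q\in\mathbb F_2\setminus Z_t$ is a bijection onto its image, with inverse given by the explicit formula (\ref{E:RiccatiF2}). This identifies the open part $\{q\not=0,1,t,\infty\}$ of $\mathcal M_t^{\boldsymbol\kappa}$ with the complement in $\mathbb F_2$ of $Z_t$ together with the seven distinguished points $s_i,s_i'$ (which represent collisions, i.e.\ already degenerated configurations). To incorporate the boundary $q\in\{0,1,t,\infty\}$ and the case $q=\infty$ (where $\mathcal F_2$ lives again on $\mathbb F_2$ as noted in \ref{sec:RiccatiF0}), one blows up each $s_i$ and $s_i'$: the new exceptional divisor parametrizes exactly those $\mathcal F_1$ whose unique tangency has migrated to the fibre $x=i$ and then through a singular point with prescribed exponent. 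When $\kappa_i=0$ the two points $s_i=s_i'$ coincide; a second blow-up along the exceptional divisor is then needed to separate the parabolic direction $l_i$ (which is no longer forced by the foliation), matching exactly the second bullet of the theorem.

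\textbf{Step 4: Removing $Z_t$.} Finally I would check that the strict transform of the divisor $Z_t$ (four fibres plus $\sigma_{-2}$) corresponds to pairs $(\mathcal F,\boldsymbol l)$ that are either unstable (the negative section is $\mathcal F$-invariant, violating the hypothesis) or not well-defined as Riccati foliations on $\mathbb F_1$ (the elementary transformation is not invertible on such loci). Hence $\mathcal M_t^{\boldsymbol\kappa}=\overline{\mathcal M_t^{\boldsymbol\kappa}}\setminus Z_t$. The main obstacle is Step 3: one must verify case by case, for each of the eight centers and for the vanishing-$\kappa_i$ case, that the exceptional locus introduced by the blow-up is in bijection with the degenerations of $(\mathcal F,\boldsymbol l)$ it is meant to parametrize, and that no further blow-ups are needed (i.e.\ the family of Riccati foliations becomes a morphism to $\overline{\mathcal M_t^{\boldsymbol\kappa}}$ which is an isomorphism onto the complement of $Z_t$). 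This verification is where the specific geometry of $\mathbb F_2$ and the elementary transformation meet the formal moduli-theoretic content of \cite{IIS}.
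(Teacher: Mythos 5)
Your overall strategy --- rigidify via the unique tangency of $\mathcal F$ with $\sigma_{-1}$, pass to $\mathbb F_2$, and read the moduli space off the position of the nodal point $s_q=(q,\boldsymbol p)$ --- is exactly the dictionary the paper sets up in the paragraphs preceding the theorem, so Steps 1 and 2 are sound (note only that there are eight distinguished points $s_i,s_i'$, not seven, and that they all lie on $Z_t$, so the open stratum is simply $\mathbb F_2\setminus Z_t$). The problem is that the entire content of the theorem sits in your Step 3, and there you assert rather than prove the two things that need proving: (i) that the exceptional $\P^1$ over each $s_i$, $s_i'$ is in bijection with the boundary foliations (those whose tangency has migrated into the fibre $x=i$, equivalently whose singular point $s_i'$ resp.\ $s_i$ lies on $\sigma_{-1}$), and (ii) that no further modification is needed, i.e.\ that the family of foliations extends to a genuine isomorphism over the exceptional locus. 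You flag this yourself as ``the main obstacle'' and defer it to \cite{IIS}, which leaves the argument incomplete precisely where the work is.

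The paper closes this gap not by working on $\mathbb F_2$ but by a direct coordinate computation on $\mathbb F_1$: it normalizes the Riccati equation to the form (\ref{eq:NormalFormF1}) with free coefficients $(q,b_0,c_0,c_1,c_t,c_\infty)$, kills the bundle automorphisms, and uses the discriminant relations $\Delta_i=\kappa_i^2$ to solve for $c_1,c_t,c_\infty$ near $q=0$. What remains is the affine surface $b_0^2+4qc_0=(t\kappa_0)^2$ in $(q,b_0,c_0)$-space, and this surface \emph{is} the blow-up of the $(q,b_0)$-plane at the two points $(0,\pm t\kappa_0)$ minus the strict transform of $\{q=0\}$: away from $q=0$ the coefficient $c_0$ is determined, while over $q=0$ it becomes a free affine coordinate on each exceptional line, which is exactly claim (i) made effective, with (ii) for free since the normal form is a universal family. (The points $(0,\pm t\kappa_0)$ in the $(q,b_0)$-chart correspond to $s_0$ and $s_0'$ under the affine relation between $b_0$ and $\boldsymbol p$.) When $\kappa_0=0$ the surface $b_0^2+4qc_0=0$ acquires an $A_1$-singularity at the origin --- the foliation with no pole at $x=0$ --- and the parabolic direction $s_0=b_0/(2q)$ provides the small resolution, accounting for the second blow-up in that case. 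If you want to complete your proposal along your own lines, you would need an equivalent explicit local computation near each $s_i,s_i'$; without it the identification of the exceptional divisors remains a plausibility argument.
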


The $8$ points $s_i$ and $s_i'$ of the statement are nothing
but the singular points of the foliation on $\mathbb F_2$
described in section \ref{sec:RiccatiF2}.
When the nodal singular point $(x,y)=(q,\boldsymbol p)$ tends
to $s_i$ (resp. $s_i'$), the corresponding foliation on 
$\mathbb F_1$ (see section \ref{sec:RiccatiF1}) has its
singular point $s_i'$ (resp. $s_i$) tending to the negative
section $\sigma_{-1}$. The limit depend on the way the 
nodal point tends to $s_i$ or $s_i'$. When $\kappa_i\not=0$,
the fibre of $\mathcal M_t^{\boldsymbol\kappa}$ over $x=i$
consists in two disjoint copies $S_i$ and $S_i'$ of the affine line $\C$,
over $s_i$ and $s_i'$  respectively: they stand for the moduli 
space of those foliations $\mathcal F$ whose singular point 
$s_i'$ (resp. $s_i$) lie on the negative section $\sigma_{-1}$.
When $\kappa_i=0$, the fibre of $\mathcal M_t^{\boldsymbol\kappa}$ over $x=i$ consists in the union of an affine line $S_i\simeq\mathbb F$
and a projective line $S_i'\simeq\P^1$ that intersect
transversely at one point, and project to $s_i=s_i'$. The component
$S_i$ stands for those foliations for which the point $s_i$ lies
on the negative section $\sigma_{-1}$; the compact component
$S_i'$ stands for those parabolic foliations $(\mathcal F,\boldsymbol l)$
which have actually no pole over $x=i$. If we neglect the parabolic
structure, then the rational curve $S_i'$ blows down to a quadratic
singular point. 

\begin{proof}[Sketch of the proof of Theorem 
\ref{thm:IISModuliSpace}]
Let $\mathcal F$ be a Riccati foliation on $\mathbb F_1$
having at most simple poles over $x=0$, $1$, $t$ and $\infty$, such that the
negative section $\sigma_{-1}$ is not invariant (semi-stability).
In standart the chart $(x,y)$, the foliation is defined by
$$\frac{dy}{dx}=\frac{(a_1x+a_0)y^2+(b_2x^2+b_1x+b_0)y+(c_3x^3+c_2x^2+c_1x+c_0)}{x(x-1)(x-t)}$$
with $a_k,b_k,c_k\in\C$, $a_0$ and $a_1$ not vanishing simultaneously. Bundle automorphisms are given by changes of coordinate of the form $y:=ay+bx+c$, $a,b,c\in\C$, $a\not=0$.
The single tangency between $\mathcal F$ and the negative section $\sigma_{-1}$ is given by $x=q:=-\frac{a_0}{a_1}$. Using change of coordinate $y:=ay$, one may normalize 
$$\text{either}\ \ \ a_1x+a_0=x-q,\ \ \ \text{or}\ \ \ a_1x+a_0=\tilde qx-1,$$
where $\tilde q=\frac{1}{q}$. Let us first assume that $q\not=\infty$
so that we may assume $a_1x+a_0=x-q$. Using a change of coordinate
of the form $y:=y+bx+c$ we may further assume $b_1=b_2=0$.
By the way, assuming $q\not=\infty$, we get a unique normal form
\begin{equation}\label{eq:NormalFormF1}
\frac{dy}{dx}=\frac{(x-q)y^2+b_0y}{x(x-1)(x-t)}+\frac{c_0}{tx}+\frac{c_1}{(1-t)(x-1)}+\frac{c_t}{t(t-1)(x-t)}+c_\infty
\end{equation}
From formula (\ref{E:RiccatiF1}), we get
$$b_0=-2\boldsymbol p+(q-1)(q-t)\kappa_0+q(q-t)\kappa1+q(q-1)\kappa_t.$$
The residue at $x=0$ is defined by 
\begin{equation}\label{Eq:ResF1Gen0}
{dy}=\frac{-qy^2+b_0y+c_0}{t}\frac{dx}{x}+(\text{holomorphic at }x=0)
\end{equation}
and the exponents $\pm\kappa_0$ by the discriminant 
$$\Delta_0:=\frac{b_0^2+4qc_0}{t^2}=\kappa_0^2.$$
Similarly, we get
$$\Delta_1:=\frac{b_0^2+4(q-1)c_1}{(t-1)^2}=\kappa_1^2,\ \ \ 
\Delta_t:=\frac{b_0^2+4(q-t)c_t}{t^2(t-1)^2}=\kappa_t^2,$$
$$\text{and}\ \ \ \Delta_\infty:=1-4c_\infty=\kappa_\infty^2.$$
Once parameter $\boldsymbol{\kappa}$ is fixed, one can determine
uniquely $c_0$, $c_1$, $c_t$ and $c_\infty$ in function of 
$\boldsymbol\kappa$, $q$ and $b_0$ (i.e. $\boldsymbol p$) provided
that $q\not=0,1,t,\infty$. At the neighborhood of $q=0$, 
$c_1$, $c_t$ and $c_\infty$ are still determined as functions
of $\boldsymbol\kappa$, $q$ and $b_0$ and the moduli space 
of such foliations is locally isomorphic to the surface 
$$\{(q,b_0,c_0)\in\C^3,\ b_0^2+2qc_0=(t\kappa_0)^2\}.$$
We promptly see that the moduli space consists, over $q=0$, 
of an affine line parametrized by $c_0$ over each point $b_0=\pm t\kappa_0$. 

When $\kappa_0\not=0$, the graph 
$$c_0=-\frac{(b_0-t\kappa_0)(b_0+t\kappa_0)}{4q}$$
is clearly obtained by blowing-up the two points 
and then deleting the level $c_0=\infty$, 
i.e. the strict transform of $q=0$. From the residue 
(\ref{Eq:ResF1Gen0}), we deduce that are equivalent
over $q=0$:
\begin{itemize}
\item $q=0$ and $b_0=t\kappa_0$ (resp. $b_0=-t\kappa_0$),
\item $q=0$ and $\boldsymbol p=0$ (resp. $\boldsymbol p=t\kappa_0)$,
\item the corresponding foliation $\mathcal F_1$
has its singular point $s_0'$ (resp. $s_0$) lying
on the negative section $y=\infty$.
\end{itemize}
When $\kappa_0=0$, the graph
$$c_0=-\frac{b_0^2}{4q}$$
is now obtained after two blowing-up and 
the affine line parametrized by $c_0$ stands 
for those foliations $\mathcal F_1$ whose singular point
$s_0=s_0'$ lies on the negative section $y=\infty$.
The surface equation $b_0^2+4qc_0=0$ has a quadratic
singular point at $(q,b_0,c_0)=(0,0,0)$ that corresponds 
to the case where the residue (\ref{Eq:ResF1Gen0}) vanishes,
i.e. $\mathcal F_1$ has actually no singular point at $x=0$. 
The parabolic data at $x=0$ provides the desingularization
of the surface. Indeed, the moduli of pairs $(\mathcal F_1,\boldsymbol l)$
is locally parametrized, at the neighborhood of $q=0$,
by 
$$\{(q,b_0,c_0,s_0)\in\C^3\times\P^1,\ b_0^2+2qc_0=0,\ 2q s_0=b_0\}$$
(or we should better write $2q u_0=b_0 v_0$ where $(u_0:v_0)=s_0$).
The parabolic data $s_0=\frac{b_0}{2q}$ parametrizes
the exceptional divisor $S_0'$ (thus providing the blowing up).
The intersection $S_0\cap S_0'$ between the two components,
given by $(q,b_0,c_0,s_0)=(0,0,0,\infty)$, is the foliation without
singular point at $x=0$ whose parabolic structure lies on the negative section $y=\infty$.

The study of $q=1,t,\infty$ is similar except that for $q=\infty$, we have to 
choose the alternate normalization
$$\frac{dy}{dx}=\frac{(\tilde qx-1)y^2+\tilde b_2x^2y}{x(x-1)(x-t)}+\frac{\tilde c_0}{x}+\frac{\tilde c_1}{x-1}+\frac{\tilde c_t}{x-t}+\tilde c_\infty$$
where $(\tilde q,\tilde b_2)=(\frac{1}{q},\frac{b_0}{q^2})$ is the other
chart of $\mathbb F_2$.
\end{proof}

The deformation $t\mapsto \mathcal M^{\boldsymbol\kappa}_t$
is analytically trivial (but not algebraically), and the trivialization
is given by the Painlev\'e flow (see \cite{STT,IIS}). The good 
phase space of the Painlev\'e VI equation is the (locally analytically
trivial) fibration 
$$t:\mathcal M^{\boldsymbol\kappa}\to\P^1\setminus\{0,1,\infty\}.$$
The map $q:\mathcal M^{\boldsymbol\kappa}_t\to\P^1$
is regular (whenever no $\kappa_i=0$) and gives $\mathcal M^{\boldsymbol\kappa}_t$ (for $t$ fixed) a structure of affine 
$\mathbb A^1$-bundle with double fibers over $0,1,t,\infty$. Finally, we note that formula (\ref{E:RiccatiF1}) defines 
an explicit section (universal Riccati foliation)
$$(t,q,\boldsymbol p)\mapsto (\mathbb F_1,\mathcal F_1)$$
over $\mathcal M^{\boldsymbol\kappa}\setminus\{q=0,1,t,\infty\}$.

\subsection{The moduli space $\mathcal M^{\boldsymbol\theta}_t$ of $\mathrm{sl}(2,\C)$-connections}\label{sec:ModuliSpaceSL2}

From section \ref{sec:RiccatiF0}, we get an isomorphism
$$ \mathrm{elm}_{s_\infty'}\ :\ \mathcal M^{\boldsymbol\kappa}_t\stackrel{\sim}{\longrightarrow} \mathcal M^{\boldsymbol\theta}_t$$
from the previous moduli space, to the moduli space $\mathcal M^{\boldsymbol\theta}_t$ of $\mathrm{sl}(2,\C)$-connections
$(E,\nabla)$ having exponent 
$$\boldsymbol{\theta}=(\theta_0,\theta_1,\theta_t,\theta_\infty):=(\kappa_0,\kappa_1,\kappa_t,\kappa_\infty+1).$$
Along this moduli space, recall that the underlying vector bundle $E$ is either trivial, or $\mathcal O(-1)\oplus\mathcal O(1)$;
it is actually trivial on a Zariski open set of $\mathcal M^{\boldsymbol\theta}$.
The locus of the non trivial bundle is Malgrange's Theta Divisor $\Theta\subset\mathcal M^{\boldsymbol\theta}$ defined
for fixed $t$ by the exceptional divisor $S_\infty$ obtained after blowing-up $s_\infty$. Indeed, this corresponds to those 
foliations $\mathcal F_1$ for which $s_\infty'$ lie in the negative section $\sigma_{-1}$; applying $\mathrm{elm}_{s_\infty'}$
gives a foliation $\mathcal F_0$ on the Hirzebruch surface $\mathbb F_2=\mathbb P(\mathcal O(-1)\oplus\mathcal O(1))$.
In fact, we get $\mathcal F_0=\mathcal F_2$ in this case: we are back to the foliation of section \ref{sec:RiccatiF2}
where $s_q\to s_\infty$. Precisely, if we set 
$$p=\frac{\lambda-\rho q}{(q-1)(q-t)}\ \ \ \text{and let}\ \ \ q\to\infty$$
in formula (\ref{E:Scalar4+1}), we then get Heun equation
$$g(x)=\frac{\rho(\rho+\kappa_\infty+1)x-\lambda\kappa_\infty-\rho((t+1)\rho+\kappa_1+t\kappa_t)}{x(x-1)(x-t)}.$$
For our special parameters $\boldsymbol{\theta}=(\frac{1}{2},\frac{1}{2},\frac{1}{2},\frac{1}{2}+\frac{\vartheta}{2})$,
we get Lam\'e equation (\ref{E:LameEquationLegendre}) with 
$n=\frac{\vartheta}{2}$ and $c=2\lambda(\vartheta-1)+(t+1)\frac{\vartheta}{2}\left(\frac{\vartheta}{2}-1\right)$.

Finally, the open set
$\mathcal M^{\boldsymbol\theta}-\Theta$ (resp. the closed subset $\Theta$) may be viewed 
as the moduli space of Riccati foliations $\mathcal F_0$
on $\mathbb F_0$ (resp. $\mathbb F_2$) having simple poles $(0,1,t,\infty)$
and exponents $(\theta_0,\theta_1,\theta_t,\theta_\infty)$ excluding those for 
which the section passing through $s_\infty$ (resp. the exceptional section) 
is totally $\mathcal F_0$ invariant.

\subsection{Okamoto symetries}\label{sec:OkaSym}
There are many birational transformations
$$(\boldsymbol\kappa,t,p,q)\mapsto(\tilde{\boldsymbol\kappa},\tilde{t},\tilde{p},\tilde{q})$$
that induce biregular diffeomorphisms between moduli spaces
$$\mathcal M^{\boldsymbol\kappa}\to\mathcal M^{\tilde{\boldsymbol\kappa}}$$
equivariant with the projection $t$. They are studied in \cite{Okamoto2}: some of them are classical, known as Schlesinger 
transformations, arising from geometrical transformations on 
connections (resp. Riccati foliations); together with a strange extra symetry, they generate
the full group of Okamoto symetries.

\subsubsection{Change of signs}
First of all, we can change the ``spin structure'', i.e. the signs of 
parameters $\pm\kappa_i$. This does not change neither the Riccati
foliation, nor the coefficients $t$, $q$, $b_0$ of the normal form 
(\ref{eq:NormalFormF1}), but the variable $p$ is modified as follows:

$$(-,+,+,+):\ \ \ \left\{\begin{matrix}(\kappa_0,\kappa_1,\kappa_t,\kappa_\infty)&\mapsto&(-\kappa_0,\kappa_1,\kappa_t,\kappa_\infty)\\
t&\mapsto& t\\
(q,p)&\mapsto&(q,p-\frac{\kappa_0}{q})\end{matrix}\right.$$

$$(+,-,+,+):\ \ \ \left\{\begin{matrix}(\kappa_0,\kappa_1,\kappa_t,\kappa_\infty)&\mapsto&(\kappa_0,-\kappa_1,\kappa_t,\kappa_\infty)\\
t&\mapsto& t\\
(q,p)&\mapsto&(q,p-\frac{\kappa_1}{q-1})\end{matrix}\right.$$

$$(+,+,-,+):\ \ \ \left\{\begin{matrix}(\kappa_0,\kappa_1,\kappa_t,\kappa_\infty)&\mapsto&(\kappa_0,\kappa_1,-\kappa_t,\kappa_\infty)\\
t&\mapsto& t\\
(q,p)&\mapsto&(q,p-\frac{\kappa_t}{q-t})\end{matrix}\right.$$

$$(+,+,+,-):\ \ \ \left\{\begin{matrix}(\kappa_0,\kappa_1,\kappa_t,\kappa_\infty)&\mapsto&(\kappa_0,\kappa_1,\kappa_t,-\kappa_\infty)\\
t&\mapsto& t\\
(q,p)&\mapsto&(q,p)\end{matrix}\right.$$

\subsubsection{Permutation of poles}
We can now permute the $4$ poles of the Riccati foliation. 
The order $24$ group is generated by 
$$(01):\ \ \ \left\{\begin{matrix}(\kappa_0,\kappa_1,\kappa_t,\kappa_\infty)&\mapsto&(\kappa_1,\kappa_0,\kappa_t,\kappa_\infty)\\
t&\mapsto& 1-t\\
(q,p)&\mapsto&(1-q,-p)\end{matrix}\right.$$
$$(1t):\ \ \ \left\{\begin{matrix}(\kappa_0,\kappa_1,\kappa_t,\kappa_\infty)&\mapsto&(\kappa_0,\kappa_t,\kappa_1,\kappa_\infty)\\
t&\mapsto& \frac{1}{t}\\
(q,p)&\mapsto&(\frac{q}{t},tp)\end{matrix}\right.$$
$$(0\infty)(1t):\ \ \ \left\{\begin{matrix}(\kappa_0,\kappa_1,\kappa_t,\kappa_\infty)&\mapsto&(\kappa_\infty,\kappa_t,\kappa_1,\kappa_0)\\
t&\mapsto& t\\
(q,p)&\mapsto&(\frac{t}{q},-\frac{q(qp+\rho)}{t})\end{matrix}\right.$$
(the change of variable is given by $\tilde x=1-x$, 
$\frac{x}{t}$ and $\frac{t}{x}$ respectively; we get
 $\tilde b_0=b_0$, $\frac{b_0}{t^2}$ and $\frac{t(q-1)(q-t)-tb_0}{q^2}$  respectively).
 
Together with sign changes, we already get a linear group of order $384$
acting on our moduli space. 

\subsubsection{Elementary transformations}
Let $\mathcal F$ be a Riccati foliation on $\mathbb F_1$
representing a point of $\mathcal M^{\boldsymbol\kappa}_t$.
We can apply an elementary transformation with center one of the singular points of $\mathcal F$, say $s_0$, to obtain a new Riccati foliation with simple poles
over $0$, $1$, $t$ and $\infty$ and shifted parameter $\tilde{\boldsymbol\kappa}=(\kappa_0-1,\kappa_1,\kappa_t,\kappa_\infty)$. The resulting bundle is either the trivial bundle $\mathbb F_0$, or the Hirzebruch surface $\mathbb F_2$.
However, after two (or more generally, an even number of) such 
elementary transformations, we are back to $\mathbb F_1$.
Indeed, since the type $n$ of 
the Hirzebruch surface $\mathbb F_n$ shifts by $\pm 1$ at each elementary transformation, we just have to exclude the possibility, say
$$\begin{matrix}
(\mathbb F_1,\mathcal F)&\dashrightarrow&(\mathbb F_2,\mathcal F')&\dashrightarrow&(\mathbb F_3,\mathcal F'')\\
&\mathrm{elm}_{s_0}&&\mathrm{elm}_{s_\infty}&\end{matrix}$$ 
this would mean that each of the two successive elementary transformations have center lying on the negative section.
In this later case, the negative section $\sigma_{-3}$ of $\mathbb F_3$
is the strict transform of $\sigma_{-1}$: since $\sigma_{-1}$ is not 
$\mathcal F$-invariant, we get that $\sigma_{-3}$ is not 
$\mathcal F''$-invariant, but Proposition \ref{prop:BrunellaSection}
gives a negative tangency, a contradiction.
We have thus defined a biregular transformation
$$\mathrm{elm}_{s_\infty}\circ\mathrm{elm}_{s_0}:\ \ \ \left\{\begin{matrix}(\kappa_0,\kappa_1,\kappa_t,\kappa_\infty)&\mapsto&(\kappa_0-1,\kappa_1,\kappa_t,\kappa_\infty-1)\\
t&\mapsto& t\\
(q,p)&\mapsto&(\tilde q,\tilde p)\end{matrix}\right.$$
We omit the huge formula but we note that $\tilde q$
is given by the unique tangency point between $\mathcal F$
and the unique section $\sigma$ of $\mathbb F_1$ having 
$+1$ self-intersection and passing through $s_0$ and $s_\infty$.

More generally, given any $4$-uple 
$$\boldsymbol{n}=(n_0,n_1,n_t,n_\infty)\in\Z^4,\ \ \ n=n_0+n_1+n_t+n_\infty\in2\Z,$$
we construct a biregular transformation
$$\mathrm{elm}_{\boldsymbol{l}}^{\boldsymbol{n}}:\ \ \ \left\{\begin{matrix}(\kappa_0,\kappa_1,\kappa_t,\kappa_\infty)&\mapsto&(\kappa_0-n_0,\kappa_1-n_1,\kappa_t-n_t,\kappa_\infty-n_\infty)\\
t&\mapsto& t\\
(q,p)&\mapsto&(\tilde q,\tilde p)\end{matrix}\right.$$
where
$$\mathrm{elm}_{\boldsymbol{l}}^{\boldsymbol{n}}=
\mathrm{elm}_{s_0}^{n_0}\circ\mathrm{elm}_{s_1}^{n_1}\circ
\mathrm{elm}_{s_t}^{n_t}\circ\mathrm{elm}_{s_\infty}^{n_\infty}$$
with convention 
$\mathrm{elm}_{s_i}^{n_i}:=\mathrm{elm}_{s_i'}^{-n_i}$
when $n_i<0$.
Like before, there is a unique section $\sigma$ having self-intersection
$\sigma\cdot\sigma=n-1$ and tangency of multiplicity $n_i$
with the foliation at each point $s_i$; the extra tangency 
between $\mathcal F$ and $\sigma$ is at $x=\tilde q$.

We now get an infinite affine group of transformations
that we denote by $H$.

\subsubsection{The Okamoto symetry}
An extra symetry is needed to generate the full group $G$
of biregular transformations decribed in \cite{Okamoto2},
namely
$$\left\{\begin{matrix}(\kappa_0,\kappa_1,\kappa_t,\kappa_\infty)&\mapsto&(\kappa_0+\rho,\kappa_1+\rho,\kappa_t+\rho,\kappa_\infty+\rho)\\
t&\mapsto& t\\
(q,p)&\mapsto&(q+\frac{\rho}{p},p)\end{matrix}\right.$$
(called $s_2$ in \cite{NoumiYamada})
or any of its conjugate. So far, there is no geometric interpretation 
of this symetry as long as we interpret $\mathcal M^{\boldsymbol\kappa}$ as the moduli space of rank $2$ connections (or Riccati
foliations). One has to deal with isomonodromic deformations
of connections of rank $3$ (see \cite{Boalch}) or more (see \cite{NoumiYamada}) in order to derive the full Okamoto
group by natural transformations on the connection.

The conjugate of the Okamoto symetry above by 
the sign change $(+,+,+,-)$, denoted $s_2s_1s_2$ in notations of \cite{NoumiYamada}, is given by:
$$\left\{\begin{matrix}(\kappa_0,\kappa_1,\kappa_t,\kappa_\infty)&\mapsto&(\kappa_0+\rho+\kappa_\infty,\kappa_1+\rho+\kappa_\infty,\kappa_t+\rho+\kappa_\infty,-\rho)\\
t&\mapsto& t\\
(q,p)&\mapsto&(q+\frac{\rho+\kappa_\infty}{p},p)\end{matrix}\right.$$
We recognize in $\tilde q=q+\frac{\rho+\kappa_\infty}{p}$ the Tu invariant of the underlying bundle
of the elliptic pull-back (see Corollary \ref{cor:TuInvariantGeneric}).

\subsection{Special configurations}

For special values of the parameter $\boldsymbol\kappa$,
the moduli space $\mathcal M^{\boldsymbol\kappa}_t$
has rational complete curves (independently on $t$).
They arise from curves on $\mathbb F_2$ avoiding
the negative section and passing through $s_i$
or $s_i'$ over each value $q=0,1,t,\infty$.
They correspond to the locus either of reducible connections,
or connections with a apparent singular point.
It turns out that there are no other complete curve 
in $\mathcal M^{\boldsymbol\kappa}_t$ (see \cite{IIS}),
in particular, no curve for generic values of 
$\boldsymbol\kappa$. Here follow some examples.

When say $\kappa_t=1$ and the foliation $\mathcal F$
has an apparent singular point at $x=t$, then the pole
disappear after one elementary transformation with center $s_t$.
We thus obtain a Hypergeometric type Riccati foliation $\mathcal F_0$
on $\mathbb F_0$ (it cannot be $\mathbb F_1$ by Proposition \ref{prop:BrunellaSection}), that is to say with poles at $0$, $1$
and $\infty$, and exponents $(\kappa_0,\kappa_1,\kappa_\infty)$.
Conversely, given $\mathcal F_0$, we reconstruct a foliation
$\mathbb F$ like above by applying an elementary tranformation
at any point of the fibre $x=t$: this gives us a rational family of
foliations $\mathcal F$ (parametrized by the fibre $x=t$).
The corresponding rational curve $C$ in the moduli space $\mathcal M^{\boldsymbol\kappa}_t$ is given by equation
$$q(q-1)p^2-((q-1)\kappa_0+q\kappa1)p+\frac{(\kappa_0+\kappa_1)^2-\kappa_\infty^2}{4}.$$
This is the unique curve $C$ in $\mathbb F_2$ satisfying:
\begin{itemize}
\item $q:C\to\P^1$ has degree $2$,
\item $C$ does not intersect the negative section $\sigma_{-2}$,
\item $C$ intersects the fibre $q=i$ at both $s_i$ and $s_i'$ 
for $i=0,1,\infty$, 
\item $C$ intersects twice the fibre $q=t$ at $s_t$, 
\item $C$ is singular at $s_t$: it has two smooth branches.
\end{itemize}

\begin{proof}To compute this family in the moduli space, we start from
the Riccati foliation $\mathcal F_0$ that can be normalized to
$$\frac{dy}{dx}=\frac{-y^2-(\kappa_0+x\kappa_\infty)y+cx}{x(x-1)},\ \ \ 
c=\frac{\kappa_1^2-(\kappa_0+\kappa_\infty)^2}{4}$$
(we exclude some reducible cases here) and choose a parabolic
structure $s_t=(t,\lambda)$ over $x=t$. The horizontal section
$y=\lambda$ is sent by the elementary transformation 
$\mathrm{elm}_{s_t}:\mathbb F_0\dashrightarrow\mathbb F_1$ to the negative section: the corresponding value $q=\frac{\lambda^2+\kappa_0\lambda}{c-\kappa_\infty\lambda}$
corresponds to the unique tangency point between $\mathcal F_0$ and $y=\infty$; we already note that the map 
$\lambda\mapsto q(\lambda)$ has degree $2$. Now, 
we observe that the foliation $\mathcal F=\mathrm{elm}_{s_t}\mathcal F_0$ defines a point over $s_0$ (resp. $s_0'$) in the moduli space 
$\mathcal M^{\boldsymbol\kappa}_t$ if, and only if,
the parabolic structure $s_t=(t,\lambda)$ and and the singular point $s_0'$ (resp. $s_0$) of $\mathcal F_0$ lie on the same horizontal
section, namely $y=\lambda$. Indeed, the later section 
is transformed by $\mathrm{elm}_{s_t}$ into the negative section
of $\mathbb F_1$. Thus the curve $C$ passes once through each
point $s_0$ and $s_0'$. The same holds over $q=1$ and $\infty$.
Finally, when the singular point $s_t$ is apparent for $\mathcal F_1$,
it cannot lie on the negative section $\sigma_{-1}$; overwise, after applying $\mathrm{elm}_{s_t}$, we would obtain a hypergeometric foliation on $\mathbb F_2$ having, by Proposition \ref{prop:BrunellaSection},
$-1$ tangencies with the negative section, impossible.
Now we can compute the equation of the curve.
Since $C$ has degree $2$ and does not intersect the negative section of $\mathbb F_2$, 
its equation takes the form
$P^2+A(q)P+B(q)$ where polynomials $A$ and $B$ have
respective degree $2$ and $4$. The fact that $C$ passes through
all points $s_i$ and $s_i'$ but $s_t'$ does not define $C$ 
but a pencil of curves; they are smooth and vertical at $s_t$
(and thus escape from $\mathcal M^{\boldsymbol\kappa}_t$
over this point) except one of them which is singular with normal crossing at $s_t$.
\end{proof}

When $\kappa_t=2$, foliations $\mathcal F$ with apparent singular points at $x=t$ are obtained as follows. Take the hypergeometric
foliation $\mathcal F_0$ on $\mathbb F_1$ with exponents
$(\kappa_0,\kappa_1,\kappa_\infty)$, choose a parabolic structure 
$s_t$ at $x=0$ and apply twice $\mathrm{elm}_{s_t}$. 
Again, we get a rational curve in the moduli space which projects
down to a curve $C\subset\mathbb F_2$ satisfying:
\begin{itemize}
\item $q:C\to\P^1$ has degree $4$,
\item $C$ does not intersect the negative section $\sigma_{-2}$,
\item $C$ intersects the fibre $q=i$ at both $s_i$ and $s_i'$ 
for $i=0,1,\infty$, 
\item $C$ intersects the fibre $q=t$ three times at $s_t$, one time at $s_t'$.
\end{itemize}

\subsection{Bolibrukh-Heu transversality}\label{sec:Viktoria}

A remarkable result of Bolibrukh \cite[page 37]{Bolibrukh} asserts, in our context, that the isomonodromic deformation 
$t\mapsto(E_t,\nabla_t)\in\mathcal M^{\boldsymbol\theta}_t$
of an irreducible $\mathrm{sl}(2,\C)$-connection is ``mostly'' defined on the trivial bundle:

\begin{thm}[Bolibrukh]\label{thm:Bolibrukh}
Let $t\mapsto(E_t,\nabla_t)\in\mathcal M^{\boldsymbol\theta}_t$ be a local isomonodromic deformation.
Then we are in one of the following cases:
\begin{itemize}
\item outside a discrete set of the parameter space $T$, the underlying bundle $E_t$ is trivial,
\item $E_t\equiv\mathcal O(-1)\oplus\mathcal O(1)$ and the destabilizing subsheaf $\mathcal O(1)$ is $\nabla_t$-invariant.
\end{itemize}
\end{thm}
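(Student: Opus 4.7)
\medskip

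\noindent\textbf{Proof proposal.} The plan is to prove the dichotomy by analyzing how an isomonodromic leaf sits with respect to the non-trivial-bundle locus $\Theta \subset \mathcal{M}^{\boldsymbol\theta}$, using transversality of the Painlevé foliation and then, in the non-generic case, exploiting elementary transformations to pin down the invariance of $L_t$.

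First I would set up the divisor $\Theta$. By Grothendieck's theorem, any rank $2$ vector bundle on $\mathbb{P}^1$ with trivial determinant has the form $\mathcal{O}(-k) \oplus \mathcal{O}(k)$ for some integer $k \geq 0$. Applying Lemma \ref{lem:Brunella} with $g = 0$, $\deg D = 4$ and $L = \mathcal{O}(k)$, we get $\nu = 2 - 2k$, so whenever $k \geq 2$ the destabilizing subsheaf $\mathcal{O}(k)$ is automatically $\nabla$-invariant. By the identification in section \ref{sec:ModuliSpaceSL2}, the locus $\Theta \subset \mathcal{M}^{\boldsymbol\theta}$ where the underlying bundle is non-trivial coincides with the exceptional divisor $S_\infty$ above $s_\infty$; it is irreducible of codimension one, its generic point parametrizes $\mathcal{O}(-1) \oplus \mathcal{O}(1)$, and the strata with $k \geq 2$ have codimension at least two inside $\mathcal{M}^{\boldsymbol\theta}$.

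Next I would invoke the transversality dichotomy. The isomonodromy flow defines a non-singular holomorphic foliation $\mathcal{F}$ on $\mathcal{M}^{\boldsymbol\theta}$ which is transverse to the projection $t: \mathcal{M}^{\boldsymbol\theta} \to T$; its leaves are one-dimensional graphs over $T$. For such a leaf $\mathcal{L}$, the intersection $\mathcal{L} \cap \Theta$ is analytic, hence either discrete or equal to $\mathcal{L}$. In the first case $(E_t, \nabla_t) \notin \Theta$ off a discrete subset of $T$, and the first alternative of the theorem is immediate. So assume $\mathcal{L} \subset \Theta$; since a one-dimensional leaf cannot lie in the codimension-$\geq 2$ strata, we have $E_t \simeq \mathcal{O}(-1) \oplus \mathcal{O}(1)$ at every point of $\mathcal{L}$. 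The unique destabilizing sub-line-bundle $L_t := \mathcal{O}_{X_t}(1) \subset E_t$ is canonical and assembles into a line sub-bundle $\boldsymbol L \subset \boldsymbol E$ of the universal isomonodromic bundle over a neighborhood of $\mathcal{L}$. The second fundamental form
\[
\phi_t \,:\, L_t \,\hookrightarrow\, E_t \,\xrightarrow{\nabla_t}\, E_t \otimes \Omega^1(D_t) \,\twoheadrightarrow\, (E_t / L_t) \otimes \Omega^1(D_t) \,\simeq\, \mathcal{O}_{\mathbb{P}^1}
\]
reduces on each fiber to a constant $\phi(t) \in \mathbb{C}$, which vanishes exactly when $L_t$ is $\nabla_t$-invariant, and the remaining task is to prove $\phi \equiv 0$.

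The main obstacle is ruling out $\phi(t_0) \neq 0$ for some $t_0 \in \mathcal{L}$. If this held, Lemma \ref{lem:Brunella} (with the bound $\nu = 0$ saturated) would give that $L_{t_0}|_{x = i}$ is distinct from every eigenline of $\mathrm{Res}_{x = i}(\nabla_{t_0})$ at each of the four poles. I would then apply a composition of positive elementary transformations centered at these eigenlines (followed by a suitable line-bundle twist to restore trace-freeness): since each chosen center is transverse to $L_{t_0}$, the elementary transformation decreases the degree of the $\mathcal{O}(1)$-factor and yields a new $\mathrm{sl}(2,\mathbb C)$-connection on the \emph{trivial} bundle. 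The eigenlines and $L$ vary analytically with $t$ and the transversality persists on a neighborhood of $t_0$ in $\mathcal L$; moreover elementary transformations commute with isomonodromic deformation up to a shift of exponents. This produces an isomonodromic family with trivial underlying bundle over a neighborhood of $t_0$; inverting the elementary transformation then forces $E_t$ to stay off $\Theta$ nearby, contradicting $\mathcal{L} \subset \Theta$. The hard part is the careful degree bookkeeping that guarantees the four elementary transformations centered transverse to $L$ produce exactly the trivial bundle (no exceptional jump up to $\mathcal{O}(-2) \oplus \mathcal{O}(2)$), and that the inverse transformation behaves as expected; this is essentially the content of the blow-up description of $\Theta$ inside $\overline{\mathcal M^{\boldsymbol\kappa}_t}$ from section \ref{sec:ModuliSpaceSL2}, and can alternatively be checked by a direct local computation of the Hamiltonian vector field near the exceptional divisor $S_\infty$, verifying that its transverse component to $S_\infty$ is proportional to $\phi$. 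Once $\phi \equiv 0$ is established, $L_t$ is $\nabla_t$-invariant along $\mathcal{L}$, giving the second alternative.
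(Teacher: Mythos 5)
First, a caveat: the paper contains no proof of this statement --- it is quoted from \cite{Bolibrukh} and \cite{Heu} --- so there is nothing in the text to compare your argument with, and I am judging it on its own. Your reduction is the right one: the alternative ``the leaf meets $\Theta$ in a discrete set or lies inside it'' is immediate from analyticity, and in the second case the problem is correctly reduced to showing that the second fundamental form $\phi_t\in H^0(\P^1,\mathcal O)\simeq\C$ of $L_t=\mathcal O(1)$ vanishes identically. The gap is in how you rule out $\phi(t_0)\neq 0$, and it is twofold. (a) The four positive elementary transformations centred at eigenlines transverse to $L_{t_0}$ do \emph{not} automatically trivialize the bundle: by the same section count as in Proposition \ref{P:ElmTransfRuled}, the result is trivial or again $\mathcal O(-1)\oplus\mathcal O(1)$, according to whether some $+2$-section of $\P(E_{t_0})$ passes through all four centres, and nothing you say excludes the latter (you flag this yourself but do not resolve it). (b) More seriously, even granting that $E''_{t_0}$ is trivial, hence $E''_t$ trivial nearby by semicontinuity, no contradiction follows: four inverse elementary transformations applied to the trivial bundle can perfectly well return $\mathcal O(-1)\oplus\mathcal O(1)$ --- whether they do depends on the configuration of the four centres inside $E''_t$, and your standing hypothesis $\mathcal L\subset\Theta$ precisely says this configuration stays special. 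To extract a contradiction you would have to show that the isomonodromy flow moves the configuration off the special locus, which is exactly the statement being proved. Notice that the flatness of the universal connection in the $t$-direction is never actually used in your main line of argument, whereas the statement is false for non-isomonodromic families (deform an irreducible connection on $\mathcal O(-1)\oplus\mathcal O(1)$ arbitrarily); so an essential ingredient must be missing.

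The missing ingredient is cohomological and is the heart of the Bolibrukh--Heu argument. Glue the destabilizing subbundles $L_t$ into a sub-line bundle $\boldsymbol L\subset\boldsymbol E$ over the total space $\boldsymbol X\to\Delta$ (which you do), and consider the second fundamental form of $\boldsymbol L$ with respect to the \emph{full} logarithmic connection $\nabla:\boldsymbol E\to\boldsymbol E\otimes\Omega^1_{\boldsymbol X}(\log\boldsymbol D)$, not only its relative part. Restricted to a fibre $X_t$ and twisted by $L_t^{-1}$, it is a global section of an extension of $\mathcal O_{\P^1}$ (the relative part, whose value is $\phi_t$) by $\mathcal O_{\P^1}(-2)$ (the $dt$-part). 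The connecting homomorphism $H^0(\mathcal O)\to H^1(\mathcal O(-2))$ is cup product with the Kodaira--Spencer class of the family of four-pointed spheres, which is nonzero because $t$ genuinely moves the cross-ratio of the poles; hence the extension has no nonzero section, the full second fundamental form vanishes, and in particular $\phi_t=0$. This is where isomonodromy and the nontriviality of the deformation of the pointed curve enter. Your closing aside --- that the transverse component of the Hamiltonian vector field along $S_\infty$ should be proportional to $\phi$ --- is indeed an equivalent and viable route, but it is left entirely unexecuted, and it, rather than the elementary-transformation bookkeeping, is the actual content of the theorem.
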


In particular, when $(E_t,\nabla_t)$ is irreducible, we are in the former case. Bolibrukh proved something more general for 
certain logarithmic connections of arbitrary rank on the Riemann sphere. The rank $2$ case has been 
extended in full generality,  to regular/irregular $\mathrm{sl}(2,\C)$-connections, on arbitrary Riemann surfaces,
by V. Heu in \cite{Heu}. We will use the following Corollary

\begin{prop}Let $t\mapsto(E_t,\nabla_t)\in\mathcal M^{\boldsymbol\theta}_t$ be a local isomonodromic deformation
of a $\mathrm{sl}(2,\C)$-connection. Assume that $E_t$ is trivial and two eigenlines $l_i$ and $l_j$ coincide 
along the deformation, $i,j\in\{0,1,t,\infty\}$. Then $(E_t,\nabla_t)$ is reducible: the constant line bundle $L_t\subset E_t$ defined by $l_i=l_j$
is $\nabla_t$-invariant.
\end{prop}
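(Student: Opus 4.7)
My plan is to exploit the flatness of the connection extended to the total space, together with an explicit Schlesinger-type bracket computation, in order to show that the vanishing of two of the four $(2,1)$-entries of the residue matrices forces the vanishing of all four, whence $\boldsymbol\nabla$-invariance of $L_0$.

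First I would extend $(E_t,\nabla_t)$ to a flat logarithmic connection $(\boldsymbol E,\boldsymbol\nabla)$ on the total space $\boldsymbol X=\P^1\times T$ following Deligne. As $E_t$ is trivial throughout the deformation, I may trivialize $\boldsymbol E\simeq \mathcal O_{\boldsymbol X}^{\oplus 2}$ so that the constant line $L_0=l_i=l_j$ is spanned by the first basis vector $e_1$. In Fuchsian form $\boldsymbol\nabla=d+\sum_k\frac{A_k(t)}{x-a_k(t)}\,dx+B(x,t)\,dt$ with only $a_t=t$ moving, the eigenline condition $A_i e_1=-(\theta_i/2)e_1$ forces $A_i$ to be upper-triangular in this basis, so the hypothesis reads $(A_i)_{21}(t)\equiv 0\equiv(A_j)_{21}(t)$ identically in $t$.

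Next I would differentiate these identities using the Schlesinger equations (equivalent to flatness of $\boldsymbol\nabla$): $\dot A_k=[A_k,A_t]/(a_k-t)$ for $k\ne t$, and $\dot A_t$ the negative of the sum. A direct matrix computation shows that whenever $A_i$ is upper-triangular with diagonal $\mathrm{diag}(-\theta_i/2,\theta_i/2)$,
\[
[A_i,A_k]_{21}=\theta_i\cdot(A_k)_{21}\qquad\text{for any }A_k\in\mathrm{sl}(2,\C).
\]
Therefore $(A_i)_{21}(t)\equiv 0$ forces $\theta_i\cdot(A_k)_{21}=0$ for the third residue $A_k$, and the analogue with $j$ in place of $i$ gives $\theta_j\cdot(A_k)_{21}=0$.

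In the generic case where at least one of $\theta_i,\theta_j$ is non-zero, this yields $(A_k)_{21}=0$ for the third residue; the fourth is then upper-triangular by the residue constraint $A_0+A_1+A_t+A_\infty=0$. All four residues being upper-triangular means $L_0$ is globally $\boldsymbol\nabla$-invariant, so $(E_t,\nabla_t)$ is reducible with invariant subbundle $L_t$, as claimed. The main obstacle is the degenerate case $\theta_i=\theta_j=0$ with $A_i,A_j$ nilpotent upper-triangular: the $(2,1)$-relations become vacuous and one must instead extract the required vanishing from the $(1,1)$-entry of the Schlesinger equation for $A_i$, which reads $\beta_i\cdot(A_k)_{21}=0$ with $\beta_i$ the top-right entry of $A_i$; as long as $A_i\ne 0$ the same conclusion follows. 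The fully apparent case where a coinciding residue vanishes identically lies outside the non-trivial content of the proposition, since the parabolic line is then a free choice rather than a genuine eigenline condition.
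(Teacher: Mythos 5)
Your strategy --- differentiating the upper-triangularity of the residues along the Schlesinger flow --- is genuinely different from the paper's proof, which simply applies $\mathrm{elm}_{s_i}\circ\mathrm{elm}_{s_j}$ to the deformation (dropping the self-intersection of the section carrying $l_i=l_j$ from $0$ to $-2$), obtains an isomonodromic deformation on $\mathbb F_2$ for \emph{every} $t$, and invokes Bolibrukh--Heu transversality (Theorem \ref{thm:Bolibrukh}) to force reducibility. Your bracket identity $[A_i,A_k]_{21}=\theta_i (A_k)_{21}$ for upper-triangular $A_i$ is correct, but the argument as written has a genuine gap coming from a gauge incompatibility. The Schlesinger equations $\dot A_k=[A_k,A_t]/(a_k-t)$ hold in the trivialization of $\boldsymbol E$ in which the $dt$-part of the flat connection is $-\frac{A_t}{x-t}\,dt$ with no further term; in that gauge the eigenlines $l_i(t)$ are in general \emph{not} constant (the flow conjugates each $A_k$ by a $t$-dependent matrix), so you cannot simultaneously assume $l_i(t)\equiv\C e_1$ and use the equations in this form. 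In the trivialization where the coinciding line really is the constant $\C e_1$, the $dt$-part acquires an extra matrix $C(t)$ and the deformation equations become $\dot A_k=[A_k,A_t]/(a_k-t)+[A_k,C]$; your computation then only yields $\theta_i\bigl(\tfrac{(A_t)_{21}}{a_i-t}+C_{21}\bigr)=0$, one equation in the two unknowns $(A_t)_{21}$ and $C_{21}$. One genuinely needs \emph{both} hypotheses $i$ and $j$ to eliminate $C_{21}$ and conclude $(A_t)_{21}=0$, so your claim that a single non-vanishing $\theta_i$ already gives the conclusion is incorrect, and the same defect propagates to your nilpotent fix.

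A second, related gap: the deduction ``$\theta_i(A_k)_{21}=0$ for the third residue $A_k$'' silently identifies the third residue with $A_t$ --- the only bracket the flow hands you is with the residue at the moving pole. This happens to suffice when $t\notin\{i,j\}$ (the fourth residue then follows from $\sum_k(A_k)_{21}=0$), but when $t\in\{i,j\}$ the identity $\theta_i(A_t)_{21}=0$ is vacuous and one must instead differentiate $(A_t)_{21}\equiv 0$ via $\dot A_t=-\sum_{k\neq t}[A_t,A_k]/(t-a_k)$ and combine with the residue relation; and when $\infty\in\{i,j\}$ there is no evolution equation for $A_\infty$ at all in your normalization. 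These cases are patchable but absent, whereas the paper's elementary-transformation argument treats all positions of $i,j$ uniformly and needs no computation.
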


\begin{proof}After applying $\elm_{s_i}\circ\elm_{s_j}$ to the deformation, we get an isomonodromic deformation 
on the Hirzebruch surface $\mathbb F_2$ which is possible only when the connection is reducible by Bolibrukh Theorem \ref{thm:Bolibrukh}.
\end{proof}

\begin{remark}We have the following more general result. Consider an irreducible Riccati foliation $\mathcal F_0$ in $\mathcal M^{\boldsymbol\theta}_t$
defined on the trivial bundle $\P^1\times\P^1$. Sections $\sigma_d$ having self-intersection $d\ge0$ ($d$ even) form a $(d+1)$-dimensional family. 
The smooth curve $\sigma_d$ has exactly $d+2$ tangencies with $\mathcal F_0$ counted with multiplicities. Then the tangency locus
can be totally contained in fibers over $\{0,1,t,\infty\}$ only at isolated points of the parameter space $T$. For $d=0$ we recover the proposition.
For $d=2$ we get for instance all $4$ parabolics cannot lie along a bidegree $(1,1)$ curve along the deformation.
\end{remark}


\section{Lam\'e connections}\label{sec:LameConnection}

The aim of this section is to roughly describe the moduli space
of Lam\'e connections up to biregular bundle transformations 
by means of the Riemann-Hilbert correspondence. 
Here, we fix the elliptic 
curve 
$$ X:\{y^2=x(x-1)(x-t)\}.$$
We will point out
which Lam\'e connections are invariant under the elliptic involution 
$$\sigma\ :\ X\to X\ ;\ (x,y)\mapsto(x,-y).$$
In the next section, we will see that $\sigma$-invariant Lam\'e connections
can be pushed down, via the double cover 
$$\pi:X\to\P^1\ ;\  (x,y)\mapsto x$$
as a logarithmic connection with poles at the $4$
ramification points $i=0,1,t,\infty$.

Let $(E,\nabla)$ be a Lam\'e connection over the elliptic
curve $X$, thus having a simple pole at $\omega_\infty$.
When the exponent $\vartheta$ is not an integer,
the connection may
be reduced to the following matrix form
$$\nabla\ :\ W\mapsto dW-\Omega W,\ \ \ \Omega=\begin{pmatrix}\frac{\vartheta}{2}\frac{dz}{z}&0\\
0&-\frac{\vartheta}{2}\frac{dz}{z}\end{pmatrix}$$
where $z\in(\C,0)$ is any local coordinate of $X$ 
at $\omega_\infty$, and $W\in\C^2$, a convenient 
local holomorphic trivialization of $E$.
On the other hand, when $\vartheta\in\Z$, say $\vartheta=n\in\Z_{\le0}$, 
the pole is {\bf resonant} and the matrix form may be reduced
(by local gauge transformation as above) to
\begin{equation}\label{PoincareDulac}
\text{either}\ \ \ \Omega=\begin{pmatrix}\frac{n}{2}&0\\
0&-\frac{n}{2}\end{pmatrix}\frac{dz}{z},\ \ \ \text{or}\ \ \ 
\Omega=\begin{pmatrix}\frac{n}{2}&z^n\\
0&-\frac{n}{2}\end{pmatrix}\frac{dz}{z}.
\end{equation}
The point $\omega_\infty$ is an {\bf apparent} singular point 
for $\nabla$
in the former case (actually regular when $n=0$), and a
{\bf logarithmic} singular point in the latter case.

The connection $\nabla$ is regular over the affine part
$X^*=X-\{\omega_\infty\}$ of the curve,
and we inherit a {\bf monodromy representation} 
$$\rho\ :\ \pi_1(X^*)\to\mathrm{SL}(2,\C)$$
which is well defined by $(E,\nabla)$ up to $\mathrm{SL}(2,\C)$-conjugacy. Fix a loop $\delta\in\pi_1(X^*)$ going to 
$\omega_\infty$,
turning once around, and coming back to the base point;
then $\rho(\delta)$ is the {\bf local monodromy} of $(E,\nabla)$
{\bf around} $\omega_\infty$ and is 
conjugated to 
$$\begin{pmatrix}e^{i\pi\vartheta}&0\\ 0&e^{-i\pi\vartheta}\end{pmatrix}\ \ \ \left(\text{resp.}\ \pm\begin{pmatrix}1&1\\ 0&1\end{pmatrix}\ \text{in the logarithmic case}\right).$$
All of this obviously does not depend on the choice of the base point
for the fundamental group. We note that the singular point 
$\omega_\infty$ is an apparent singular point if, and only if, 
the local monodromy $\rho(\delta)$ is $\pm I$, 
the center of $\mathrm{SL}(2,\C)$; 
this can occur only when $\vartheta\in\Z^*$.

\subsection{Riemann-Hilbert correspondence}
For each 
exponent
$\vartheta\in\C$, we get an analytic map
$$\begin{matrix}&RH&\\
\left\{\begin{matrix}\text{Lam\'e connections over }X\\ 
\text{with exponent }\vartheta\end{matrix}\right\}/\sim     &\to&
\left\{\begin{matrix}\rho:\pi_1(X^*)\to\mathrm{SL}(2,\C)\\ 
\mathrm{trace}(\rho(\delta))=2\cos(\pi\vartheta)\end{matrix}\right\}/\sim    \\
(E,\nabla)&\mapsto&\rho\end{matrix}$$
which assigns to a Lam\'e connection, up to holomorphic bundle isomorphisms, the corresponding monodromy representation,
up to $\mathrm{SL}(2,\C)$-conjugacy. 
This map is almost one-to-one: it is surjective, and
it is injective in restriction to those connections without
apparent singular point, i.e. such that the local monodromy
is $\rho(\delta)\not=\pm I$. 

When $(E,\nabla)$ has an apparent singular point at $\omega_\infty$,
say $\vartheta=n\in\Z_{>0}$, all horizontal sections have
meromorphic extension at $\omega_\infty$; those holomorphic ones
are contained in a sub-line-bundle of $E$ near $\omega_\infty$,
say $L$; through the normal form (\ref{PoincareDulac}),
$L$ is the constant line bundle generated by 
$\begin{pmatrix}1\\0\end{pmatrix}$. 
Except in very special cases, $L$ does not extends
as a line bundle $L\subset E$ on the whole of $X$: it is only defined
at the neighborhood of $\omega_\infty$. The fiber of $L$ 
at $\omega_\infty$ coincides with the eigenline of the residual
matrix associated to the positive eigenvalue $\frac{n}{2}$.
By the Riemann-Hilbert 
correspondence over the punctured curve $X^*$, any two Lam\'e
connections with the same monodromy representation are conjugated
by a gauge transformation over $X^*$; the conjugacy extends
as a global gauge transformation if, and only if, it conjugates 
the corresponding local line bundles as defined above.
One can restore the injectivity of the Riemann-Hilbert map
in the following way. Consider the monodromy representation $\rho$
as an action of the fundamental group $\pi_1(X^*,p)$
on the space $E_p\simeq\C^2$ of germs of solutions at the base 
point $p$; now, dragging back, by analytic continuation along (half-)$\delta$, the local holomorphic solutions at $\omega_\infty$ until the base point $p$,
we get a one dimensional subspace $L_p\subset E_p$.
In other words, $L_p$ is obtained by analytic continuation of $L$ 
(as a $\nabla$-invariant line bundle) along $\delta$.
The Lam\'e connection (with apparent singular point) is characterized
by the pair 
$$(\rho,L_p)\in\mathrm{Hom}\left(\pi_1(X^*,p),SL(E_p)\right)\times\P(E_p)$$
up to conjugacy:
$$(\rho,L_p)\sim(M^{-1}\rho M,M^{-1}L_p),\ \ \ 
M\in\mathrm{SL}(2,\C).$$
This is a kind of parabolic structure for the space of representations.

\subsection{Fricke moduli space}
Let us first recall how to describe the moduli space 
of representations following Fricke (see \cite{Goldman}).
Fix standart generators $\alpha,\beta\in\pi_1(X^*)$ of the fundamental group so that the commutator $\delta:=[\alpha,\beta]=\alpha\beta\alpha^{-1}\beta^{-1}$ represents a small loop turning once around the puncture as before. We neglect the base point as it will play
no role in our discussion. A representation $\rho$ is determined
by the images of generators 
$$A:=\rho(\alpha)\ \ \ \text{and}\ \ \ B:=\rho(\beta).$$
The ring of polynomial functions on 
$\mathrm{SL}(2,\C)\times\mathrm{SL}(2,\C)$ 
that are invariant under the $\mathrm{SL}(2,\C)$-conjugacy 
action is generated by 
$$a:=\tr(A),\ \ \ b:=\tr(B)\ \ \ \text{and}\ \ \ c:=\tr(AB);$$
for instance, the trace of the commutator $\delta=[\alpha,\beta]$ is given by
$$d:=\tr([A,B])=a^2+b^2+c^2-abc-2.$$
and we have
$$\rho\ \text{is reducible}\ \Leftrightarrow\ a^2+b^2+c^2-abc-2=2.$$
The geometric quotient of $\mathrm{Hom}(\pi_1(X_t^*),\mathrm{SL}(2,\C))$ by the $\mathrm{SL}(2,\C)$-conjugacy 
action identifies with $\C^3$ via the composition
$$\begin{matrix}
\mathrm{Hom}(\pi_1(X^*),\mathrm{SL}(2,\C))&\to&
\mathrm{SL}(2,\C)\times\mathrm{SL}(2,\C)&\to&\C^3\\
\rho&\mapsto&(A,B)&\mapsto&(a,b,c)\end{matrix}$$
Precisely (see \cite{Churchill,Goldman}), when $a^2+b^2+c^2-abc-2\not=2$,
the fibre over $(a,b,c)$
consists in the single $\mathrm{SL}(2,\C)$-conjugacy
class of the irreducible representation defined by
$$A=\begin{pmatrix}a&-1\\ 1&0\end{pmatrix}\ \ \ \text{and}\ \ \ 
B=\begin{pmatrix}0&\gamma^{-1}\\ -\gamma&b\end{pmatrix}\ \ \ 
\text{with}\ \ \ \gamma+\gamma^{-1}=c.$$
This normal form is obtained in any basis of the form $(v,-\gamma B.v)$ where $v$ 
is an eigenvector for the product $A.B$ with eigenvalue $\gamma$;
it only depends on the choice of the root $\gamma$.
The commutator is therefore given by 
$$[A,B]=\begin{pmatrix}
-\frac{2\gamma^2+1}{\gamma^2}&\frac{a-b\gamma}{\gamma^2}\\ 
\frac{a\gamma-b}{\gamma}&\frac{1}{\gamma^2}\end{pmatrix}.
$$
One can check by direct computation that matrices $A$
and $B$ above share a common eigenvector if, and only if,
$d=2$.

\begin{cor}A Lam\'e connection is reducible if, and only if,
$\vartheta\in\Z$.
\end{cor}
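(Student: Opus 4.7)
My plan is to combine the Riemann--Hilbert correspondence recalled at the beginning of this section with the Fricke trace identity displayed just above the corollary. First, I would identify reducibility of the Lam\'e connection $(E,\nabla)$ --- existence of a $\nabla$-invariant sub-line-bundle $L\subset E$ --- with reducibility of its monodromy representation $\rho:\pi_1(X^*)\to\mathrm{SL}(2,\C)$, meaning that $A:=\rho(\alpha)$ and $B:=\rho(\beta)$ share a common eigenvector in $\C^2$. In the non-apparent case this is directly the content of the Riemann--Hilbert bijection recalled above; in the apparent case one has to check that the extra parabolic datum $L_p$ is compatible with any $\rho$-invariant line, which is a local statement that reduces to the resonant normal form \eqref{PoincareDulac}.

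Next, I would read off the trace of the local monodromy $\rho(\delta)=[A,B]$ directly from the residue of $\nabla$ at $\omega_\infty$: since the eigenvalues of that residue are $\pm\vartheta/2$, one has
\[
d:=\tr(\rho(\delta))=2\cos(\pi\vartheta).
\]
Substituting this into the Fricke criterion stated just before the corollary (namely, $A,B$ share a common eigenvector if and only if $d=2$), I would conclude that $(E,\nabla)$ is reducible if and only if $\cos(\pi\vartheta)=1$, i.e.\ $\vartheta\in\Z$.

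For the converse direction --- that every integer $\vartheta$ is indeed realized by some reducible Lam\'e connection --- I would write down the explicit diagonal example: pick a line bundle $L$ on $X$ of degree $-\vartheta/2$ and equip $E=L\oplus L^{-1}$ with the diagonal logarithmic connection whose residue at $\omega_\infty$ is $\mathrm{diag}(\vartheta/2,-\vartheta/2)$; existence of such a rank-one component on $L$ is guaranteed by the Fuchs residue formula $\deg(L)+\mathrm{res}=0$.

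The main delicate step will be the careful handling of the apparent-pole case, where the monodromy $\rho$ alone no longer characterizes $(E,\nabla)$ and one must build the parabolic line $L_p$ into the reducibility criterion; the verification is purely local at $\omega_\infty$ but has to be carried out using the normal form \eqref{PoincareDulac} to ensure that a $\rho$-invariant line lifts to a $\nabla$-invariant sub-line-bundle matching $L_p$.
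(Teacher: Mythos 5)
Your overall route --- Riemann--Hilbert to pass from $(E,\nabla)$ to its monodromy $(A,B)$, then the Fricke identity $d:=\tr[A,B]=a^2+b^2+c^2-abc-2$ together with ``common eigenvector $\Leftrightarrow d=2$'', then $d=\tr\rho(\delta)=2\cos(\pi\vartheta)$ --- is exactly the (implicit) proof in the paper, and the first two steps are sound. The gap is in the last line: $2\cos(\pi\vartheta)=2$ is equivalent to $\vartheta\in2\Z$, \emph{not} to $\vartheta\in\Z$. For $\vartheta$ an odd integer one gets $d=-2$, the representation lies on the Markov surface $S_{-2}$ and is irreducible --- this is precisely the content of Proposition \ref{Prop:LameZlog} later in the same section. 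So the ``if'' direction of the equivalence you claim cannot be established for odd $\vartheta$ (it is false), and your chain of equivalences, carried out correctly, proves ``reducible $\Leftrightarrow\vartheta\in2\Z$''; with $\Z$ in place of $2\Z$ only the ``only if'' direction survives. You should flag this parity issue explicitly rather than absorb it into the incorrect identity $\cos(\pi\vartheta)=1\Leftrightarrow\vartheta\in\Z$.

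Two further remarks on your converse. First, exhibiting one reducible diagonal connection with a given integer exponent does not prove the ``if'' direction, which asserts that \emph{every} Lam\'e connection with that exponent is reducible; the correct argument is already in your first paragraph (for $\vartheta\in2\Z$ one has $d=2$, hence by the Fricke computation any pair $(A,B)$ with $\tr[A,B]=2$ has a common eigenvector, hence the monodromy, and therefore the connection, is reducible). Second, your example itself exposes the parity problem: $\deg(L)=-\vartheta/2$ is an integer only when $\vartheta\in2\Z$, and indeed the Fuchs relation applied to a $\nabla$-invariant sub-line-bundle gives an independent, purely residue-theoretic proof that reducibility forces $\vartheta\in2\Z$. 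Finally, the care you propose to take with the parabolic datum $L_p$ in the apparent case is harmless but not needed for this statement: a $\rho$-invariant line always saturates to a $\nabla$-invariant sub-line-bundle, so reducibility of the connection is equivalent to reducibility of $\rho$ in all cases.
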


The elliptic involution $\sigma:X\to X;(x,y)\mapsto(x,-y)$ acts on our moduli space 
sending the $\mathrm{SL}(2,\C)$-class defined by $(A,B)$ 
onto that one defined by $(A^{-1},B^{-1})$; this may be seen by choosing a fixed point of $\sigma$ as the base point for the fundamental group. One of the key point
of our construction is 

\begin{lem}\label{Lem:SL2Involution}
An $\mathrm{SL}(2,\C)$-class is stabilized
by the elliptic involution $\sigma$ if, and only if, it consists
in either irreducible, or abelian representations. In other words,
for any pair $(A,B)$ generating an irreducible or abelian subgroup
of $\mathrm{SL}(2,\C)$, there exists $M\in\mathrm{SL}(2,\C)$ such that 
$$M^{-1}AM=A^{-1}\ \ \ \text{and}\ \ \ M^{-1}BM=B^{-1}.$$
In the irreducible case, $M$ is unique up to a sign
and $\tr(M)=0$.
\end{lem}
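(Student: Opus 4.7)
The plan is to split the biconditional into three cases---irreducible, abelian, and reducible-non-abelian---handling the first two for the implication ``consists of irreducible or abelian representations $\Rightarrow$ the class is $\sigma$-stable'' (by exhibiting $M$) and the last for the converse (by showing no such $M$ can exist).

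\emph{Irreducible case, uniqueness, and trace zero.} I would start from the Fricke coordinates $(a,b,c)=(\tr A,\tr B,\tr AB)$ recalled just before the lemma. Since $\tr(A^{-1})=\tr A$, $\tr(B^{-1})=\tr B$ and $\tr(A^{-1}B^{-1})=\tr((BA)^{-1})=\tr(BA)=\tr(AB)$, the pairs $(A,B)$ and $(A^{-1},B^{-1})$ share the same Fricke invariants; in the irreducible locus $a^2+b^2+c^2-abc-2\neq 2$ this already forces them to be $\mathrm{SL}(2,\C)$-conjugate, producing the desired $M$. If $M_1,M_2$ both work, then $M_1M_2^{-1}$ centralizes both $A$ and $B$, so by Schur's lemma (irreducibility) $M_1=\pm M_2$, giving uniqueness up to sign. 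Squaring the intertwining relation yields $M^{-2}AM^2=M^{-1}A^{-1}M=(M^{-1}AM)^{-1}=A$, and similarly for $B$, so $M^2$ centralizes the irreducible pair and hence $M^2\in\{\pm I\}$. The case $M^2=+I$ would force $M=\pm I$ (no non-central involution lives in $\mathrm{SL}(2,\C)$), whence $A=A^{-1}$ and $B=B^{-1}$, contradicting irreducibility. Hence $M^2=-I$ and $\tr M=0$.

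\emph{Abelian case.} Simultaneous triangularization of a commuting pair in $\mathrm{SL}(2,\C)$ puts it in one of two normal forms: in a common eigenbasis both matrices are diagonal, and then $M=\bigl(\begin{smallmatrix}0 & 1\\-1 & 0\end{smallmatrix}\bigr)$ swaps the diagonal entries and conjugates each element to its inverse; or else $A$ and $B$ share a unique eigenline and can be written as $\varepsilon_A(I+\alpha N)$ and $\varepsilon_B(I+\beta N)$ with $\varepsilon_{A,B}=\pm 1$ and $N=\bigl(\begin{smallmatrix}0 & 1\\0 & 0\end{smallmatrix}\bigr)$, in which case $M=\bigl(\begin{smallmatrix}i & 0\\0 & -i\end{smallmatrix}\bigr)$ conjugates $N$ to $-N$ and works.

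\emph{Reducible non-abelian case---the main obstacle.} This converse direction is the subtlest step. Work in a basis where the common eigenline is $\langle e_1\rangle$, so $A$ and $B$ are upper triangular but do not commute. If $\tr A\ne\pm 2$, then $A$ has a second eigenline $\langle v_A\rangle\neq\langle e_1\rangle$, and $M^{-1}AM=A^{-1}$ forces $M$ to permute the two eigenlines of $A$; preserving each of them would make $M$ diagonal in the eigenbasis of $A$, hence commute with $A$, forcing $A=A^{-1}$ which is excluded, so $M$ must swap them, and in particular $M(e_1)\in\langle v_A\rangle$. Symmetrically for $B$ when $\tr B\ne\pm 2$, one gets $M(e_1)\in\langle v_B\rangle$, hence $\langle v_A\rangle=\langle v_B\rangle$, so $A$ and $B$ share both eigenlines and commute---contradiction. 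In the remaining degenerate case $\tr A=\pm 2$ (or symmetrically $\tr B=\pm 2$), the corresponding matrix is $\pm(I+\alpha N)$ with only $\langle e_1\rangle$ as eigenline; if both $A$ and $B$ are of this form they automatically commute, while if only $A$ is, then $M$ must preserve $\langle e_1\rangle$ (the sole eigenline of both $A$ and $A^{-1}$) and simultaneously swap $\langle e_1\rangle$ with $\langle v_B\rangle$, again impossible. The only delicate point is organizing this multiplicity-by-multiplicity case analysis cleanly so that no subcase is missed.
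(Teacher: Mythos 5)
Your proof is correct and follows essentially the same route as the paper: the Fricke trace coordinates $(a,b,c)$ settle the irreducible case, and the forced permutation of eigenlines rules out reducible non-abelian pairs. If anything you are more complete than the paper, which obtains $\tr(M)=0$ by observing that $\overline{M}$ is an involution of $\P^1$ (rather than your $M^2=-I$ argument) and explicitly omits the degenerate cases $a_1=a_2$ or $b_1=b_2$ that you treat in detail.
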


\begin{proof}In $\mathrm{SL}(2,\C)$, we have 
$\tr(A^{-1})=\tr(A)$ and 
$$\tr(A^{-1}B^{-1})=\tr(B^{-1}A^{-1})=\tr((AB)^{-1})=\tr(AB)$$ 
so that the involution acts trivially on the quotient,
i.e. on triples $(a,b,c)$.
Since irreducible $\mathrm{SL}(2,\C)$-classes are 
characterized by their corresponding triple $(a,b,c)$, 
they are $\sigma$-invariant. Another way to show this
is to note that the matrix $M$ as in the statement has
to permute the two eigenvectors of each matrix $A$ and $B$; 
in $\mathrm{PSL}(2,\C)$, we are looking for an element
$\overline{M}$ permuting the corresponding points in $\P^1$, 
sending a quadruple $(a_1,a_2,b_1,b_2)$ to the quadruple
$(a_2,a_1,b_2,b_1)$. But the cross-ratios are the same, 
thus showing the existence of $\overline{M}$; moreover,
$\overline{M}$ is an involution since its square fixes the $4$
points, and therefore $\tr(M)=0$. Degenerate cases where 
$a_1=a_2$ or/and $b_1=b_2$ have to be treated apart;
we omit this discussion.
In the reducible case, we have say $a_1=b_1$ and 
$\overline{M}$ exists if, and only if, $a_2=b_2$, thus implying
abelianity. Finally, $\sigma$ exchanges upper
and lower triangular $\mathrm{SL}(2,\C)$-representations
but stabilizes diagonal ones.
\end{proof}

When the matrices $A$ and $B$ are in the normal form above,
the matrix $M$ of the previous Lemma is given, up to a sign, by 
\begin{equation}\label{InvolMatrix}
M=\begin{pmatrix}
\frac{\gamma^2-1}{2\gamma}&\frac{a-b\gamma}{2\gamma}\\ 
\frac{a\gamma-b}{2}&-\frac{\gamma^2-1}{2\gamma}\end{pmatrix}
\end{equation}  
We resume our discussion in the non resonant case:

\begin{cor}\label{Cor:LameNotZ}Given an elliptic curve $X$ and $\vartheta\not\in\Z$, 
Lam\'e connections on $X$ with exponent $\vartheta$ are in one-to-one correspondence with the points
of the smooth affine hypersurface
$$S_d:=\{(a,b,c)\in\C^3\ ;\ a^2+b^2+c^2-abc-2=d\},\ \ \ d=2\cos(\pi\vartheta);$$
they are irreducible and $\sigma$-invariant.
\end{cor}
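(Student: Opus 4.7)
The plan is to assemble the statement from the Riemann--Hilbert correspondence, the Fricke description of the character variety, and Lemma \ref{Lem:SL2Involution}.

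First I would invoke the Riemann--Hilbert correspondence in its non-resonant form. Since $\vartheta \notin \Z$, the point $\omega_\infty$ is non-resonant (no apparent singularity possible), so the map $RH$ described just above the statement is a bijection between isomorphism classes of Lam\'e connections on $X$ with exponent $\vartheta$ and $\mathrm{SL}(2,\C)$-conjugacy classes of representations $\rho:\pi_1(X^*)\to\mathrm{SL}(2,\C)$ with $\tr(\rho(\delta))=2\cos(\pi\vartheta)=d$. No parabolic structure needs to be added because $\rho(\delta)\neq\pm I$.

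Next I would use the Fricke coordinates $(a,b,c)=(\tr A,\tr B,\tr(AB))$ with $A=\rho(\alpha)$, $B=\rho(\beta)$. The identity $\tr([A,B])=a^2+b^2+c^2-abc-2$ recorded just before the statement shows that the constraint $\tr(\rho(\delta))=d$ cuts out exactly the hypersurface $S_d$ in the affine $(a,b,c)$-space. Because $\vartheta\notin\Z$ we have $d\neq 2$, and the criterion ``$\rho$ is reducible $\Leftrightarrow a^2+b^2+c^2-abc-2=2$'' forces every representation in the fibre over $S_d$ to be irreducible. On the irreducible locus the discussion following Fricke's theorem shows that $(a,b,c)$ is a complete invariant: every point of $S_d$ is hit by exactly one $\mathrm{SL}(2,\C)$-class (realised by the explicit normal form with parameter $\gamma$). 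Composing with $RH$ yields the desired bijection.

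For smoothness I would check that the gradient $\nabla f=(2a-bc,\,2b-ac,\,2c-ab)$ of $f=a^2+b^2+c^2-abc-2$ does not vanish on $S_d$. The critical set is the solution of $2a=bc,\,2b=ac,\,2c=ab$: multiplying gives $abc\in\{0,8\}$, leading to the origin (where $f=-2$) or the four points $(\pm2,\pm2,\pm2)$ with $abc=8$ (where $f=2$). Hence $S_d$ is smooth whenever $d\neq\pm 2$, which is the case for $\vartheta\notin\Z$ since $d=2\cos(\pi\vartheta)\in(-2,2)\cup(\C\setminus\R)$ never equals $\pm 2$.

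Finally, the $\sigma$-invariance is immediate from Lemma \ref{Lem:SL2Involution}: every class in $S_d$ is irreducible, hence fixed by the involution $(A,B)\mapsto(A^{-1},B^{-1})$, so the corresponding connection is invariant under the elliptic involution. The only real obstacle is bookkeeping---verifying that the non-resonant Riemann--Hilbert correspondence genuinely produces an \emph{injection} at the level of bundle isomorphism (not just gauge equivalence over $X^*$), but this is automatic when $\rho(\delta)\neq\pm I$ because there is then no auxiliary $\nabla$-invariant line bundle to match, so the gauge equivalence over $X^*$ extends holomorphically across $\omega_\infty$.
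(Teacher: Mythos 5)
Your argument is correct and follows the same route as the paper's (very terse) proof: non-resonance makes the Riemann--Hilbert map bijective, the Fricke coordinates identify the conjugacy classes with $\tr\rho(\delta)=d$ with the points of $S_d$, irreducibility comes from $d\neq 2$, and $\sigma$-invariance from Lemma~\ref{Lem:SL2Involution}; your explicit smoothness check of $S_d$ is a welcome addition the paper omits. One tiny slip: for non-real $\vartheta$ the value $d=2\cos(\pi\vartheta)$ can be real with $|d|>2$, so your parenthetical description of its range is inaccurate, but the needed conclusion $d\neq\pm 2$ still holds since $\cos(\pi\vartheta)=\pm 1$ forces $\vartheta\in\Z$.
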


\begin{proof}The connection is irreducible 
($\vartheta\not\in\Z$)
and has no apparent singular point. The Riemann-Hilbert is therefore 
injective and assertions directly follow from Lemma \ref{Lem:SL2Involution}.
\end{proof}

\subsection{Resonant cases}

We now complete the picture with those resonant parameters
$\vartheta\in\Z$.

\subsubsection{$\vartheta\in\Z\setminus 2\Z$}
The monodromy representation is irreducible, characterized 
by the corresponding triple $(a,b,c)$.
The local monodromy around $\omega_\infty$ is parabolic, with twice the eigenvalue $-1$, and is given by the commutator 
$$[A,B]=\begin{pmatrix}
a^2+b^2+\gamma^2-abc&\gamma^{-2}(a-b\gamma)\\
a-b\gamma^{-1}&\gamma^{-2}\end{pmatrix},\ \ \ \gamma+\gamma^{-1}=c;$$
we have $\tr([A,B])=a^2+b^2+c^2-abc-2=-2$
and $[A,B]=-I$ precisely when $(a,b,c)=(0,0,0)$,
the unique singular point of the surface. 

\begin{prop}\label{Prop:LameZlog}
Lam\'e connections with exponent 
$\vartheta\in\Z\setminus 2\Z$ 
having a logarithmic singular point
are in one-to-one correspondence 
with the smooth points of the Markov affine hypersurface
$$S_{-2}=\{(a,b,c)\in\C^3\ ;\ a^2+b^2+c^2-abc=0\};$$
they are irreducible and $\sigma$-invariant.
\end{prop}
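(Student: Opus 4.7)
The plan is to follow the strategy of Corollary~\ref{Cor:LameNotZ}, adapting it to handle the resonant odd case. Since $\vartheta\in\Z\setminus 2\Z$ is odd, the local monodromy $\rho(\delta)=[A,B]$ has trace $2\cos(\pi\vartheta)=-2$, so the triple $(a,b,c)$ automatically lies on $S_{-2}$. The ``logarithmic'' condition at $\omega_\infty$ amounts to $\rho(\delta)\ne\pm I$, and since $\tr\rho(\delta)=-2$ only $\rho(\delta)=-I$ needs to be excluded. Thus one wants to identify the moduli space of such connections with $S_{-2}$ minus the locus $\{[A,B]=-I\}$, and match this with the smooth locus of $S_{-2}$.

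First I would establish irreducibility for every $(a,b,c)\in S_{-2}$. If $A$ and $B$ had a common invariant line, they could be simultaneously triangulated and $[A,B]$ would then be unipotent with trace $2$, contradicting $\tr[A,B]=-2$. Once irreducibility is in hand, Lemma~\ref{Lem:SL2Involution} yields $\sigma$-invariance for free, and the Riemann--Hilbert correspondence is injective on the set of Lam\'e connections with non-apparent singular point (the extra parabolic data of the apparent case being irrelevant here). Consequently, the moduli space of logarithmic Lam\'e connections of exponent $\vartheta$ injects into $S_{-2}\setminus\{[A,B]=-I\}$, with surjectivity coming from the usual existence part of Riemann--Hilbert.

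The key computation is to identify $\{[A,B]=-I\}$ with the singular point $(0,0,0)$ of $S_{-2}$. Using the explicit normal form recalled in the excerpt, with $\gamma+\gamma^{-1}=c$, the commutator reads
$$
[A,B]=\begin{pmatrix}-\frac{2\gamma^2+1}{\gamma^2}&\frac{a-b\gamma}{\gamma^2}\\ \frac{a\gamma-b}{\gamma}&\frac{1}{\gamma^2}\end{pmatrix}.
$$
Imposing $[A,B]=-I$ forces $\gamma^2=-1$, hence $c=0$; the off-diagonal vanishing then gives $a=b\gamma$ and $a\gamma=b$, so $b(\gamma^2-1)=-2b=0$, yielding $a=b=0$. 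Conversely, the quaternionic pair $A=\left(\begin{smallmatrix}0&-1\\1&0\end{smallmatrix}\right)$, $B=\left(\begin{smallmatrix}0&-i\\i&0\end{smallmatrix}\right)$ realises $(a,b,c)=(0,0,0)$ and yields $[A,B]=-I$ directly.

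To finish, I would verify that $(0,0,0)$ is the unique singular point of $S_{-2}$: the critical set of $f(a,b,c)=a^2+b^2+c^2-abc$ reduces, via $\nabla f=(2a-bc,2b-ac,2c-ab)$ and the identity $abc(abc-8)=0$, to the origin together with the four points having $a,b,c\in\{\pm 2\}$ and $abc=8$; the latter satisfy $f=4\ne 0$ and so miss $S_{-2}$. The main obstacle is really the matrix computation above, which simultaneously pins down when the local monodromy becomes central and matches this with the unique singularity of the Markov cubic; the remaining steps are either a direct invocation of Riemann--Hilbert and Lemma~\ref{Lem:SL2Involution} or a one-line gradient calculation.
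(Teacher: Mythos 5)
Your proof is correct and follows essentially the same route as the paper, whose own proof simply says ``the same as for Corollary \ref{Cor:LameNotZ}'' and relies on the commutator computation in the Fricke subsection showing that $[A,B]=-I$ forces $(a,b,c)=(0,0,0)$, the unique singular point of $S_{-2}$; you merely make those steps explicit. One small slip in a non-essential verification: your ``quaternionic'' pair has $\det B=-1$ (so $B\notin\mathrm{SL}(2,\C)$) and actually gives $[A,B]=I$; the correct witness is the paper's $B=\begin{pmatrix}0&i\\ i&0\end{pmatrix}$, but nothing in your argument depends on this since the normal-form computation already identifies the apparent locus with the origin.
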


\begin{proof}The same as for Corollary \ref{Cor:LameNotZ}.\end{proof}

When $(a,b,c)=(0,0,0)$, the image of the monodromy representation 
is the order $8$ dihedral group (i.e. quaternionic)
$$A=\begin{pmatrix}0&-1\\1&0\end{pmatrix}\ \ \ \text{and}\ \ \ 
B=\begin{pmatrix}0&i\\ i&0\end{pmatrix}$$
and the singular point $\omega_\infty$ of the connection
is apparent: $[A,B]=-I$. In this case, the Lam\'e connection 
is not characterized by its monodromy representation 
and we indeed have

\begin{prop}\label{Prop:LameZapp}
Lam\'e connections over the singular point
$(a,b,c)=(0,0,0)$ are in one-to-one correspondence
with $\P^1$. They have the same monodromy 
representation into the order $8$ dihedral subgroup 
of $\mathrm{SL}(2,\C)$,
thus irreducible; all of them are $\sigma$-invariant.
\end{prop}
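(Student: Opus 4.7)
The plan is to combine the enhanced Riemann--Hilbert correspondence (parabolic structure at the apparent pole) recalled just before the statement with the explicit Fricke normalization of section 6.2. First I would plug $(a,b,c)=(0,0,0)$ into the Fricke normal form, taking the choice $\gamma=i$ of root of $\gamma+\gamma^{-1}=0$, to obtain
\[
A=\begin{pmatrix}0&-1\\1&0\end{pmatrix},\qquad B=\begin{pmatrix}0&-i\\-i&0\end{pmatrix},
\]
verify that $AB=-BA$ so that $\langle A,B\rangle$ is the order $8$ quaternion subgroup of $\mathrm{SL}(2,\C)$, and recompute $[A,B]=(AB)^2=-I$. This confirms that $\omega_\infty$ is an apparent singularity (and hence $\vartheta\in\Z\setminus 2\Z$ resonant), which places us outside the smooth locus covered by Proposition \ref{Prop:LameZlog}.

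For the $\P^1$-parametrization I would invoke the parabolic Riemann--Hilbert correspondence spelled out in the previous subsection: Lam\'e connections with apparent pole at $\omega_\infty$ correspond to equivalence classes of pairs $(\rho,L_p)\in\Hom(\pi_1(X^*,p),\mathrm{SL}(E_p))\times\P(E_p)$ under simultaneous $\mathrm{SL}(2,\C)$-conjugation. Since $\rho$ is already pinned down up to conjugacy, the remaining moduli is the quotient of $\P(E_p)$ by the centralizer of $\rho$ in $\mathrm{SL}(2,\C)$. By Schur's lemma applied to the irreducible quaternion action --- or by the elementary direct computation, an element commuting with $A$ is of the form $\alpha I+\beta A$ and further commuting with $B$ forces $\beta=0$, then $\alpha=\pm1$ --- this centralizer equals $\{\pm I\}$, which acts trivially on projective space. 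The moduli is therefore $\P(E_p)\simeq\P^1$, parametrized by the choice of line $L_p$.

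For the $\sigma$-invariance I would combine a global and a local ingredient. Globally, Lemma \ref{Lem:SL2Involution} produces $M\in\mathrm{SL}(2,\C)$ with $M^{-1}\rho M=\sigma^*\rho$; a direct check (or the formula (\ref{InvolMatrix})) yields
\[
M=\pm AB=\pm\begin{pmatrix}i&0\\0&-i\end{pmatrix},
\]
and crucially $M$ belongs to the image $\Gamma=\mathrm{Im}(\rho)$ itself. Locally at $\omega_\infty$, a uniformizer $z$ satisfies $\sigma^*z=-z$, so the apparent normal form $\Omega=\tfrac{n}{2}\,\mathrm{diag}(1,-1)\,\tfrac{dz}{z}$ from (\ref{PoincareDulac}) and its distinguished line $L$ of holomorphic horizontal sections are manifestly $\sigma$-stable. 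The condition for $\sigma$-invariance of the class $(\rho,L_p)$ reduces, by the centralizer computation above, to the single projective equality $\sigma^*L_p=M\cdot L_p$ in $\P(E_p)$; but the $\sigma$-action on the abstract fibre $E_p$ is intrinsically defined only up to automorphisms of $(E,\nabla)$, i.e.\ up to a global sign, so it can be identified with the intertwiner $M$, and then the equality holds \emph{for every} $L_p$. This is exactly the feature that fails in the non-apparent case, where a different normal form in (\ref{PoincareDulac}) would force a compatibility on $L$.

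The main obstacle I expect is the bookkeeping in the last paragraph: one must check that the $\pm1$ ambiguity on the global conjugator $M$ can be synchronised with the $\pm1$ ambiguity in the local $\sigma$-isomorphism at $\omega_\infty$ so that they genuinely splice into a bundle automorphism of $(E,\nabla)$ rather than only a twisted one. Since both ambiguities are scalar and both data live over the single $\sigma$-fixed point $\omega_\infty$ where the apparent normal form is completely explicit, this is a finite verification; but it is the only place where the apparent-pole hypothesis is really used, so it deserves to be written out carefully.
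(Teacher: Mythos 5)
Your first two steps coincide with the paper's: the explicit quaternionic pair $(A,B)$ with $[A,B]=-I$, and the identification of the fibre of the (parabolic) Riemann--Hilbert map with $\P(E_p)\simeq\P^1$ via the trivial projective centralizer of an irreducible representation. The gap is in the $\sigma$-invariance. The paper's proof hinges on an explicit computation of how $\sigma$ acts on the \emph{full} monodromy data, namely
$$\sigma:(A,B,L)\longmapsto (A^{-1},B^{-1},(AB)^{-1}L),$$
where the factor $(AB)^{-1}$ on the parabolic line is not a sign ambiguity but a genuine monodromy correction, coming from the fact that $\sigma_*$ changes the transport path $\delta$ (along which $L$ is dragged from $\omega_\infty$ to the base point $p$) by a nontrivial loop. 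The ``miracle'' is that the unique intertwiner $M=\pm AB$ of Lemma \ref{Lem:SL2Involution} cancels this factor exactly, so the line condition $M^{-1}L=(AB)^{-1}L$ holds for \emph{every} $L\in\P^1$. You never compute this factor: you assert that the $\sigma$-action on $E_p$ ``can be identified with the intertwiner $M$'', which is precisely the statement to be proved.

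Your proposed local repair cannot close this gap. Because the pole is apparent, the local monodromy at $\omega_\infty$ is $-I$, so the flat automorphisms of $(E,\nabla)$ on a punctured neighbourhood of $\omega_\infty$ form all of $\mathrm{GL}(2,\C)$, not $\{\pm I\}$; hence the manifest $\sigma$-equivariance of the local normal form (\ref{PoincareDulac}) pins down nothing about how $L_p$ transforms at the base point, and the ambiguity you propose to ``synchronise'' is not a finite sign issue (indeed $\pm I$ acts trivially on $\P(E_p)$, so signs are irrelevant here). The conclusion genuinely depends on the omitted path computation: had the transport correction been, say, $A^{-1}$ instead of $(AB)^{-1}$, the matching condition would read $ABA^{-1}L=L$, i.e.\ $L$ an eigenline of $B$, and invariance would hold for only two lines rather than all of $\P^1$. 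To complete your argument you must track $\sigma_*\delta$ against $\delta$ and verify that the resulting monodromy factor is $\pm(AB)^{-1}$.
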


\begin{proof}The Lam\'e connection is determined 
by its monodromy representation $\rho$, acting on the space 
of solutions $E_p\simeq\C^2$,  and the line 
$L_p\subset E_p$ corresponding to those solutions 
holomorphic at $\omega_\infty$ after analytic continuation
along $\delta$. In other words, the connection is determined 
by a triple 
$$(A,B,L)\in\mathrm{SL}(2,\C)\times\mathrm{SL}(2,\C)\times\P^1;$$
another triple $(\tilde A,\tilde B,\tilde L)$ will represent a connection
gauge equivalent to the initial one if, and only if
$$(\tilde A,\tilde B,\tilde L)=(M^{-1}AM,M^{-1}BM,M^{-1}L),\ \ \ M\in\mathrm{SL}(2,\C).$$
The monodromy representation, being irreducible here, has
centralizor $\pm I$ acting trivially on $\P^1$:
once the monodromy representation $(A,B)$ is fixed,
gauge equivalence classes of connections are in one-to-one
correspondence with $\P^1\supset L$.

One easily check that the action of $\sigma$ on Lam\'e connections
induces the following action on the corresponding triples:
$$\sigma:(A,B,L)\mapsto(A^{-1},B^{-1},(AB)^{-1}L).$$
It turns out that, when $(a,b,c)=(0,0,0)$ (and for instance $\gamma=i$), the matrix $M$ given by Lemma \ref{Lem:SL2Involution} (see (\ref{InvolMatrix})) satisfies
$$M=A\cdot B=\begin{pmatrix}i&0\\ 0&-i\end{pmatrix}$$
and thus conjugates $\sigma(A,B,L)$ to $(A,B,L)$, thus proving
the $\sigma$-invari\-ance of the corresponding connection,
a kind of miracle.
\end{proof}

\begin{remark}In fact, the moduli space of Lam\'e connections with fixed exponent 
$\vartheta\in\Z\setminus 2\Z$ may be viewed,
over the surface $S_{-2}:\{a^2+b^2+c^2-abc=0\}$, as the minimal resolution
obtained after blowing-up once the singular point at $(0,0,0)$:
the exceptional divisor stands for those connections with apparent
singular point considered in Proposition \ref{Prop:LameZapp}.
This will immediately follow
from the similar result obtained in \cite{IIS} for connections
over $\P^1$ after our descent construction; let us
give some direct arguments.
Along the smooth part of the affine surface $S_{-2}$,
one can consider the
one dimensional subspace $L_p\subset E_p$ of solutions holomorphic
at $\omega_\infty$ after analytic continuation along $\delta$: 
one can check from the local model (\ref{PoincareDulac})
in the logarithmic case that $L_p$ coincides with the eigenspace
of the local monodromy $[A,B]$, namely
$$L=\C\cdot\begin{pmatrix}\gamma^2+1\\ -a\gamma^2+b\gamma
\end{pmatrix}.
$$
The exceptional divisor of $S_{-2}$ is given by $a^2+b^2+c^2=0$
in homogeneous coordinates $(a:b:c)$ and can be parametrized by
$$\P^1\hookrightarrow\P^2\ ;\ 
s\mapsto (i(s^2+1):s^2-1:2s).$$
One can easily verify that the line $L_p$ tends to 
$\C\cdot\begin{pmatrix}1\\ s\end{pmatrix}$
when the representation $\rho$ tends to the point $s$
via the parametrization above.
\end{remark}

\subsubsection{$\vartheta\in2\Z$}
In this case, a combination of several non Hausdorff phenomena 
occur for both moduli spaces of representations, and connections. 
The Hausdorff quotient is given by the Cayley affine hypersurface
$$S_{2}=\{(a,b,c)\in\C^3\ ;\ a^2+b^2+c^2-abc=4\}.$$
The singular points of $S_{2}$ are 
$$(a,b,c)=(2,2,2),\ (2,-2,-2),\ (-2,2,-2)\ \ \ \text{and}\ \ \ (-2,-2,2);$$
they play the same role in the sense that they are permuted by 
changing signs of generators:
$$(A,B),\ (A,-B),\ (-A,B)\ \ \ \text{and}\ \ \ (-A,-B).$$

Over a smooth point $(a,b,c)\in S_{2}$, there are exactly 
$3$ distinct $\mathrm{SL}(2,\C)$-conjugacy
classes of representations, namely
$$(A,B)=\left( 
\begin{pmatrix}\alpha&\lambda\\ 0&\alpha^{-1}\end{pmatrix},
\begin{pmatrix}\beta&\mu\\ 0&\beta^{-1}\end{pmatrix}\right),\ \ \ 
\left( 
\begin{pmatrix}\alpha&0\\ \lambda&\alpha^{-1}\end{pmatrix},
\begin{pmatrix}\beta&0\\ \mu&\beta^{-1}\end{pmatrix}\right)$$
$$\text{(genuine upper and lower triangular)}$$
$$\ \ \ \text{and}\ \ \ 
\left( 
\begin{pmatrix}\alpha&0\\ 0&\alpha^{-1}\end{pmatrix},
\begin{pmatrix}\beta&0\\ 0&\beta^{-1}\end{pmatrix}\right)
\ \ \ \text{where}\ \ \ 
\left\{\begin{matrix}\alpha+\alpha^{-1}&=&a\\
\beta+\beta^{-1}&=&b\\
\alpha\beta+(\alpha\beta)^{-1}&=&c\end{matrix}\right.$$
In the triangular cases, $\lambda$ and $\mu$ can be choosen arbitrarily, 
provided that $[A,B]\not=I$, i.e. $b\lambda+a\mu\not=0$.

Each of the two triangular representations correspond to 
a unique Lam\'e connection (with a logarithmic singular point).
They are permuted by $\sigma$ and thus not $\sigma$-invariant.
The moduli space of those triangular connections is a $2$-fold
cover of the smooth part $S_{2}^*$ of $S_{2}$, the two sheets of which 
are permuted around each of the four singular points.

When $\vartheta=0$, the diagonal representation corresponds
to a unique (regular) Lam\'e connection which is $\sigma$-invariant.

When $\vartheta\not=0$, Lam\'e connections
over the diagonal representation
have an apparent singular point: there are exactly $3$ 
equivalence classes corresponding to the following 
choices for the line bundle $L$ (see the proof of 
Proposition \ref{Prop:LameZapp})
$$\P^1\ni L=(1:0),\ (0:1)\ \ \ \text{or}\ \ \ (1:1)$$
(any choice $L=(1:s)$, $s\in\C^*$, is equivalent to $L=(1:1)$).
The involution $\sigma$ permutes the two first connections
while it fixes the ``generic'' third one: in both situations, 
it suffices to choose the matrix
$$M=\begin{pmatrix}0&i\gamma^{-1}\\ i\gamma&0\end{pmatrix}$$ (see again proof of Proposition \ref{Prop:LameZapp}).
By the way, Lam\'e connections with diagonal monodromy split into
another two-fold cover of $S_{2}^*$, with Galois involution $\sigma$,
and a copy of $S_{2}^*$, on which $\sigma$ acts trivially.

Finally, {\it over each smooth point $(a,b,c)\in S_{2}^*$, 
there are exactly $5$ Lam\'e connections 
(resp. $3$ when $\vartheta=0$), only one of which 
is $\sigma$-invariant.}

Consider now a singular point, say $(a,b,c)=(2,2,2)$
(recall that the four singular points play the same role).
Above this point, there are infinitely many distinct 
$\mathrm{SL}(2,\C)$-conjugacy classes in the fibre
defined by parabolic pairs 
$$(A,B)=\left( 
\begin{pmatrix}1&r\\ 0&1\end{pmatrix},
\begin{pmatrix}1&s\\ 0&1\end{pmatrix}\right),
\ \ \ (r:s)\in\P^1$$
and the central one $(A,B)=(I,I)$ (when $(r,s)=(0,0)$).
When $\vartheta=0$, those representations bijectively
correspond to Lam\'e connections that are $\sigma$-invariant.
When $\vartheta\not=0$, then for each parabolic
representation, there are exactly $2$ Lam\'e connections 
given by $L=(1:0)$ 
and $L=(s:1)$ (any $s\in\C$ are equivalent) and one Lam\'e connection with
trivial monodromy;
all of them are $\sigma$-invariant. 

\begin{remark}\rm
When we have an apparent singular point and 
the direction $L$ is fixed by the monodromy,
we get a $\nabla$-invariant line bundle, say $L$ again, 
having positive degree, thus implying unstability of the bundle $E$.
Indeed, in this case we have $\vartheta=\pm 2m$, $m\in\Z_{>0}$ and $\nabla$-horizontal sections in $L$ are holomorphic,
vanishing at the order $m$ at $\omega_\infty$; by Fuchs relation,
$\deg(L)=m$. We do not want to consider this kind of deformations 
in this paper.
\end{remark}

We resume a part of our discussion

\begin{prop}When $\vartheta=2m\in\Z$, all semistable and 
$\sigma$-invariant Lam\'e connections have an apparent singular point
(resp. regular when $m=0$) at $\omega_\infty$ and their monodromy 
data belong to the following list: 
\begin{itemize}
\item $(A,B)=( 
\begin{pmatrix}\alpha&0\\ 0&\alpha^{-1}\end{pmatrix},
\begin{pmatrix}\beta&0\\ 0&\beta^{-1}\end{pmatrix})$ 
with $(\alpha,\beta)\not=(\pm1,\pm1)$\hfill\break
\hfill and $L=\begin{pmatrix}1\\ 1\end{pmatrix}$ when $m\not=0$;
\item $(A,B)=(
\pm\begin{pmatrix}1&s\\ 0&1\end{pmatrix},
\pm\begin{pmatrix}1&t\\ 0&1\end{pmatrix})$ 
with $(s,t)\in\P^1$\hfill\break
and $L=\begin{pmatrix}1\\ 1\end{pmatrix}$ when $m\not=0$.
\end{itemize}
When $m=0$, we also have to add the $4$ connections with monodromy $(A,B)=(\pm I,\pm I)$.
\end{prop}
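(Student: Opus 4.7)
The strategy is to apply two filters to the complete case-by-case enumeration of Lamé connections with exponent $\vartheta = 2m$ already carried out in the preceding subsection: $\sigma$-invariance, and semistability of the underlying bundle.

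First I would translate $\sigma$-invariance into the language of triples $(A,B,L)$ via the formula
$$\sigma : (A,B,L) \longmapsto (A^{-1}, B^{-1}, (AB)^{-1} L)$$
derived in the proof of Proposition~\ref{Prop:LameZapp}, and run it against Lemma~\ref{Lem:SL2Involution}. Over a smooth point $(a,b,c) \in S_2^*$, the conjugating matrix $M$ provided by the Lemma is antidiagonal in the eigenbasis of the diagonal representative and hence swaps the two eigenlines; consequently the two genuinely triangular classes are interchanged by $\sigma$ and must be discarded, while among the three $L$-lifts of the diagonal class only $L = (1{:}1)$ survives. Over each of the four singular points of $S_2$, an explicit $M = \mathrm{diag}(i,-i)$ shows that all parabolic representations are $\sigma$-invariant; both $L$-lifts, namely $L = (1{:}0)$ and $L = (s{:}1) \sim (1{:}1)$, give rise to $\sigma$-invariant classes in the moduli, as does the trivial $(\pm I, \pm I)$.

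Next I would impose semistability by invoking the Remark preceding the proposition: an apparent singularity at $\omega_\infty$ whose line $L$ is fixed by the monodromy yields a $\nabla$-invariant sub-line-bundle of degree $m$, destabilizing $E$ whenever $m \neq 0$. Going through the $\sigma$-invariant list above, this eliminates in the parabolic case the eigenline $L = (1{:}0)$ (leaving $L = (s{:}1) \sim (1{:}1)$), and in the trivial-monodromy case every $L$ — so the four central classes $(\pm I, \pm I)$ are unstable for $m \neq 0$ and must be removed. When $m = 0$ the pole becomes regular, the auxiliary datum $L$ disappears, and these four classes return as admissible, accounting for the addendum in the statement. The diagonal case with $(\alpha,\beta) \neq (\pm 1, \pm 1)$ and $L = (1{:}1)$ is automatically semistable because $(1{:}1)$ is not a common eigenline of the non-scalar diagonal pair.

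The main obstacle I anticipate is the bookkeeping around the singular points of $S_2$, where the Hausdorff quotient collapses several a priori distinct $\mathrm{SL}(2,\C)$-orbits: one must be careful both to avoid double-counting (diagonal classes with $(\alpha,\beta) = (\pm 1, \pm 1)$ coalesce with the central $(\pm I, \pm I)$ classes, and the parabolic classes sit over the same triples $(a,b,c)$) and to track precisely which $L$-lifts are monodromy-invariant, hence destabilizing. Once this is sorted, the surviving classes match the list in the statement.
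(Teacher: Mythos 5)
Your proposal is correct and takes essentially the same route as the paper: there the proposition is explicitly a résumé of the case-by-case discussion of the $\vartheta\in2\Z$ subsection together with the preceding Remark, and your two-filter reorganization ($\sigma$-invariance via Lemma \ref{Lem:SL2Involution} and the action $(A,B,L)\mapsto(A^{-1},B^{-1},(AB)^{-1}L)$, then semistability via the degree-$m$ invariant subbundle produced by a monodromy-fixed $L$) performs exactly the same eliminations. The only cosmetic point is that the exchange of the two genuinely triangular classes is most cleanly read off from the Lemma itself (reducible non-abelian pairs admit no conjugating $M$ at all, and the third class over the same $(a,b,c)$ is the $\sigma$-invariant diagonal one), rather than from the shape of an $M$ that does not exist for those classes.
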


\subsection{Irreducible Lam\'e connections are elliptic pull-back}\label{sect:IrreducibleLamePullBack}

We now check that $\sigma$-invariant representations are actually coming from representations
of the $4$-punctured sphere via the elliptic cover. Precisely, let us consider the elliptic pull-back
construction of section \ref{sec:MainConstruction} from the monodromy representation point of view.
For a connection $(E,\nabla)\in\mathcal M_{t}^{\boldsymbol\theta}$ with exponents 
$$\boldsymbol{\theta}=(\frac{1}{2},\frac{1}{2},\frac{1}{2},\frac{1}{2}+\frac{\vartheta}{2}),$$
consider the monodromy representation 
$$\pi_1(\P^1\setminus\{0,1,t,\infty\})\to \mathrm{SL}(2,\C).$$
It is defined by matrices $(M_0,M_1,M_t,M_\infty)$ satisfying 
\begin{equation}\label{Eq:IdentMi}
(M_0)^2=(M_1)^2=(M_t)^2=-I\ \ \ \text{and}\ \ \ \tr(M_\infty)=-2\sin(\frac{\pi\vartheta}{2})
\end{equation}
The monodromy of its elliptic pull-back is therefore given by 
$$A=M_0M_1\ \ \ \text{and}\ \ \ B=M_1M_t$$
(see \cite[section 2]{DynChar} for details), the commutator by
$$[A,B]=-M_0(M_\infty)^2M_0^{-1}$$
and we can check that its trace is given by 
$$\tr([A,B])=-\tr((M_\infty)^2)=2-(\tr(M_\infty))^2=2\cos(\pi\vartheta).$$
Clearly, this representation is $\sigma$-invariant since for $M:=\pm M_1$ we get from (\ref{Eq:IdentMi}) that
$$M^{-1}AM=A^{-1}\ \ \ \text{and}\ \ \ M^{-1}BM=B^{-1}.$$
Conversely, let $(A,B)$ defines the monodromy of a $\sigma$-invariant Lam\'e connection:
there is a matrix $M$ conjugating $(A,B)$ to $(A^{-1},B^{-1})$. From the previous sections, 
it is clear that we can assume that $M$ has null trace:
$M^2=-I$. Then it is straightforward to check that $(A,B)$ is the elliptic pull-back of the following represention 
$$M_0=-AM,\ \ \ M_1=M,\ \ \ M_t=-MB\ \ \ \text{and}\ \ \ M_\infty=B^{-1}MA^{-1}.$$
When the monodromy $(A,B)$ is irreducible, 
then $(M_0,M_1,M_t,M_\infty)$ is the unique quadruple up to a sign,
whose elliptic pull-back gives the representation $(A,B)$. 

\begin{cor}\label{Cor:IrredLameArePullBack}
Let $(E,\nabla)$ be an irreducible Lam\'e connection with exponent $\vartheta\not\in2\Z$.
Then there is a unique (up to isomorphism) connection $(\underline E,\underline\nabla)\in \mathcal M_{t}^{\boldsymbol\theta}$
with exponents 
$$\boldsymbol{\theta}=(\frac{1}{2},\frac{1}{2},\frac{1}{2},\frac{1}{2}+\frac{\vartheta}{2})$$
such that $(E,\nabla)$ is the elliptic pull-back of $(\underline E,\underline\nabla)$.
\end{cor}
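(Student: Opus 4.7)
The heart of the matter has just been carried out at the level of representations: the monodromy $(A,B)$ of $(E,\nabla)$ is $\sigma$-invariant by Lemma~\ref{Lem:SL2Involution}, and the preceding subsection shows it lifts, essentially uniquely (up to a global sign of all four matrices), to a quadruple $(M_0, M_1, M_t, M_\infty)$ on the $4$-punctured sphere satisfying $M_i^2 = -I$ for $i = 0,1,t$ and $\tr(M_\infty) = -2\sin(\pi\vartheta/2)$. What remains is to promote this representation-level correspondence to a correspondence of connections.

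First I would invoke the Riemann--Hilbert correspondence on $\pi_1(\P^1 \setminus \{0,1,t,\infty\})$ to realize $(M_0, M_1, M_t, M_\infty)$ as the monodromy of a logarithmic $\mathrm{sl}(2,\C)$-connection $(\underline E, \underline\nabla)$ on $\P^1$. The eigenvalues $\pm i$ of the $M_i$ force $\theta_i = \tfrac{1}{2}$ for $i = 0,1,t$, while the trace relation $-2\sin(\pi\vartheta/2) = 2\cos(\pi(\tfrac{1}{2} + \tfrac{\vartheta}{2}))$ identifies $\theta_\infty = \tfrac{1}{2} + \tfrac{\vartheta}{2}$, so $(\underline E, \underline\nabla) \in \mathcal M_t^{\boldsymbol\theta}$. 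When $\vartheta$ is an odd integer, so that $\theta_\infty$ is resonant, Riemann--Hilbert must be supplemented with the parabolic line distinguishing the logarithmic from the apparent behavior; we prescribe this line to be the image, under the formulas of the preceding subsection, of the holomorphic direction $L_p$ attached to $(E, \nabla)$.

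Next I would verify that the elliptic pull-back $(\tilde E'', \tilde\nabla'')$ of $(\underline E, \underline\nabla)$ from Section~\ref{sec:MainConstruction} is isomorphic to $(E, \nabla)$. By construction, Step~1 doubles the exponents at $\omega_0, \omega_1, \omega_t$ to $1$, turning the local monodromies there into $-I$; Step~2 clears these apparent poles via elementary transformations, and Step~3 restores the trace-free condition by a twist, leaving a single logarithmic pole at $\omega_\infty$ with exponent $\vartheta$. By the computation of the preceding subsection, the resulting monodromy on $\pi_1(X^*)$ is exactly $(M_0 M_1, M_1 M_t) = (A, B)$. The injectivity of Riemann--Hilbert for Lam\'e connections, together with the matched parabolic line when $\vartheta \in \Z \setminus 2\Z$, then yields the required isomorphism $(\tilde E'', \tilde\nabla'') \cong (E, \nabla)$.

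For uniqueness, suppose $(\underline E', \underline\nabla') \in \mathcal M_t^{\boldsymbol\theta}$ also pulls back to $(E, \nabla)$. Its monodromy quadruple $(M_i')$ lifts $(A, B)$ on the elliptic side, so by the previous subsection it equals $\pm(M_i)$. The negative sign would reverse $\tr(M_\infty)$ and hence force the exponent at infinity to become $\tfrac{1}{2} - \tfrac{\vartheta}{2}$ rather than $\tfrac{1}{2} + \tfrac{\vartheta}{2}$; since $\vartheta \notin 2\Z$, these two exponent classes are distinct, contradicting membership in $\mathcal M_t^{\boldsymbol\theta}$. Hence $(M_i') = (M_i)$ and Riemann--Hilbert yields $(\underline E', \underline\nabla') \cong (\underline E, \underline\nabla)$. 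The main obstacle is tracking the parabolic datum through the elementary transformations of Steps~2--3 in the resonant range $\vartheta \in \Z \setminus 2\Z$; I would reduce this to a local normal-form computation at $\omega_\infty$ against the model $(\ref{PoincareDulac})$.
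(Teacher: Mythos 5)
Your proposal is correct and follows essentially the same route as the paper's proof: lift the $\sigma$-invariant monodromy $(A,B)$ to the quadruple $(M_0,M_1,M_t,M_\infty)$ whose sign is pinned down by $\tr(M_\infty)=-2\sin(\pi\vartheta/2)$, apply the Riemann--Hilbert correspondence on the $4$-punctured sphere to produce $(\underline E,\underline\nabla)$, and use the injectivity of Riemann--Hilbert (supplemented by the parabolic datum in the resonant case $\vartheta\in\Z\setminus 2\Z$) to identify the elliptic pull-back with $(E,\nabla)$ and to get uniqueness. The only cosmetic difference is that the paper points to Proposition~\ref{Prop:LameZapp} as the key input for the apparent-singular-point subcase, where you instead propose a local normal-form computation; both leave the details at the same level.
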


\begin{proof}Consider $(A,B)$ the monodromy representation of $(E,\nabla)$. 
There is a unique quadruple $(M_0,M_1,M_t,M_\infty)$ lifting to the representation $(A,B)$
such that $\tr(M_\infty)=-2\sin(\frac{\pi\vartheta}{2})$ (one has to conveniently choose the sign of $M$, and therefore of the quadruple). 
Assume non resonance condition $\vartheta\not\in\Z$.
By the Riemann-Hilbert correspondance, there is a unique connection $(\underline E,\underline\nabla)\in \mathcal M_{t}^{\boldsymbol\theta}$
with prescribed monodromy and exponents up to isomorphism. By construction, the elliptic pull-back of $(\underline E,\underline\nabla)$ 
must have exponent $\vartheta$ and holonomy representation $(A,B)$, the same as $(E,\nabla)$. Again by (unicity part of) Riemann-Hilbert correspondance, 
the elliptic pull-back of $(\underline E,\underline\nabla)$ must be isomorphic to $(E,\nabla)$, proving the Corollary in the non resonant case. 
When $\vartheta\in\Z\setminus 2\Z$, the proof is the same if 
the singular point is logarithmic (i.e. with infinite monodromy). However when the pole of $(E,\nabla)$ becomes apparent,
then we have to deal with a parabolic structure to restore injectivity of the Riemann-Hilbert correspondance; we do not detail, but the key step
of the proof is precisely given by Proposition \ref{Prop:LameZapp}.
\end{proof}

\section{Proof of Theorem \ref{T:Main}}\label{sec:proof}

We now detail the proof of Theorem \ref{T:Main}. Let $t\mapsto(E_t,\nabla_t)$ be 
an isomonodromic deformation of an irreducible Lam\'e connection with exponent $\vartheta$.
From Corollary \ref{Cor:IrredLameArePullBack}, it is the elliptic pull-back of an isomonodromic 
deformation $t\mapsto(\underline E_t,\underline\nabla_t)\in\mathcal M^{\boldsymbol\theta}_t$
with $\boldsymbol{\theta}=(\frac{1}{2},\frac{1}{2},\frac{1}{2},\frac{1}{2}+\frac{\vartheta}{2})$.
From Bolibrukh transversality (see section  \ref{sec:Viktoria}), there is an open set of the parameter 
for which the bundle $E_t$ is trivial and the parabolic directions $(l_0,l_1,l_t,l_\infty)$ are pairwise distinct;
moreover, they do not lie on a degree $(1,1)$ curve. Therefore, the cross-ratio
$$c=\frac{l_t-l_0}{l_1-l_0}\frac{l_1-l_\infty}{l_t-l_\infty}\in\P^1\setminus\{0,1,t,\infty\}$$
is not special and we can apply Proposition \ref{P:ElmTransfRuled} and Corollary \ref{cor:TuInvariantGeneric}
and get the explicit expression for Tu invariant
$$\lambda(E_t)=q+\frac{\rho+\kappa_\infty}{p}$$
where $t\mapsto (p(t),q(t))$ are the invariants of $(\underline E_t,\underline\nabla_t)$.
In particular, $\lambda(E_t)$ coincide with the Okamoto symmetric $s_2s_1s_2$ of $q(t)$
(see section \ref{sec:OkaSym}) which is therefore a Painlev\'e VI solution itself, for parameters 
$$\tilde{\boldsymbol\kappa}=\left(\frac{\vartheta}{4},\frac{\vartheta}{4},\frac{\vartheta}{4},\frac{\vartheta}{4}\right).$$

\section{Flat logarithmic $\mathrm{sl}(2,\C)$-connections}
Here, we recall basic facts about flat logarithmic connections
that can be found in greater details in 
\cite{Deligne,NovikovYakovenko,LorayPereira,Heu}.

A {\bf meromorphic connection} of rank $r$ over a smooth complex 
manifold $X$ is a pair $(E,\nabla)$ where $E$ is a locally trivial 
rank $r$ holomorphic vector bundle over $X$ and $\nabla$
a $\C$-linear morphism of sheaves
$$\nabla\ :\ \mathcal E\to \mathcal M(K)\otimes_\mathcal O \mathcal E$$
(where $\mathcal E$ is the sheaf of holomorphic sections of $E$,
and $\mathcal M(K)$, the sheaf of meromorphic sections of the canonical
bundle $K$) satisfying moreover the Leibniz rule
$$\nabla(fv)=df\otimes v+f\nabla(v)$$
for all sections $f$ and $v$ of the structural sheaf $\mathcal O$
and the vector bundle $E$ respectively. 
From the analytic point of view, $E$ is defined
by charts 
$$U_i\times\C^r\ni(x,Y_i),\ \ \ X=\cup_i U_i,$$ 
glued by transition maps 
$$Y_i=M_{i,j}Y_j,\ \ \ M_{i,j}\in\mathrm{GL}(r,\mathcal O(U_i\cup U_j));$$
then $\nabla$ is a differential operator of the form 
$$Y_i\mapsto dY_i-\Omega_i Y_i,\ \ \ \Omega_i\in\mathrm{gl}(r,\mathcal M(K)(U_i)),$$ 
in trivializing charts, satisfying compatibility conditions
$$\Omega_j=M_{i,j}^{-1} \Omega_i M_{i,j} +M_{i,j}^{-1}dM_{i,j}.$$
Throughout this work, we will adopt the later analytic point of view.
Meromorphic connections $(E,\nabla)$ will be considered up to 
holomorphic isomorphisms of vector bundles.

\subsection{Polar divisor}
We say that $\nabla$ has a {\bf pole} at some point 
$x\in U_i\subset X$ 
if at least one of the coefficients 
of the corresponding matrix $\Omega_i$ has a pole at $x$; 
the {\bf order} of the pole is therefore given by the
maximal order for all coefficients. One easily check
that is does not depend neither on the choice of the chart $U_i$,
nor of the local trivialization $Y_i$. The {\bf polar divisor} $D=(\nabla)_\infty$ 
of the connection is a well defined positive divisor on $X$.

\subsection{Flatness and monodromy representation}
A {\bf horizontal section} (or solution) of $(E,\nabla)$
is any section $v$ of $E$ satisfying $\nabla(v)=0$;
in a chart, horizontal sections $Y_i(x)$ are the solutions 
of the Pfaffian system $dY_i=\Omega_iY_i$.
The connection $(E,\nabla)$ is {\bf flat} (or integrable) 
when it satisfies
$$d\Omega_i+\Omega_i\wedge\Omega_i=0$$
in any chart (in once chart is actually enough).
This is equivalent to the existence of a basis
$\mathcal B=(v_1,\ldots,v_r)$ of horizontal holomorphic 
sections at any regular point for $\nabla$. In other
words, the connection is flat if it is locally trivial
at any regular point, i.e. given by $Y_i\mapsto dY_i$ 
($\Omega_i\equiv0$)
in convenient local trivialisation of $E$. This basis $\mathcal B$ admits 
analytic continuation along any paths in $X\setminus D$, just by gluing local
trivializations of $\nabla$ with help of transition maps of the bundle.
Therefore, fixing $x_0\in X\setminus D$ and a basis $\mathcal B$
like above at the neighborhood of $x_0$, we get the 
{\bf monodromy representation} of $(E,\nabla)$ with
respect to $\mathcal B$ a homomorphism
$$\rho_{\nabla,\mathcal B}\ :\ \pi_1(X\setminus D,x_0)\to\mathrm{GL}(r,\C)\ ;\ \gamma\mapsto M_\gamma$$
defined as follows: if $\mathcal B_\gamma$ is the new basis
of horizontal sections around $x_0$ obtained after analytic continuation along $\gamma$, then $M_\gamma$ is given by 
$$\mathcal B_\gamma=\mathcal BM_\gamma.$$
If we change the basis of horizontal sections $\mathcal B$ 
by another one $\mathcal B'=M\mathcal B$, 
$M\in\mathrm{GL}(r,\mathbb  C)$, then the new monodromy
representation is given by
$$\rho_{\nabla,\mathcal B'}(\gamma)=M\cdot\rho_{\nabla,\mathcal B'}(\gamma)\cdot M^{-1},\ \ \ \forall\gamma\in\pi_1(X\setminus D,x_0).$$
Therefore, the monodromy representation is well-defined by $\nabla$
up to $\mathrm{GL}(r,\mathbb  C)$-conjugacy and we will simply
denote by $\rho_\nabla$ any representative of the conjugacy class.

\subsection{Flat logarithmic connections}
A flat connection is said {\bf logarithmic} when it has only simple poles
(i.e. $D$ is reduced) and moreover,  in charts, the 
matrix connection $\Omega$ is such that its differential $d\Omega$
has simple poles as well. This later condition is equivalent 
to the fact that, at the neighborhood any smooth point 
of the polar divisor $D$, the connection has a product structure:
there exists local coordinates $(x_1,\ldots,x_n)$ on $X$ 
and a local trivialization $Y_i$ such that
$D$ is defined by $x_2=\cdots=x_n=0$ and
the matrix connection only depend on the single variable $x:=x_1$.
Therefore, along each irreducible component of $D$,
the $\mathrm{GL}(r,\mathbb  C)$-conjugacy class 
of the residual matrix is constant: one can talk about
the {\bf eigenvalues} $\{\theta_1,\ldots,\theta_r\}$ of the connection 
at each pole, i.e. each component of $D$. 
A pole is said {\bf resonant}
when at least two eigenvalues differ by an integer: 
$\theta_i-\theta_j\in\Z$, $i\not=j$.
At any smooth point of a non resonant pole, the matrix connection 
can be further reduced to its principal part
$$\Omega=\begin{pmatrix}\theta_1&&0\\&\ddots&\\0&&\theta_r\end{pmatrix}\frac{dx}{x}.$$
In the resonant case, for each $\theta_i-\theta_j\in\Z_{\ge 0}$,
the $(i,j)$-coefficient of $\Omega$ can be reduced to a resonant
monomial $c\cdot x^{\theta_i-\theta_j}\cdot\frac{dx}{x}$.
For each irreducible component $D_j$ of the divisor $D$,
fix a path $\delta_j$ in $X\setminus D$ joining the base point $x_0$ 
to a smooth point of $D_j$. Now, consider a loop $\gamma_j$
in $X\setminus D$ based at $x_0$ going first along $\delta_j$
very close to $D_j$, turning once around $D_j$, and going back
to $x_0$ by $\delta_j^{-1}$. The conjugacy class of $\rho(\gamma_j)$
does not depend on the choices and is called the 
{\bf local monodromy} of $(E,\nabla)$ around $D_j$;
eigenvalues are given by $\{e^{2i\pi\theta_1},\ldots,e^{2i\pi\theta_r}\}$.
If the local monodromy is diagonalisable, so is the residual matrix;
the converse is not true. 

\subsection{Trace and twist}
The {\bf trace} of a connection $(E,\nabla)$ is the rank $1$
meromorphic connection $(\det(E),\tr(\nabla))$ where
$\det(E)$ is the determinant of $E$ 
defined (with notations above) by transition charts $\det(M_{i,j})$, 
and $\tr(\nabla)$, defined by $y_i\mapsto dy_i-\tr(\Omega_i) y_i$.
We say that the connection $(E,\nabla)$ is {\bf trace free} 
when its trace is the trivial connection (on the trivial bundle):
$y\mapsto dy$. The polar divisor of the trace is bounded by 
that one of the initial connection. The {\bf twist} of $(E,\nabla)$ 
by a rank $1$ connection $(L,\zeta)$ is the rank $r$ connection
given by their tensor product $(L\otimes E,\zeta\otimes \nabla)$:
if $(L,\zeta)$ is defined in the same
open covering $U_i$ by $y_i\mapsto dy_i-\omega_iy_i$
with transition charts $y_i=m_{i,j}y_j$, then the twist
has the matrix form $\Omega_i+\omega_i\cdot I$
with transition charts $Y_i=m_i\cdot M_i Y_j$.
We have
$$\det(L\otimes E)=\det(L)^{\otimes r}\otimes\det(E)\ \ \ \text{and}\ \ \ 
\tr(\zeta\otimes\nabla)=\tr(\zeta)^{\otimes r}\otimes\tr(\nabla).$$
The trace of a flat (resp. logarithmic) connection is flat 
(resp. logarithmic).

\subsection{$\mathrm{sl}(2,\C)$-connections}
For the sake of notations, we now restrict ourselves to flat logarithmic
$\mathrm{sl}(2,\C)${\bf-connections} (i.e. rank $2$ 
and trace free); the monodromy representation takes values
into $\mathrm{SL}(2,\C)$. For each irreducible component $D_j$
of the polar divisor $D$, the {\bf exponent} $\theta_j\in\C$, defined
up to a sign, is the difference between the two eigenvalues $\pm\frac{\theta_j}{2}$ of the residual matrix: the corresponding local monodromy has trace $2\cos(\pi\theta)$. 
The component $D_j$ is resonant
if, and only if, $\theta_j\in\Z$. In this case, say $\theta_j=n\in\Z_{\le 0}$, the connection matrix
can be reduced to 
$$\text{either}\ \ \ \Omega=\begin{pmatrix}\frac{n}{2}&0\\0&-\frac{n}{2}\end{pmatrix}\frac{dx}{x},\ \ \ \text{or}\ \ \ \Omega=\begin{pmatrix}\frac{n}{2}&x^n\\0&-\frac{n}{2}\end{pmatrix}\frac{dx}{x}$$
at the neighborhood of any smooth point of $D_j$.
The corresponding local monodromy is respectively
$$\pm\begin{pmatrix}1&0\\0&1\end{pmatrix},\ \ \ \text{or}\ \ \ \pm\begin{pmatrix}1&1\\0&1\end{pmatrix}$$
where $\pm:=(-1)^n$.
The pole is called {\bf apparent} in the former case 
(there is no pole when $n=0$) and {\bf logarithmic}
in the later case. In any case, we note that bounded solutions
$$v=\begin{pmatrix}cx^{n/2}\\ 0\end{pmatrix},\ \ \ c\in\C,$$
form a one-dimensional subspace of the space of solutions.

\subsection{Riemann-Hilbert correspondence}
One defines the {\bf Riemann-Hilbert correspondence} as the map
$$\begin{matrix}&RH&\\
\left\{\begin{matrix}\text{Flat logarithmic}\\ \text{$\mathrm{sl}(2,\C)$-connections over $X$}\\ 
\text{with polar divisor $D$}\\
\text{and exponent $\theta_j$ over $D_j$}
\end{matrix}\right\}/\sim     &\to&
\left\{\begin{matrix}\text{Representations}\\ \text{$\pi_1(X\setminus D,x_0)\to\mathrm{SL}(2,\C)$}\\ 
\text{having trace $2\cos(\pi\theta_j)$}\\
\text{around $D_j$}
\end{matrix}\right\}/\sim\\
(E,\nabla)&\mapsto&\rho\end{matrix}$$
which assigns to a connection, up to holomorphic bundle isomorphism,
the corresponding monodromy representation, up to conjugacy.
This map is surjective provided that $D$ has normal crossings
\cite{Deligne} or $X$ has dimension $\le 2$ \cite{LorayPereira}.
It is moreover injective provided that none
of the exponent is a non zero integer. 
In fact, the lack of injectivity comes
from apparent singular points. One can restore the injectivity
in the resonant case by enriching the monodromy data as follows.
For each $\theta_j\in\Z\setminus\{0\}$,
consider, in the basis of solutions $\mathcal B$ near $x_0$,
the one-dimensional subspace $L_j\subset\C^2$ of those solutions 
that are bounded around $D_j$ after analytic continuation 
along the path $\delta_j$. The full monodromy data, characterizing
the connection up to isomorphism, is now given by 
the monodromy representation $\rho$ and the collection
$L_j\in\P^1$($=\P(\C^2)$) 
where $j$ spans over the set $J^{res}$ of all indices such that
$\theta_j\in\Z\setminus\{0\}$. Any base change 
$\mathcal B'=M\mathcal B$, $M\in\mathrm{SL}(2,\C)$, yields new monodromy data
\begin{equation}\label{Eq:BaseChange}
\rho'=M\cdot\rho\cdot M^{-1}\ \ \ \text{and}\ \ \ L_j'=M\cdot L_j,\ \forall j\in J^{res}.
\end{equation}
(for the standart action of $\mathrm{SL}(2,\C)$
on $\C^2$).

\begin{prop}\label{prop:RHparabolic}
Assume $D$ is normal crossing reduced divisor,
and $D_j$, $\gamma_j$, $\delta_j$ be as above.
The set of flat logarithmic $\mathrm{sl}(2,\C)$-connections 
$(E,\nabla)$ with polar divisor $D$ and exponents $\theta_j$ along $D_j$
modulo isomorphism is in one-to-one correspondence 
with the set of pairs
$\left( \rho, (L_j)_{j\in J^{res}} \right)$
where
\begin{itemize}
\item $\rho\in\mathrm{Hom}(\pi_1(X\setminus D,x_0),\mathrm{SL}(2,\C))$ such that $\tr(\rho(\gamma_j))=2\cos(\pi\theta_j)$ 
for all $j\in J$,
\item $L_j\in\P^1$ is $\rho(\gamma_j)$-invariant 
for all $j\in J^{res}$
\end{itemize}
modulo the $\mathrm{SL}(2,\C)$-action given by
(\ref{Eq:BaseChange}).
\end{prop}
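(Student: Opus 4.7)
The plan is to reduce to the classical Riemann--Hilbert correspondence over $X\setminus D$ and then analyse the local extension problem at each component $D_j$ of the polar divisor.

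First I would check that the forward map is well-defined. Given $(E,\nabla)$ and a horizontal basis $\mathcal B$ near $x_0$, the monodromy $\rho=\rho_{\nabla,\mathcal B}$ is determined up to $\mathrm{SL}(2,\C)$-conjugacy in the usual way. For each $j\in J^{res}$, the local normal form at a smooth point of $D_j$ exhibits a one-dimensional space of holomorphic (hence bounded) solutions; analytic continuation along $\delta_j^{-1}$ transports this space into $\C^2$ at $x_0$, yielding $L_j\in\P^1$. Since $\gamma_j$ is homotopic in $X\setminus D$ to a small loop around $D_j$ conjugated by $\delta_j$, and since the local monodromy preserves the space of bounded solutions, $L_j$ is $\rho(\gamma_j)$-invariant. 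A change of basis $\mathcal B'=M\mathcal B$ acts on both $\rho$ and the $L_j$ exactly as in (\ref{Eq:BaseChange}).

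Next, I would prove injectivity. If $(E,\nabla)$ and $(E',\nabla')$ produce the same data up to the $\mathrm{SL}(2,\C)$-action, classical Riemann--Hilbert on the non-compact manifold $X\setminus D$ furnishes a holomorphic isomorphism $\varphi$ of $(E,\nabla)$ and $(E',\nabla')$ there. The task is to extend $\varphi$ across each smooth point of each $D_j$; once this is done, the normal-crossings hypothesis together with Hartogs' theorem provides the extension across higher-codimensional strata. For a non-resonant $D_j$, Deligne's theorem guarantees that both $(E,\nabla)$ and $(E',\nabla')$ admit a unique canonical extension across $D_j$ with exponents $\pm\theta_j/2$ (no eigenvalue difference in $\Z_{>0}$), and $\varphi$ extends because it intertwines residues. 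For a resonant $D_j$, the local model is either apparent or logarithmic (the two cases of (\ref{PoincareDulac})); the coincidence of $L_j$ between the two sides forces $\varphi$ to match the one-dimensional space of bounded solutions on each side, and a standard local gauge computation shows that this is precisely what is needed for $\varphi$ to extend holomorphically across $D_j$.

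For surjectivity, I would build $(E,\nabla)$ from the data. Deligne's construction produces a flat bundle on $X\setminus D$ realising $\rho$; I then extend across each $D_j$ locally. In the non-resonant case, the Deligne canonical extension with prescribed exponents $\pm\theta_j/2$ solves the problem uniquely. In the resonant case $\theta_j\in\Z\setminus\{0\}$, I use $L_j$ to select among the possible extensions: in a local coordinate on $X$ adapted to $D_j$ and a trivialisation putting the local monodromy into Jordan form, the choice of apparent versus logarithmic model (\ref{PoincareDulac}) together with the gauge freedom is parametrised by an invariant line in $\C^2$; transporting $L_j$ from $x_0$ to the smooth point of $D_j$ along $\delta_j$ and declaring it to be the space of bounded solutions uniquely picks the local extension. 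Normal-crossings guarantees these independent choices along different components patch coherently. I expect \emph{the resonant local analysis} to be the main obstacle: one must verify carefully that the orbit of local logarithmic extensions under holomorphic gauge equivalence is faithfully parametrised by the $\rho(\gamma_j)$-invariant line $L_j\in\P^1$, and that this parametrisation is compatible with the transport data $\delta_j$ used to define $L_j$ globally.
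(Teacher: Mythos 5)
Your proposal is correct and follows exactly the route the paper has in mind: the classical Riemann--Hilbert correspondence on $X\setminus D$, Deligne's unique canonical extension with prescribed exponents $\pm\theta_j/2$ at non-resonant components, and the observation that at a resonant component the possible logarithmic extensions are faithfully parametrised by the $\rho(\gamma_j)$-invariant line of bounded solutions (a full $\P^1$ in the apparent case, a single point in the logarithmic one). The paper itself gives no formal proof of this proposition -- it relies on the discussion and normal forms preceding it together with the references to Deligne and Loray--Pereira -- so your write-up supplies precisely the details the paper leaves implicit.
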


\subsection{Reducible $\mathrm{gl}(2,\C)$-connections}
A sub line bundle $L\subset E$ is said $\nabla$-invariant
when it is generated by $\nabla$-horizontal sections.
In this case, the connection $\nabla$ induces a meromorphic
connection $\nabla\vert_L$ on $L$. 
The connection $(E,\nabla)$ is said {\bf reducible}
when it admits such an invariant line bundle, 
and {\bf irreducible} if not. When the connection is
reducible, then the monodromy representation is
itself reducible: the monodromy group has a common
eigenvector. In the logarithmic case with normal crossing
polar divisor, the converse is true: $(E,\nabla)$ is reducible
if, and only if, $\rho$ is. 

\subsection{Projective $\mathrm{sl}(2,\C)$-connections and Riccati foliation}\label{sec:ProjConRiccati}
A rank $2$ meromorphic connection $(E,\nabla)$ 
induces a projective $\mathrm{sl}(2,\C)$-connection
$(\P(E),\P(\nabla))$ on $X$.
If the linear connection is given in the trivializing chart $Y_i$ by
$$Y_i\mapsto dY_i-\Omega_iY_i,\ \ \ \Omega_i=\begin{pmatrix}\alpha_i& \beta_i\\
\gamma_i& \delta_i\end{pmatrix},$$
where $\alpha_i$, $\beta_i$, $\gamma_i$ and $\delta_i$ are meromorphic $1$-forms on $U_i$,
then the projective connection $\P(\nabla)$ is defined 
in the projective trivializing chart 
$\P(Y_i)=(1:z_i)\in\P^1$ by 
$$z_i\mapsto dz_i+\beta_i z_i^2+(\alpha_i-\delta_i)z_i-\gamma_i.$$
Another linear connection $(E',\nabla')$ will define the same
projective connection if and only if it is the twist of
$(E,\nabla)$ by some rank $1$ meromorphic connection $(L,\zeta)$.

Conversely, when $\mathrm{H}^2(X,\mathcal O^*)=0$ 
(as it so happens when $X$ is a curve), 
any $\P^1$-bundle $P$ is 
the projectivization of a rank $2$ vector bundle $P=\P(E)$. 
Moreover, one immediately deduce from the formula above that
given any meromorphic projective $\mathrm{sl}(2,\C)$-connection on $P$ and any meromorphic (linear) connection
$\zeta$ on the line bundle $L=\det(E)$, there is a unique
meromorphic linear connection $\nabla$ on $E$ lifting
the projective one on $\P(E)$ with prescribed
trace $\tr(\nabla)=\zeta$ on $\det(E)$. 

When $X$ is a curve,
there are two topological types of $\P^1$-bundles~:
the topological type of $\P(E)$ is given by the class 
of $\deg(E)\in\Z/2\Z$. We note that
topological triviality is the condition for the existence of a square root
$L$ of $\det(E)\in\mathrm{Pic}(X)$.
In other words, a $\P^1$-bundle $P$ is topologically trivial
if, and only if, $P$ can be lifted as an $\mathrm{SL}(2,\C)$-vector bundle : setting $E:=E\otimes L^{\otimes(-2)}$, we get
$P=\P(E)$ with $\det(E)=\mathcal O$. 
The $\mathrm{SL}(2,\C)$-lifting depends on the choice 
of a square root : it is well defined up to order two points in 
$\mathrm{Pic}(X)$ and there are $2^{2g}$ possible liftings
over $X$ of genus $g$. Finally, any meromorphic projective
connection on $P$ lifts uniquely as a linear $\mathrm{sl}(2,\C)$-connection $\nabla$ on $E$ (with the same polar divisor).

When $X$ is a curve, the total space of $\P(E)$ 
is a ruled surface $S\to X$ and the Riccati equation 
$\P(\nabla)=0$ defines a singular foliation $\mathcal F$ 
on $S$ whose leaves are the graphs of horizontal sections
of the projective connection. The pair $(S,\mathcal F)$ is called
a {\bf Riccati foliation} (see \cite{Brunella}).  The foliation is
regular, transversal to the ruling outside the polar locus
of $\nabla$. Over the poles of the projective connection,
the $\P^1$ fibre is the disjoint union of a vertical leaf
of $\mathcal F$ and $1$ or $2$ singular points. 
Precisely, when $\nabla$ is logarithmic (simple poles),
singular points correspond to the eigenlines
of the linear connection $\nabla$. If $\theta$ and $\theta'$
are the eigenvalues of $\nabla$ at some pole $x\in X$,
denote by $l$ and $l'$ the corresponding eigenlines;
the exponent (or Camacho-Sad index of the vertical leaf) at the singular point $l$ of the Riccati foliation is 
$\kappa=\theta'-\theta$. Let $\sigma:X\to S$ be a section.

\begin{prop}\label{prop:CamachoSad}
Let $X$ be a curve, $(S,\mathcal F)$ a Riccati foliation
with simple poles over $X$, and $\sigma$ a $\mathcal F$-invariant
section. Then 
$$\sigma\cdot\sigma=\sum_{i}\kappa_i$$
where $\kappa_i$ are the exponents
of the singular points $\sigma$ passes through 
where $i$ runs over the invariant fibres of $\mathcal F$.
\end{prop}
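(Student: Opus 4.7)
The statement is the classical Camacho--Sad index theorem specialised to the invariant section $\sigma$ inside the ruled surface $S$. My plan is to (i) locate the singularities of $\mathcal{F}$ that lie on $\sigma$, (ii) invoke the Camacho--Sad formula, and (iii) evaluate each local index from the normal form of the Riccati foliation at a simple pole.

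For (i), a Riccati foliation with simple poles is regular and transversal to the ruling outside the invariant (polar) fibres, so $\mathrm{Sing}(\mathcal{F})$ is contained in the union of these invariant fibres $F_i$. Since $\sigma$ is a section, it meets each fibre in a single point. If $\sigma\cap F_i$ were a regular point of $\mathcal{F}$, then $\sigma$ and $F_i$ would give two distinct smooth invariant germs through a regular point, contradicting local uniqueness of the leaf. Hence $\sigma$ passes through exactly one singular point $s_i$ of $\mathcal{F}$ in each invariant fibre, and through no other singularity of $\mathcal{F}$.

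For (ii), I apply the Camacho--Sad theorem for a compact smooth invariant curve of a holomorphic foliation on a complex surface, which gives
$$\sigma\cdot\sigma=\sum_{i}\mathrm{CS}_{s_i}(\mathcal{F},\sigma).$$
For (iii), near a pole with nonresonant exponent the matrix connection reduces to $\mathrm{diag}(\theta/2,-\theta/2)\,\frac{dx}{x}$, and the projectivised Riccati foliation is locally given by a vector field of the form $x\partial_x+\kappa_i z\partial_z$ in coordinates with $F_i=\{x=0\}$ and $\sigma=\{z=0\}$, where $\kappa_i$ is the exponent of $\mathcal{F}$ at $s_i$. The standard formula $\mathrm{CS}_0(\lambda x\partial_x+\mu y\partial_y,\{y=0\})=\mu/\lambda$ then yields $\mathrm{CS}_{s_i}(\mathcal{F},\sigma)=\kappa_i$, and summing over $i$ gives the asserted identity.

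There is no real obstacle beyond bookkeeping, but two points deserve care: verifying that the sign convention matches the paper's definition of exponent (the footnote in Section~\ref{sec:ProjStrucRiccati} on the reciprocal index along the fibre provides a cross-check), and handling the resonant/nilpotent case where the residual matrix is only triangularisable. In the latter case the local model becomes $x\partial_x+(\kappa_i z+x^{\kappa_i})\partial_z$-type but the Camacho--Sad index is computed either by a direct residue calculation with a defining $1$-form $A\,dx+B\,dz$ (namely $\mathrm{CS}_{s_i}=-\mathrm{Res}_{s_i}\bigl(\tilde A/B\bigr)|_{z=0}\,dx$ where $A=z\tilde A$) or by deformation invariance of the index, giving the same value $\kappa_i$.
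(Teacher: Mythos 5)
Your proof is correct, but it takes a genuinely different route from the paper's. You invoke the Camacho--Sad index theorem for the compact invariant curve $\sigma$ and evaluate the local indices from the normal forms of the Riccati foliation at each pole; the paper instead stays entirely inside the linear-connection formalism: it chooses the unique lifting $(E,\nabla)$ of the projective connection for which the line bundle $L\subset E$ corresponding to $\sigma$ is trivial and carries the trivial induced connection, so that the eigenvalues over the pole $i$ are $0$ and $\kappa_i$, and then concludes by the Fuchs residue relation $\deg(E)=\sum_i\kappa_i$ combined with $\sigma\cdot\sigma=\deg(E)-2\deg(L)=\deg(E)$. (The paper explicitly remarks that the statement is a special case of Camacho--Sad as in Brunella, which is precisely your route, but deliberately gives the more elementary argument.) What each approach buys: the paper's proof is self-contained, needs no local normal-form analysis, and in particular sidesteps the resonant/nilpotent cases entirely; yours makes the link with the general index theorem transparent and would extend verbatim to invariant curves that are not sections, at the cost of a case-by-case local computation. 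One small slip in your resonant discussion: the model $x\partial_x+(\kappa_i z+x^{\kappa_i})\partial_z$ does \emph{not} leave $\{z=0\}$ invariant, so it is not the local form seen by an invariant section; if $\sigma=\{z=0\}$ is invariant, the $\partial_z$-component is necessarily divisible by $z$, and then your residue formula $\mathrm{CS}=-\mathrm{Res}\,(\tilde A/B)\vert_{z=0}\,dx$ with $A=z\tilde A$ applies directly and still yields $\kappa_i$, so the conclusion is unaffected.
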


This is a particular case of Camacho-Sad
formula (see \cite{Brunella}, page 37).

\begin{proof}Viewed as a projective connection, there exists 
a unique lifting $(E,\nabla)$ of the projective connection
such that the $\nabla$-invariant line bundle $L$ corresponding to 
$\sigma$ is the trivial bundle, and the connection induced by
$\nabla$ on $L$ is the trivial connection: the eigenvalues of 
$\nabla$ over the pole $i$ are given by $0$ and $\kappa_i$.
Then, Fuchs relations give
$$\deg(E)=\sum_{i}\kappa_i$$
and we have
$$\sigma\cdot\sigma=\deg(E)-2\deg(L)=\deg(E).$$
\end{proof}

\begin{prop}\label{prop:BrunellaSection}
Let $X$ be a curve of genus $g$, $(S,\mathcal F)$ a Riccati foliation
over $X$ with $n$ poles (counted with multiplicity), 
and $\sigma:X\to S$ a section which is not $\mathcal F$-invariant. Then the number of tangencies
between $\sigma$ and the foliation (including singular points lying
on $\sigma$) is given by
$$\mathrm{tang}(\mathcal F,\sigma)=2g-2+n+\sigma\cdot\sigma$$
(counted with multiplicities).
\end{prop}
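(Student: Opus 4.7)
The plan is to apply the standard tangency formula for a foliation on a smooth surface, namely
$$\mathrm{tang}(\mathcal F,C) \;=\; C\cdot C - T_\mathcal F\cdot C$$
for any smooth curve $C\subset S$ not invariant by $\mathcal F$ (see \cite{Brunella}), and to compute the intersection number $T_\mathcal F\cdot\sigma$ from the Riccati structure. The formula above follows from restricting the composition $TC\hookrightarrow TS|_C\twoheadrightarrow (TS/T_\mathcal F)|_C$, a nonzero map of line bundles, and observing that its zero divisor is exactly the tangency locus: the contribution at a point where $\sigma$ meets $\mathrm{Sing}(\mathcal F)$ is also correctly captured, as the torsion of $TS/T_\mathcal F$ at such a point adjusts the local order of vanishing to match the Camacho-Sad type multiplicity of Proposition \ref{prop:CamachoSad}. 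Accepting this formula reduces everything to a computation of $T_\mathcal F$ as a sub-line-bundle of $TS$.

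The heart of the argument is the following identification. Since $\mathcal F$ is generically transverse to the ruling $\pi:S\to X$, the natural composition $T_\mathcal F\hookrightarrow TS\twoheadrightarrow\pi^*TX$ is a nonzero morphism of line bundles, an isomorphism over non-polar points of $\pi$. Over a simple pole $x_i$, however, the foliation is tangent to the invariant fiber $F_{x_i}=\pi^{-1}(x_i)$, so this map vanishes along $F_{x_i}$; a local inspection in a Riccati model $dy = f\,dx$ with $f$ having a simple pole at $x=x_i$ shows that a holomorphic generator of $T_\mathcal F$ near $F_{x_i}$ is $(x-x_i)\partial_x+(x-x_i)f\,\partial_y$, whose horizontal part vanishes to order exactly one along $F_{x_i}$. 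Consequently
$$T_\mathcal F \;\cong\; \pi^*TX \otimes \mathcal O_S\Bigl(-\sum_{i=1}^n F_{x_i}\Bigr).$$
Since $\sigma$ meets each polar fiber $F_{x_i}$ once, and $\pi^*TX\cdot\sigma = \deg TX = 2-2g$, this gives $T_\mathcal F\cdot\sigma = (2-2g)-n$.

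Substituting into the tangency formula yields the desired equality
$$\mathrm{tang}(\mathcal F,\sigma) \;=\; \sigma\cdot\sigma - (2-2g-n) \;=\; 2g-2+n+\sigma\cdot\sigma.$$
The main subtlety, as noted above, is verifying the bookkeeping of tangencies at singular points of $\mathcal F$ lying on $\sigma$: one has to check that the local order of vanishing of $T\sigma\to (TS/T_\mathcal F)|_\sigma$ at such a point coincides with the conventional multiplicity of the tangency on the left-hand side. This can be handled either by the torsion analysis of $TS/T_\mathcal F$ or, more cleanly, by rephrasing the computation in terms of a twisted 1-form $\omega\in H^0(S,\Omega_S^1\otimes N_\mathcal F)$ defining $\mathcal F$ (where $N_\mathcal F$ is the normal line bundle with $c_1(T_\mathcal F)+c_1(N_\mathcal F)=-K_S$): the pullback $\sigma^*\omega\in H^0(X,\Omega_X^1\otimes\sigma^*N_\mathcal F)$ has zero divisor of degree $(2g-2)+N_\mathcal F\cdot\sigma$, and using $c_1(T_\mathcal F)+c_1(N_\mathcal F)=-K_S$ together with adjunction $K_S\cdot\sigma+\sigma\cdot\sigma=2g-2$ yields $N_\mathcal F\cdot\sigma = \sigma\cdot\sigma + n$, giving the same answer.
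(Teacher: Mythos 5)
Your proof is correct, but it takes a genuinely different route from the paper's. The paper disposes of the proposition in two lines by reducing it to Lemma \ref{lem:Brunella}: choose any linear lift $(E,\nabla)$ of the projective connection, let $L\subset E$ be the line bundle corresponding to $\sigma$, and observe that the $\mathcal O_X$-linear composition $L\hookrightarrow E\stackrel{\nabla}{\longrightarrow}E\otimes\Omega(D)\twoheadrightarrow (E/L)\otimes\Omega(D)$ is a nonzero section of $\det(E)\otimes\Omega(D)\otimes L^{-2}$ whose zero divisor is precisely the tangency locus; its degree is $\deg(E)-2\deg(L)+\deg(D)+2g-2=\sigma\cdot\sigma+n+2g-2$. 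You instead work entirely on the ruled surface, quoting the general tangency formula $\mathrm{tang}(\mathcal F,C)=C\cdot C-T_{\mathcal F}\cdot C$ and computing $T_{\mathcal F}\cong\pi^*TX\otimes\mathcal O_S(-\sum_i F_{x_i})$ from the Riccati structure. The two arguments are cousins: your closing reformulation via $\sigma^*\omega\in H^0(X,\Omega^1_X\otimes\sigma^*N_{\mathcal F})$ is literally the paper's section $\phi$ under the identification $\sigma^*N_{\mathcal F}\simeq N_\sigma\otimes\mathcal O_X(D)$. What the paper's route buys is self-containedness (the lemma is already proved, and the multiplicity bookkeeping at eigendirections is handled there once and for all) and consonance with the rest of the text, where the linear connection is the primary object; what yours buys is generality --- it is Brunella's statement for an arbitrary non-invariant curve, not just a section, and it needs no choice of linear lift. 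Two small points to tidy: the formula $C\cdot C-T_{\mathcal F}\cdot C$ is classically obtained from the map $T_{\mathcal F}|_C\to N_C$, whereas the composition $TC\to (TS/T_{\mathcal F})|_C$ that you invoke gives $N_{\mathcal F}\cdot C+2g-2$ (equal to the former by adjunction and $c_1(T_{\mathcal F})+c_1(N_{\mathcal F})=-K_S$, as you note at the end); and your local computation of $T_{\mathcal F}$ only treats simple poles, so for the statement as written, with $n$ poles counted with multiplicity, you should add that a pole of order $k$ contributes $kF_{x_i}$ to the twisting divisor.
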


This is a particular case of Proposition 2, page 37 in \cite{Brunella}.

\begin{proof}Choose any lifting $(E,\nabla)$ of the projective connection and apply Lemma \ref{lem:Brunella} to the line bundle
$L\subset E$ corresponding to $\sigma$.
\end{proof}

\subsection{Stability of bundle and connections}
A rank $2$ vector bundle over a curve $X$ is said
stable (resp. semistable) when we have
\begin{equation}
\det(E)-2\det(L)>0\ \ \ (\text{resp.}\ \ge0)
\end{equation}
for all sub line bundle $L\subset E$. 
This notion is invariant by projective equivalence:
the $\P^1$-bundle $\P(E)$
is stable (resp. semistable) if
$$\sigma\cdot\sigma>0\ \ \ (\text{resp.}\ \ge0)$$
for all section $\sigma:X\to \P(E)$.

Similarly, we say that a connection $(E,\nabla)$ is stable (resp. semistable)
when 
\begin{equation}
\det(E)-2\det(L)>0\ \ \ (\text{resp.}\ \ge0)
\end{equation}
for all $\nabla$-invariant sub line bundle $L\subset E$.
Again, this notion is invariant by projective equivalence:
the projective connection $(\P(E),\P(\nabla))$
is stable (resp. semistable) if 
$$\sigma\cdot\sigma>0\ \ \ (\text{resp.}\ \ge0)$$
for all $\P(\nabla)$-invariant section $\sigma$.
In particular, an irreducible connection $(E,\nabla)$
is stable even if the bundle $E$ is unstable. However, 
for a semistable connection, the bundle $E$ cannot be arbitrarily
unstable: by Lemma \ref{lem:Brunella}, the stability index
is bounded by
$$\det(E)-2\det(L)\ge 2-2g-\deg(D)$$
where $D$ is the polar divisor of $\nabla$.

\subsection{Meromorphic and elementary gauge tranformations}\label{ss:elm}

We start recalling what is an {\bf elementary transformation}
of a rank $2$ vector bundle, say $E$, over a curve $X$.
Given a point $p\in X$ and a linear subspace $l\in\P(E_p)$
in the fibre over $p$, one usually defines two birational bundle transformations
$$\mathrm{elm}_{p,l}^+:E\dashrightarrow E^+\ \ \ \text{and}\ \ \ 
\mathrm{elm}_{p,l}^-:E\dashrightarrow E^-,$$
that are unique up to post-composition by a bundle isomorphism.
In restriction to the punctured curve $X^*=X\setminus\{p\}$, 
both $\mathrm{elm}^\pm_{p,l}$ induce isomorphisms.
At the neighborhood of $p$, they can be described as follows.
Choose a local coordinate $x:U\to\C$ at $p$ 
together with a trivialization of $Y:E\vert_U\to \C^2$
for which the linear subspace $l$ is spanned by $Y=\begin{pmatrix}0\\ 1\end{pmatrix}$.
This, in particular, induces a trivialization of $E\vert_{X^*}$
on $U^*=U\setminus\{p\}$. Elementary transformations 
$\mathrm{elm}^\pm_{p,l}$ can be
defined by the following commutative diagram
$$ \xymatrix{
E\ar@{-->}[d]_{\mathrm{elm}^\pm_{p,l}}
& E\vert_{X^*}\ar@{=}[d]^{\mathrm{id}} 
& E\vert_{U^*}\ar@{_{(}->}[l]\ar@{=}[d]^{\mathrm{id}}\ar[r]^-Y_-\sim 
& U^*\times\C^2\ar@{=}[d]^{\mathrm{id}}\ar@{^{(}->}[r]^{\mathrm{id}}
& U\times\C^2\ar@{-->}[d]^{\phi^{\pm}} \\
E^{\pm} & E\vert_{X^*} 
& E\vert_{U^*}\ar@{_{(}->}[l]\ar[r]^-Y_-\sim  
& U^*\times\C^2\ar@{^{(}->}[r]^{\phi^{\pm}}
& U\times\C^2
}$$
where 
$$\phi^+(Y)=\begin{pmatrix}1&0\\0&x\end{pmatrix}Y\ \ \ \text{and}\ \ \ 
\phi^-(Y)=\begin{pmatrix}1/x&0\\0&1\end{pmatrix}Y.$$
All three bundles $E$ and $E^\pm$ are constructed by gluing the local trivial
bundle $U\times\C^2$ to the same restricted bundle 
$E\vert_{X^*}$
through different bundle isomorphisms (either the identity, or $\phi^\pm$) over the punctured neighborhood $U^*$. Isomorphisms 
$E\vert_{X^*}\to E^\pm\vert_{X^*}$
given by this construction extend as birational bundle transformations.
We have
$$\det(E^\pm)=\det(E)\otimes\mathcal O(\pm[p]).$$
On the other hand, $\mathrm{elm}^\pm_{p,l}$ induce 
the same birational $\P^1$-bundle transformation
$$\mathrm{elm}_{p,l}:P=\P(E)\dashrightarrow P'$$
since $\phi^+$ and $\phi^-$ coincide both in $\mathrm{PGL}(2,\mathcal O(U^*))$ and $\mathrm{PGL}(2,\mathcal M(U))$. 

One still has to verify that our construction only depends
on the ``parabolic structure'' $(p,l)$, not on the choice
of the local trivialization $Y$. For another choice
$$\tilde Y=M\cdot Y,\ \ \ M\in\mathrm{GL}(2,\mathcal O(U)),$$
one has to check that
$\phi^+(\tilde Y)=\phi^+(M\cdot Y)=\tilde M\cdot \phi^+(Y)$
with $\tilde M\in\mathrm{GL}(2,\mathcal O(U))$.
Indeed, if $M=\begin{pmatrix}a&b\\ c&d\end{pmatrix}$, 
then $\tilde M=\begin{pmatrix}a&b/x\\ xc&d\end{pmatrix}$;
since $l$ has to be spanned by 
$\tilde Y=\begin{pmatrix}0\\ 1\end{pmatrix}$,
we have $b(0)=0$ and $\tilde M$ is holomorphic
with $\det(\tilde M)=\det(M)\not=0$.

A similar computation shows that the line $l^\pm\subset E^\pm_p$, 
defined by $\phi^\pm=\begin{pmatrix}1 \\ 0\end{pmatrix}$ in the
construction above, does not depend on our choices.
In other word, given a bundle $E$ equipped with a parabolic
structure over $p$, $l\subset E_p$, elementary transformations
define a birational tranformation
$$\mathrm{elm}_p^\pm:(E,l)\dashrightarrow(E^\pm,l^\pm)$$
between parabolic bundles which is well defined up to 
left-and-right composition by parabolic bundle isomorphisms.
It also follows from computations above that 
$$\mathrm{elm}_p^\pm\circ\mathrm{elm}_p^\mp:(E,l)\to(E',l')$$ 
are parabolic bundle isomorphisms. In this sense, 
$\mathrm{elm}_p^+$ and $\mathrm{elm}_p^-$ 
are inverse to each other. We can 
also consider a general rank $2$ parabolic bundle
$(E,l)$ over $(X,S)$ where $S\subset X$ is a finite subset,
and $l:S\to\P(E\vert_S)$ a section of the projective 
bundle induced over $S$. The elementary transformations
$\mathrm{elm}_p^\pm:(E,l)\dashrightarrow(E^\pm,l^\pm)$
are defined between parabolic bundles over $(X,S)$ like above 
when $p\in S$ (note that $\mathrm{elm}_{p,l(p)}^\pm$
induces an isomorphism of parabolic bundles over $(X^*,S^*)$)
and as the identity when $p\not\in S$. Finally, if $p_1,p_2\in S$
are two distinct points, elementary transformations 
$\mathrm{elm}_{p_1}^\pm$ and $\mathrm{elm}_{p_2}^\pm$
commute (up to parabolic bundle isomorphisms) so that one
can define $\mathrm{elm}_{S'}^\pm$ for any subset $S'\subset S$.

Now, we would like to describe how elementary transformations
act on parabolic connections $(E,\nabla,l)$: $(E,l)$ is a parabolic
bundle over $(X,S)$ like above and $\nabla$ a meromorphic connection on $E$. Let $p\in S$ and denote by $\nabla^\pm$
the push-forward of $\nabla$ by the elementary transformation
$\mathrm{elm}_p^\pm:(E,l)\dashrightarrow(E^\pm,l^\pm)$:
$\nabla^\pm$ is a meromorphic connection on $E^\pm$.
Under notations above, if $\nabla$ is defined in coordinates $(x,Y)$
by 
$$Y\mapsto dY-\Omega Y,\ \ \ \Omega=\begin{pmatrix}\alpha&\beta\\ \gamma&\delta\end{pmatrix},$$
then $\nabla^\pm$ is defined in the $E^\pm$ local trivialization 
$Y^\pm=\phi^\pm(Y)$ by $Y^\pm\mapsto dY^\pm-\Omega^\pm Y\pm$
where 
$$\Omega^+=\begin{pmatrix}\alpha&\frac{\beta}{x}\\ x\gamma&\delta+\frac{dx}{x}\end{pmatrix}\ \ \ \text{and}\ \ \ 
\Omega^-=\begin{pmatrix}\alpha-\frac{dx}{x}&\frac{\beta}{x}\\ x\gamma&\delta\end{pmatrix}.$$
If $p$ is not a pole of $\nabla$, then $\nabla^\pm$ has a logarithmic
pole at $p$. If $p$ is a pole of order $k$ of $\nabla$, there are two cases:
\begin{itemize}
\item if $l(p)$ is an eigenvector of $\nabla$ at $p$ 
(i.e. of $x^k\Omega$ at $x=0$), then $\nabla^\pm$ 
has a pole of order $k$ or $k-1$ at $p$;
\item if not, then $\nabla^\pm$ 
has a pole of order $k+1$ at $p$.
\end{itemize}
In any case, $l^\pm(p)$ is an eigenvector of $\nabla^\pm$ at $p$.

When $\nabla$ is a logarithmic connection, then $\nabla^\pm$
is also logarithmic if, and only if, either $p$ is regular, or $p$ is a pole
and $l(p)$, an eigenline of $\nabla$. One
can then choose the coordinate $Y$ such that $\nabla$ is defined
by $Y\mapsto dY-AY\frac{dx}{x}$ with
$$\Omega=\begin{pmatrix}\theta_1\frac{dx}{x}&0\\0&\theta_2\frac{dx}{x}\end{pmatrix},\ \ \ 
\begin{pmatrix}(\theta+n)\frac{dx}{x}&x^n\frac{dx}{x}\\ 0&\theta\frac{dx}{x}\end{pmatrix}\ \ \ \text{or}\ \ \ 
\begin{pmatrix}\theta\frac{dx}{x}&0\\ x^n\frac{dx}{x}&(\theta+n)\frac{dx}{x}\end{pmatrix}$$
$$A=\begin{pmatrix}\theta_1&0\\0&\theta_2\end{pmatrix},\ \ \ 
\begin{pmatrix}(\theta+n)&x^n\\ 0&\theta\end{pmatrix}\ \ \ \text{or}\ \ \ 
\begin{pmatrix}\theta&0\\ x^n&(\theta+n)\end{pmatrix}.$$
(including the regular case $A=0$) with the restriction $n>0$ in the middle case. Then $\nabla^\pm$
is given in coordinate $Y^\pm=\phi^\pm(Y)$ by $Y^\pm\mapsto dY^\pm-A^\pm Y^\pm\frac{dx}{x}$ with respectively
$$A^+=\begin{pmatrix}\theta_1&0\\0&\theta_2+1\end{pmatrix},\ \ \ 
\begin{pmatrix}(\theta+n)&x^{n-1}\\ 0&\theta+1\end{pmatrix}\ \ \ \text{or}\ \ \ 
\begin{pmatrix}\theta&0\\ x^{n+1}&(\theta+n+1)\end{pmatrix}$$
and
$$A^-=\begin{pmatrix}\theta_1-1&0\\0&\theta_2\end{pmatrix},\ \ \ 
\begin{pmatrix}(\theta+n-1)&x^{n-1}\\ 0&\theta\end{pmatrix}\ \ \ \text{or}\ \ \ 
\begin{pmatrix}\theta-1&0\\ x^{n+1}&(\theta+n)\end{pmatrix}.$$
We resume:
\begin{itemize}
\item if $p$ is a regular point of $\nabla$, i.e. $A=0$, 
then $\nabla^\pm$ is logarithmic with eigenvalues $0$ and $\pm1$;
\item if $p$ is a pole and $\theta$ an eigenvalue of $\nabla$ at $p$,
there exists one and only one eigenline $l(p)$ associate to 
$\theta$ except in the diagonal case when $\theta_1=\theta_2=\theta$;
when $\nabla$ is trace free, this later case does not occur 
and (the eigenspace of) each eigenvalue defines a parabolic structure
over $p$;
\item if $\{\theta_1,\theta_2\}$ denote the eigenvalues and if
$l(p)$ is the eigenline associate to $\theta_2$,
then $\nabla^+$ (resp. $\nabla^-$) has eigenvalues 
$\{\theta_1,\theta_2+1\}$ (resp. $\{\theta_1-1,\theta_2\}$);
$\nabla^\pm$ are of diagonal type if, and only if, $\nabla$ is;
the parabolic structure of $(E^\pm,\nabla^\pm,l^\pm)$ over $p$ corresponds to the eigenvalue $\theta_1$.
\end{itemize}
The trace of the connection is changed by
$$\tr(\nabla^\pm)=\tr(\nabla)\otimes\zeta^\pm$$
where $\zeta$ is the unique logarithmic connection 
on $\mathcal O_X(\pm[p])$ having a single pole at $p$
with residue $\pm1$ and trivial monodromy.
Indeed, the monodromy does not change by a birational
bundle transformation.

\end{document}